\newtheorem{theorem}{Theorem}
\newtheorem{lemma}[theorem]{Lemma}
\newtheorem{corollary}[theorem]{Corollary}
\newtheorem{proposition}[theorem]{Proposition}
\newtheorem{example}[theorem]{Example}
\newtheorem{problem}[theorem]{Problem}
\newcommand{\tto}{\twoheadrightarrow}
\begin{document}
\title[Selfadjoint functors]{On selfadjoint functors satisfying polynomial relations}
\author{Troels Agerholm and Volodymyr Mazorchuk}
\date{\today}

\maketitle

\begin{abstract}
We study selfadjoint functors acting on categories of finite 
dimensional modules over finite dimensional algebras with an
emphasis on functors satisfying some polynomial relations. 
Selfadjoint functors satisfying several easy relations, in 
particular, idempotents and square roots of a sum of
identity functors, are classified. We also describe various 
natural constructions for new actions using external direct 
sums, external tensor products, Serre subcategories,
quotients and centralizer subalgebras.
\end{abstract}

\section{Introduction}\label{s1}

The main motivation for the present paper stems from the recent
activities on categorification of representations of various algebras, 
see, in particular, \cite{CR,FKS,HS,HK,KMS1,MS1,MS2,MS3,R},
the review \cite{KMS2} and references therein. In these articles one could find
several results of the following kind: given a field $\Bbbk$, 
an associative $\Bbbk$-algebra $\Lambda$ with a fixed generating set 
$\{a_i\}$, and a $\Lambda$-module $M$, one constructs a 
{\em categorification} of $M$, that is an abelian category $\mathcal{C}$ 
and exact endofunctors $\{\mathrm{F}_i\}$ of $\mathcal{C}$ such that the
following holds: The Grothendieck group $[\mathcal{C}]$ of
$\mathcal{C}$ (with scalars extended to an appropriate field)  
is isomorphic to $M$ as a vector space and the functor $\mathrm{F}_i$ induces
on $[\mathcal{C}]$ the action of $a_i$ on $M$. Typical examples of 
algebras, for which categorifications of certain modules
are constructed, include group algebras of Weyl groups, Hecke algebras,
Schur algebras and enveloping algebras of some Lie algebras.
There are special reasons why such algebras and modules are of importance, 
for example, because of applications to link invariants (see \cite{St})
or Brou{\'e}'s abelian defect group conjecture (see \cite{CR}). 
Introducing some extra conditions one could even establish some 
uniqueness results, see \cite{CR,R}.

In this paper we would like to look at this problem from a different
perspective. The natural question, which motivates us, is the following: 
Given $\Lambda$ and $\{a_i\}$ can one classify {\em all} possible 
categorifications of {\em all} $\Lambda$-modules up to some 
natural equivalence? Of course in the full generality the problem is 
hopeless, as even the problem of classifying all $\Lambda$-modules 
seems hopeless for wild algebras. So, to start with, in this paper we 
make the main emphasis on the most basic example, that is the case
when the algebra $\Lambda$ is generated by one element, say $a$. 
If $\Lambda$ is finite-dimensional, then we necessarily have 
$f(a)=0$ for some nonzero $f(x)\in\Bbbk[x]$. To make the
classification problem more concrete, it is natural to look for 
a finite-dimensional $\Bbbk$-algebra $A$ and an exact endofunctor 
$\mathrm{F}$ of $A\text{-}\mathrm{mod}$, which should satisfy some 
sensible analogue  of the relation $f(\mathrm{F})=0$. 
Assume that all coefficients of $f(x)$ are
integral and rewrite $f(a)=0$ as $g(a)=h(a)$,
where both $g$ and $h$ have nonnegative coefficients. Setting
\begin{equation}\label{eq0}
\mathtt{k}\mathrm{F}:=
\begin{cases}
\underbrace{\mathrm{F}\oplus \mathrm{F}\oplus \cdots\oplus 
\mathrm{F}}_{\mathtt{k}
\text{ times}}, & \mathtt{k}\in \{1,2,\dots\};\\ 
\mathbf{0}, & \mathtt{k}=0;
\end{cases}
\end{equation}
and interpreting $+$ as $\oplus$, it makes sense to require 
$g(\mathrm{F})\cong h(\mathrm{F})$ for our functor $\mathrm{F}$.

To simplify our problem further we make another observation about the
examples of categorification available from the literature mentioned
above. All algebras appearing in this literature are equipped with
an involution, which in the categorification picture is interpreted
as ``taking the adjoint functor'' (both left- and right-adjoint). We again
take the simplest case of the trivial involution, and can now formulate
our main problem as follows:

\begin{problem}\label{prob1}
Given a finite dimensional $\Bbbk$-algebra $A$ and two polynomials 
$g(x)$ and $h(x)$ with nonnegative integral coefficients, classify, 
up to isomorphisms, all selfadjoint endofunctors $\mathrm{F}$ on
$A\text{-}\mathrm{mod}$ which satisfy $g(\mathrm{F})\cong h(\mathrm{F})$.
\end{problem}

Using Morita equivalence, in what follows we may assume that $A$ is basic
(i.e. has one dimensional simple modules). In this paper we obtain an answer 
to Problem~\ref{prob1} for relations $x^2=x$  (Section~\ref{s4}), 
$x^2=k$, $k\in\mathbb{Z}_+$, (Sections~\ref{s2}, \ref{s3} and
\ref{s9}) and $x^k=x^m$ (Section~\ref{s9}). For semisimple algebras 
Problem~\ref{prob1} reduces to solving certain matrix equations over 
matrices with nonnegative integer coefficients (Section~\ref{s5}). 

Another natural and important general question, which we address in this
paper, is how to produce new functorial actions by selfadjoint functors 
(e.g. new solutions to Problem~\ref{prob1}) from already known actions 
(known solutions to Problem~\ref{prob1}). In particular, in Section~\ref{s6} 
we describe the natural operations of external direct sums and external 
tensor products. In Section~\ref{s7} we study how functorial actions 
by selfadjoint functors  can be restricted to centralizer subalgebras.
In the special case of the algebra $A$ having the double
centralizer property for a projective-injective module $X$, we show
that there is a full and faithful functor from the category of 
selfadjoint functors on $A\text{-}\mathrm{mod}$ to the category of 
selfadjoint functors on $\mathrm{End}_A(X)^{\mathrm{op}}\text{-}\mathrm{mod}$.
We also present an example for which this functor is not dense
(essentially surjective).
Finally, in Section~\ref{s8} we study restriction of selfadjoint
functors to invariant Serre subcategories and induced actions on 
quotient categories, which we show are realized via induced actions 
on centralizer subalgebras.
\vspace{5mm}

\noindent
{\bf Acknowledgments.} For the second author the research was 
supported by the Royal Swedish Academy of Sciences and the 
Swedish Research Council. We thank Catharina Stroppel and Henning Haahr Andersen for some 
helpful discussions.

\section{Group actions on module categories}\label{s2}

Let $\Bbbk$ be an algebraically closed field,
$A$ a basic finite dimensional unital $\Bbbk$-algebra,
and $Z(A)$ the center of $A$.
All functors we consider are assumed to be additive and $\Bbbk$-linear.
We denote by $\mathbb{N}$ the set of positive integers and by 
$\mathbb{Z}_+$ the set of nonnegative integers.
Let $\{L_1,L_2,\dots,L_n\}$ be a complete list of pairwise nonisomorphic
simple $A$-modules. Let $P_i$, $i=1,\dots,n$,
denote the indecomposable projective cover of $L_i$.
We denote by $\mathrm{ID}$ the identity functor and by 
$\mathbf{0}$ the zero functor.

To start with we consider the easiest possible nontrivial equation 
\begin{equation}\label{eq1}
\mathrm{F}\circ\mathrm{F}\cong\mathrm{ID}, 
\end{equation}
which just means that $\mathrm{F}$ is a (covariant) involution on 
$A\text{-}\mathrm{mod}$. The answer to Problem~\ref{prob1} for relation 
\eqref{eq1} reduces to the following fairly well-known result
(for which we did not manage to find a reasonably explicit reference 
though):

\begin{proposition}\label{prop2}
\begin{enumerate}[(i)]
\item\label{prop2.1} For an algebra automorphism $\varphi:A\to A$ let 
${}_{\varphi}A$ denote the bimodule $A$ in which the 
left action is twisted by $\varphi$ (i.e. $a\cdot x\cdot b=\varphi(a)xb$). 
Then $\mathrm{F}_{\varphi}:={}_{\varphi}A\otimes_{A}{}_-$ 
is an autoequivalence of $A\text{-}\mathrm{mod}$.
\item\label{prop2.0} We have $\mathrm{F}_{\varphi}\circ 
\mathrm{F}_{\psi}\cong \mathrm{F}_{\varphi\circ \psi}$ for any
automorphisms $\varphi$ and $\psi$ of $A$.
\item\label{prop2.2} Every autoequivalence of $A\text{-}\mathrm{mod}$
is isomorphic to $\mathrm{F}_{\varphi}$  for some algebra 
automorphism $\varphi:A\to A$.
\item\label{prop2.3} $\mathrm{F}_{\varphi}\cong\mathrm{ID}$ if and only
if $\varphi$ is an inner automorphism.
\item\label{prop2.4} $\mathrm{F}_{\varphi}$ is selfadjoint if and only if
$\varphi^2$ is an inner automorphism. 
\end{enumerate}
\end{proposition}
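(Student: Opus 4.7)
The overall plan is to prove (i)--(iv) as standard bimodule/Morita input and then derive (v) by explicitly computing the adjoint of $\mathrm{F}_\varphi$ and reducing to (ii) and (iv). For (i), since ${}_\varphi A$ is free of rank one as a right $A$-module, $\mathrm{F}_\varphi$ is automatically exact and $\Bbbk$-linear, and it is an autoequivalence because (ii) supplies a natural isomorphism $\mathrm{F}_\varphi\circ\mathrm{F}_{\varphi^{-1}}\cong\mathrm{ID}$. For (ii) I would verify the bimodule isomorphism ${}_\varphi A\otimes_A{}_\psi A\cong{}_{\varphi\circ\psi}A$ directly: the tensor balance $xc\otimes y=x\otimes\psi(c)y$ lets one rewrite every pure tensor as $1\otimes\psi(x)y$, so $1\otimes z\mapsto z$ is an isomorphism onto $A$, and tracking how the left action transports through this identification pins down the resulting twist.

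For (iii) I would invoke the Eilenberg--Watts / Morita classification: any $\Bbbk$-linear autoequivalence $\mathrm{G}$ of $A\text{-}\mathrm{mod}$ is naturally isomorphic to $\mathrm{G}(A)\otimes_A{}_-$ with $\mathrm{G}(A)$ an invertible $(A,A)$-bimodule. Since $A$ is basic, $\mathrm{G}(A)\cong A$ as a left $A$-module, and the commuting right $A$-action on $\mathrm{G}(A)\cong A$ must then be right multiplication twisted by a unique algebra automorphism $\varphi$, giving $\mathrm{G}(A)\cong{}_\varphi A$ as bimodules. For (iv), a bimodule map $\Phi:{}_\varphi A\to A$ is determined by $u=\Phi(1)\in A$; bi-linearity forces $\varphi(a)u=au$ for every $a\in A$, so $\Phi$ is an isomorphism iff $u\in A^{\times}$ and $\varphi(a)=uau^{-1}$, which is exactly the condition that $\varphi$ be inner.

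For (v) the essential new computation is the adjoint of $\mathrm{F}_\varphi$. By the tensor--Hom adjunction, the right adjoint is $\mathrm{Hom}_A({}_\varphi A,{}_-)$; a left $A$-linear map $f:{}_\varphi A\to N$ satisfies $f(\varphi(a)x)=af(x)$ and is therefore determined by $n=f(1)$ via $f(b)=\varphi^{-1}(b)n$, identifying the Hom-set with $N$. The left $A$-action, inherited from the right action on ${}_\varphi A$ through $(b\cdot f)(x)=f(xb)$, then reads $b\cdot n=\varphi^{-1}(b)n$, so the right adjoint of $\mathrm{F}_\varphi$ equals $\mathrm{F}_{\varphi^{-1}}$, and by (i) this is also its left adjoint. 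Consequently $\mathrm{F}_\varphi$ is selfadjoint iff $\mathrm{F}_\varphi\cong\mathrm{F}_{\varphi^{-1}}$; composing with $\mathrm{F}_\varphi$ and using (ii) turns this into $\mathrm{F}_{\varphi^2}\cong\mathrm{ID}$, and (iv) then identifies the latter with $\varphi^2$ being inner. The main obstacle in the proposition as a whole is (iii), which leans on the Picard-group classification for basic algebras; once (ii) and (iv) are in hand, (v) is almost a formal consequence.
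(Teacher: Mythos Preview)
Your argument is correct and follows essentially the same route as the paper's. Two minor differences worth noting: for (iii) the paper avoids citing Eilenberg--Watts and instead constructs $\varphi$ directly by fixing an isomorphism $\alpha:{}_AA\to\mathrm{F}{}_AA$ and transporting right multiplications through it; and for (v) the paper simply remarks that the adjoint of an autoequivalence is its inverse (so the adjoint of $\mathrm{F}_\varphi$ is $\mathrm{F}_{\varphi^{-1}}$ by (i) and (ii)), bypassing your explicit tensor--Hom computation. One small slip in your (iv): with $u=\Phi(1)$ the bilinearity of $\Phi:{}_\varphi A\to A$ yields $u\varphi(a)=au$ (not $\varphi(a)u=au$), hence $\varphi(a)=u^{-1}au$; the conclusion that $\varphi$ is inner is of course unaffected.
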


\begin{proof}
The functor $\mathrm{F}_{\varphi}$ just twists the action of $A$ by
$\varphi$. This implies claim \eqref{prop2.0} and, in particular,
that $\mathrm{F}_{\varphi^{-1}}$ is an inverse to
$\mathrm{F}_{\varphi}$, which yields claim \eqref{prop2.1}.

Let $\mathrm{F}:A\text{-}\mathrm{mod}\to A\text{-}\mathrm{mod}$ be
an autoequivalence. Then $\mathrm{F}$ maps indecomposable
projectives to indecomposable projectives, in particular, 
$\mathrm{F}{}_AA\cong {}_AA$ and we can identify these modules fixing 
some isomorphism, say 
$\alpha:{}_AA\overset{\sim}{\longrightarrow} \mathrm{F}{}_AA$.
Let $\beta:A^{\mathrm{op}}\to \mathrm{End}_A({}_AA)$ be the natural
isomorphism sending $a$ to the right multiplication with $a$,
which we denote by $r_a$. Using the following sequence:
\begin{displaymath}
{}_AA\overset{\alpha}{\longrightarrow} 
\mathrm{F}{}_AA\overset{F(r_a)}{\longrightarrow}
\mathrm{F}{}_AA\overset{\alpha}{\longleftarrow}{}_AA
\end{displaymath}
we can define $\varphi(a):=\beta^{-1}(\alpha^{-1}F(r_a)\alpha)$.
Then $\varphi$ is an automorphism of $A$.
It is straightforward to verify that 
$\mathrm{F}\cong\mathrm{F}_{\varphi^{-1}}$. 
This proves claim \eqref{prop2.2}.
 
If $\varphi:A\to A$ is inner, say $\varphi(a)=sas^{-1}$ for some
invertible $s\in A$, it is straightforward to check that the
map $a\mapsto sa$ is a bimodule isomorphism from $A$ to 
${}_{\varphi}A$. This means that $\mathrm{F}_{\varphi}\cong\mathrm{ID}$
in this case. Conversely, if $\mathrm{F}_{\varphi}\cong\mathrm{ID}$,
then there is a bimodule isomorphism $f:A\to {}_{\varphi}A$. Let
$s=f(1)$. Then $s$ is invertible as $1\in f(A)=f(1)\cdot A=sA$ and 
$1\in f(A)=A\cdot f(1)=\varphi(A)s$. Also
$\varphi(a)s=a\cdot f(1)=f(a)=f(1)\cdot a=sa$, which yields 
$\varphi(a)=sas^{-1}$. This proves claim \eqref{prop2.3}.

As $\mathrm{F}_{\varphi}$ is an autoequivalence by claim \eqref{prop2.1}, 
the adjoint of $\mathrm{F}_{\varphi}$ is $\mathrm{F}_{\varphi^{-1}}$.
Therefore claim \eqref{prop2.4} follows from claim \eqref{prop2.3}
\end{proof}

We note that the bimodule ${}_{\varphi}A$ occurring in 
Proposition~\ref{prop2} is sometimes called a {\em twisted bimodule},
see for example \cite{EH}.

Let $G$ be a group. A {\em weak} (resp. {\em strong}) action of $G$ 
on $A\text{-}\mathrm{mod}$ is a collection $\{\mathrm{F}_g:g\in G\}$ of 
endofunctors of $A\text{-}\mathrm{mod}$ such that 
$\mathrm{F}_g\circ \mathrm{F}_h\cong \mathrm{F}_{gh}$ 
(resp. $\mathrm{F}_g\circ \mathrm{F}_h=\mathrm{F}_{gh}$) 
for all $g,h\in G$, and $\mathrm{F}_1\cong \mathrm{ID}$
(resp. $\mathrm{F}_1=\mathrm{ID}$). Two weak actions
$\{\mathrm{F}_g:g\in G\}$ and 
$\{\mathrm{F}'_g:g\in G\}$ are called {\em equivalent} provided that 
$\mathrm{F}_g\cong \mathrm{F}'_g$ for all $g\in G$. 
Let $\mathrm{Aut}(A)$ denote the
group of all automorphisms of $A$ and $\mathrm{Inn}(A)$ denote the normal
subgroup of $\mathrm{Aut}(A)$ consisting of all inner automorphisms.
Set $\mathrm{Out}(A):=\mathrm{Aut}(A)/\mathrm{Inn}(A)$.
From Proposition~\ref{prop2} we have:

\begin{corollary}\label{cor3}
Equivalence classes of weak actions of a group $G$ on 
$A\text{-}\mathrm{mod}$ are in one-to-one correspondence with group 
homomorphisms from $G$ to $\mathrm{Out}(A)$.
\end{corollary}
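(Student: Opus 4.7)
The plan is to use Proposition~\ref{prop2} to translate the statement about functors directly into a statement about automorphisms modulo inner automorphisms, and then verify that the weak action relations match the homomorphism condition on the level of $\mathrm{Out}(A)$.

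First I would start from a weak action $\{\mathrm{F}_g:g\in G\}$. Since $\mathrm{F}_g\circ \mathrm{F}_{g^{-1}}\cong \mathrm{F}_1\cong \mathrm{ID}$, each $\mathrm{F}_g$ is an autoequivalence, so by Proposition~\ref{prop2}\eqref{prop2.2} there exists $\varphi_g\in\mathrm{Aut}(A)$ with $\mathrm{F}_g\cong \mathrm{F}_{\varphi_g}$. Combining claims \eqref{prop2.0} and \eqref{prop2.3} of Proposition~\ref{prop2} gives the key fact that $\mathrm{F}_{\varphi}\cong\mathrm{F}_{\psi}$ if and only if $\varphi\psi^{-1}\in\mathrm{Inn}(A)$, so the coset $[\varphi_g]\in\mathrm{Out}(A)$ is well-defined. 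The relation $\mathrm{F}_g\circ\mathrm{F}_h\cong\mathrm{F}_{gh}$ together with Proposition~\ref{prop2}\eqref{prop2.0} then yields $\mathrm{F}_{\varphi_g\circ\varphi_h}\cong\mathrm{F}_{\varphi_{gh}}$, hence $[\varphi_g][\varphi_h]=[\varphi_{gh}]$ in $\mathrm{Out}(A)$, so $g\mapsto[\varphi_g]$ is a group homomorphism $G\to\mathrm{Out}(A)$. Also $\mathrm{F}_1\cong\mathrm{ID}=\mathrm{F}_{\mathrm{id}_A}$ forces $[\varphi_1]=1$, as required.

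In the other direction, given a homomorphism $\pi:G\to\mathrm{Out}(A)$, I would pick a set-theoretic lift $\varphi_g\in\mathrm{Aut}(A)$ of $\pi(g)$ for each $g\in G$ (with $\varphi_1=\mathrm{id}_A$) and set $\mathrm{F}_g:=\mathrm{F}_{\varphi_g}$. Then Proposition~\ref{prop2}\eqref{prop2.0} gives $\mathrm{F}_g\circ\mathrm{F}_h=\mathrm{F}_{\varphi_g}\circ \mathrm{F}_{\varphi_h}\cong\mathrm{F}_{\varphi_g\circ\varphi_h}$, and since $\varphi_g\circ\varphi_h$ and $\varphi_{gh}$ represent the same element of $\mathrm{Out}(A)$, Proposition~\ref{prop2}\eqref{prop2.3} implies $\mathrm{F}_{\varphi_g\circ\varphi_h}\cong\mathrm{F}_{\varphi_{gh}}=\mathrm{F}_{gh}$; this shows $\{\mathrm{F}_g\}$ is a weak action.

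Finally I would check that these two assignments are mutually inverse on equivalence classes. If two weak actions $\{\mathrm{F}_g\}$ and $\{\mathrm{F}'_g\}$ are equivalent, then $\mathrm{F}_{\varphi_g}\cong\mathrm{F}_g\cong\mathrm{F}'_g\cong\mathrm{F}_{\varphi'_g}$, so $[\varphi_g]=[\varphi'_g]$ and the associated homomorphisms coincide. Conversely, if two lifts $\{\varphi_g\}$ and $\{\varphi'_g\}$ represent the same homomorphism $\pi$, then $\varphi_g(\varphi'_g)^{-1}\in\mathrm{Inn}(A)$ for each $g$, so $\mathrm{F}_{\varphi_g}\cong\mathrm{F}_{\varphi'_g}$ by the same combination of \eqref{prop2.0} and \eqref{prop2.3}, and the weak actions are equivalent. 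The only subtle point, and the one I would treat most carefully, is the well-definedness of $[\varphi_g]$ in the first direction: one must be sure that the $\varphi_g$ produced by Proposition~\ref{prop2}\eqref{prop2.2} is unique up to inner automorphism, and this is exactly what the biconditional extracted from \eqref{prop2.0}+\eqref{prop2.3} provides, so no genuine obstacle remains.
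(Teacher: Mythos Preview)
Your proof is correct and follows essentially the same approach as the paper: both use Proposition~\ref{prop2} to pass from the weak action to automorphisms defined up to inner automorphisms and then verify the homomorphism property. The paper's version is terser, leaving the inverse construction and the bijection check to the phrase ``from the definitions it follows,'' whereas you spell these out explicitly.
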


\begin{proof}
Let $\{\mathrm{F}_g:g\in G\}$  be a weak action of $G$ on 
$A\text{-}\mathrm{mod}$. Then for any $g\in G$ the functor $\mathrm{F}_g$ 
is an autoequivalence of $A\text{-}\mathrm{mod}$ and hence is isomorphic 
to the functor $\mathrm{F}_{\varphi_g}$ for some automorphism $\varphi_g$ 
of $A$ (Proposition~\ref{prop2}\eqref{prop2.2}). 
By Proposition~\ref{prop2}\eqref{prop2.3}, the 
automorphism $\varphi_g$ is defined up to a factor from 
$\mathrm{Inn}(A)$, hence, by Proposition~\ref{prop2}\eqref{prop2.0},
the map $g\mapsto \varphi_g\mathrm{Inn}(A)$ is a homomorphism from $G$ to 
$\mathrm{Out}(A)$. From the definitions it follows that equivalent actions 
produce the same homomorphism and nonequivalent actions produce different 
homomorphisms. The claim follows.
\end{proof}

\begin{corollary}\label{cor4}
If $\mathrm{Inn}(A)$ is trivial, then every weak action
of a group $G$ on $A\text{-}\mathrm{mod}$ is equivalent to a
strong action.
\end{corollary}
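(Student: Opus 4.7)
The plan is to invoke Corollary~\ref{cor3} to convert the weak action into an honest group homomorphism $G\to\mathrm{Aut}(A)$, and then realize this homomorphism by a strictly functorial ``twist of scalars'' model.

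Given a weak action $\{\mathrm{F}_g:g\in G\}$, the argument in the proof of Corollary~\ref{cor3} (using Proposition~\ref{prop2}\eqref{prop2.2} and \eqref{prop2.3}) supplies, for each $g\in G$, an automorphism $\varphi_g\in\mathrm{Aut}(A)$ with $\mathrm{F}_g\cong\mathrm{F}_{\varphi_g}$, uniquely determined modulo $\mathrm{Inn}(A)$. Under the hypothesis $\mathrm{Inn}(A)=1$ the automorphism $\varphi_g$ is therefore uniquely determined; combined with Proposition~\ref{prop2}\eqref{prop2.0} this forces $g\mapsto\varphi_g$ to be a strict group homomorphism $G\to\mathrm{Aut}(A)$, with $\varphi_1=\mathrm{id}_A$ and $\varphi_g\circ\varphi_h=\varphi_{gh}$.

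Next I would construct a strong action in the same equivalence class by replacing the bimodule functor $\mathrm{F}_{\varphi_g}={}_{\varphi_g}A\otimes_A -$, which composes only up to canonical isomorphism, with a strictly functorial model. Writing an $A$-module as a pair $(M,\mu)$, where $\mu:A\to\mathrm{End}_\Bbbk(M)$ is the structure map, I would define $\mathrm{F}'_g(M,\mu):=(M,\mu\circ\varphi_g)$ and $\mathrm{F}'_g(f):=f$ on morphisms. This functor is naturally isomorphic to $\mathrm{F}_{\varphi_g}$ (the natural isomorphism being the standard identification ${}_{\varphi_g}A\otimes_A M\cong M$ with twisted action), hence isomorphic to $\mathrm{F}_g$, so $\{\mathrm{F}'_g\}$ is equivalent to the original weak action. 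The strong-action axioms then hold on the nose: $\mathrm{F}'_1=\mathrm{ID}$ because $\varphi_1=\mathrm{id}_A$, and $\mathrm{F}'_g\circ\mathrm{F}'_h=\mathrm{F}'_{gh}$ because iterating two twists of scalars yields the single twist by the composed automorphism, which by the homomorphism property coincides strictly with $\varphi_{gh}$.

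The only substantive issue, and hence the main (minor) obstacle, is purely one of bookkeeping: the order in which one composes scalars in the definition of $\mathrm{F}'_g$ must be aligned with the convention of Proposition~\ref{prop2}\eqref{prop2.0}. If the two conventions turn out to be opposite, the fix is simply to replace $\varphi_g$ by $\varphi_g^{-1}$ throughout, which is again a group homomorphism $G\to\mathrm{Aut}(A)$ with each $\mathrm{F}_{\varphi_g^{-1}}$ naturally isomorphic (through the inverse equivalence) to $\mathrm{F}_g$. Once the conventions match, the construction above produces the required strong action equivalent to $\{\mathrm{F}_g\}$.
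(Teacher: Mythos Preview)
Your argument is exactly the paper's: the proof there is a single sentence noting that triviality of $\mathrm{Inn}(A)$ makes each $\varphi_g$ uniquely determined, and then taking as the strong action the collection of literal twist-of-scalars functors $\mathrm{H}_g(M,\mu)=(M,\mu\circ\varphi_g)$, which is precisely your $\mathrm{F}'_g$.

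One caution about your final paragraph, though: the proposed fix of replacing $\varphi_g$ by $\varphi_g^{-1}$ would not work. If $g\mapsto\varphi_g$ is a homomorphism then $g\mapsto\varphi_g^{-1}$ is an \emph{anti}-homomorphism, and, more to the point, $\mathrm{F}_{\varphi_g^{-1}}$ is the \emph{inverse} of $\mathrm{F}_{\varphi_g}\cong\mathrm{F}_g$ rather than isomorphic to it, so the resulting collection would no longer be equivalent to the original weak action. Fortunately no fix is needed: a direct computation gives $\mathrm{F}'_\varphi\circ\mathrm{F}'_\psi=\mathrm{F}'_{\psi\circ\varphi}$, while unwinding the weak-action axiom through the same identification yields $\varphi_h\circ\varphi_g=\varphi_{gh}$; the two order-reversals cancel and your original $\mathrm{F}'_g$ already satisfies $\mathrm{F}'_g\circ\mathrm{F}'_h=\mathrm{F}'_{gh}$ on the nose.
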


\begin{proof}
If  $\mathrm{Inn}(A)$ is trivial, the automorphism $\varphi_g$
from the proof of Corollary~\ref{cor3} is uniquely defined, so the
action $\{\mathrm{F}_g:g\in G\}$ is equivalent to the strong action 
$\{\mathrm{H}_{g}:g\in G\}$, where 
$\mathrm{H}_{g}$ denotes the functor of twisting the
$A$-action by $\varphi_g$. The claim follows.
\end{proof}

\begin{corollary}\label{cor5}
Isomorphism classes of selfadjoint functors $\mathrm{F}$ satisfying
\eqref{eq1} are in one-to-one correspondence with group homomorphisms
from $\mathbb{Z}_2$ to $\mathrm{Out}(A)$. The correspondence is given by:
\begin{equation}\label{eq3}
\begin{array}{c}
\mathrm{F}\mapsto f, \text{ where } f:\mathbb{Z}_2\to \mathrm{Out}(A) 
\text{ is such that }\\ 
\mathrm{F}\cong \mathrm{F}_{\varphi} \text{ for any } 
\varphi\in f(1).
\end{array}
\end{equation}
\end{corollary}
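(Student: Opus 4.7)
The plan is to specialize Corollary~\ref{cor3} to the group $G=\mathbb{Z}_2$ and to check that, under the hypothesis \eqref{eq1}, the selfadjointness condition is automatic (so it does not cut down the class being counted). Concretely, given an $\mathrm{F}$ satisfying \eqref{eq1}, the relation $\mathrm{F}\circ\mathrm{F}\cong \mathrm{ID}$ already makes $\mathrm{F}$ an autoequivalence with $\mathrm{F}^{-1}\cong\mathrm{F}$; since for an autoequivalence the left (and right) adjoint is the inverse, $\mathrm{F}$ is automatically selfadjoint. Thus isomorphism classes of selfadjoint $\mathrm{F}$ satisfying \eqref{eq1} coincide with isomorphism classes of endofunctors satisfying \eqref{eq1}.

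Next, I would identify such isomorphism classes with equivalence classes of weak actions of $\mathbb{Z}_2$ on $A\text{-}\mathrm{mod}$: a weak action of $\mathbb{Z}_2=\{0,1\}$ is nothing but a pair $(\mathrm{F}_0,\mathrm{F}_1)$ with $\mathrm{F}_0\cong \mathrm{ID}$ and $\mathrm{F}_1\circ \mathrm{F}_1\cong \mathrm{ID}$, and equivalence of weak actions reduces to $\mathrm{F}_1\cong \mathrm{F}'_1$. Corollary~\ref{cor3} then provides a bijection between these equivalence classes and homomorphisms $\mathbb{Z}_2\to\mathrm{Out}(A)$.

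Finally, I would unwind the correspondence from the proof of Corollary~\ref{cor3} to verify formula \eqref{eq3}: any $\mathrm{F}$ satisfying \eqref{eq1} is, by Proposition~\ref{prop2}\eqref{prop2.2}, isomorphic to $\mathrm{F}_\varphi$ for some $\varphi\in\mathrm{Aut}(A)$, and by Proposition~\ref{prop2}\eqref{prop2.0}--\eqref{prop2.3} the coset $\varphi\,\mathrm{Inn}(A)\in\mathrm{Out}(A)$ is independent of the choice of $\varphi$ and of the representative of the isomorphism class of $\mathrm{F}$, so the assignment $\mathrm{F}\mapsto f$ with $f(1):=\varphi\,\mathrm{Inn}(A)$ is well-defined. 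The relation $\mathrm{F}_{\varphi}\circ\mathrm{F}_\varphi\cong \mathrm{F}_{\varphi^2}\cong\mathrm{ID}$ together with Proposition~\ref{prop2}\eqref{prop2.3} gives $\varphi^2\in\mathrm{Inn}(A)$, so $f$ is a group homomorphism; conversely, any such $f$ clearly arises from $\mathrm{F}_\varphi$ for any $\varphi\in f(1)$.

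Honestly, there is no substantial obstacle: the corollary is a direct specialization, and the only point that needs an explicit remark is the automatic selfadjointness mentioned in the first paragraph, which prevents a mismatch between the set ``selfadjoint $\mathrm{F}$ with \eqref{eq1}'' and the set ``$\mathrm{F}$ with \eqref{eq1}'' that feeds into Corollary~\ref{cor3}.
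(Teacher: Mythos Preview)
Your proposal is correct and follows essentially the same approach as the paper: the paper's proof also observes that any $\mathrm{F}$ satisfying \eqref{eq1} is automatically selfadjoint (since $\mathrm{F}\cong\mathrm{F}^{-1}$) and then invokes Corollary~\ref{cor3} and its proof. Your write-up simply unpacks the specialization to $G=\mathbb{Z}_2$ and the formula~\eqref{eq3} in more detail than the paper does.
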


\begin{proof}
Note that any autoequivalence of $A\text{-}\mathrm{mod}$ satisfying
\eqref{eq1} is selfadjoint (as $\mathrm{F}\cong \mathrm{F}^{-1}$ by
\eqref{eq1}). Therefore the claim follows from Corollary~\ref{cor3} 
and its proof.
\end{proof}

\begin{corollary}\label{cor6}
\begin{enumerate}[(i)]
\item\label{cor6.1} Let $n\in\{2,3,4,\dots\}$. Then isomorphism classes 
of endofunctors $\mathrm{F}$ of $A\text{-}\mathrm{mod}$ satisfying
\begin{equation}\label{eq2}
\mathrm{F}^{n}:=\underbrace{\mathrm{F}\circ\mathrm{F}\circ \dots\circ
\mathrm{F}}_{\text{$n$ times}}\cong \mathrm{ID} 
\end{equation}
are in one-to-one correspondence with group homomorphisms
from $\mathbb{Z}_n$ to $\mathrm{Out}(A)$ (the correspondence is given 
by \eqref{eq3}, where $\mathbb{Z}_2$ is substituted by $\mathbb{Z}_n$).
\item\label{cor6.2} The endofunctor $\mathrm{F}$ from \eqref{cor6.1}
is selfadjoint if and only if $\mathrm{F}^2\cong \mathrm{ID}$.
\end{enumerate}
\end{corollary}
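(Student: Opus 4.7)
The plan is to run essentially the same argument as for Corollary~\ref{cor5}, with $\mathbb{Z}_2$ replaced by $\mathbb{Z}_n$, and then to obtain part (ii) by combining the resulting classification with the elementary observation that the adjoint of an autoequivalence coincides with its inverse.

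For part \eqref{cor6.1}, the first step is to notice that the relation \eqref{eq2} forces $\mathrm{F}$ to be an autoequivalence, since $\mathrm{F}^{n-1}$ provides a two-sided quasi-inverse up to isomorphism. Then Proposition~\ref{prop2}\eqref{prop2.2} lets me write $\mathrm{F}\cong \mathrm{F}_\varphi$ for some $\varphi\in\mathrm{Aut}(A)$, and iterating Proposition~\ref{prop2}\eqref{prop2.0} gives $\mathrm{F}^n\cong \mathrm{F}_{\varphi^n}$. Proposition~\ref{prop2}\eqref{prop2.3} now translates $\mathrm{F}^n\cong \mathrm{ID}$ into the statement that $\varphi^n$ is inner, i.e. that the coset $\varphi\mathrm{Inn}(A)$ has order dividing $n$ in $\mathrm{Out}(A)$. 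So $f\colon \mathbb{Z}_n\to \mathrm{Out}(A)$, $f(1)=\varphi\mathrm{Inn}(A)$, is a well-defined group homomorphism, independent (by \eqref{prop2.3} again) of the choice of $\varphi$ inside its coset and of the representative of the isomorphism class of $\mathrm{F}$. Injectivity of $\mathrm{F}\mapsto f$ follows because $\mathrm{F}_\varphi\cong\mathrm{F}_\psi$ is equivalent to $\varphi\psi^{-1}$ being inner (combine \eqref{prop2.0} and \eqref{prop2.3}), and surjectivity follows by lifting any homomorphism $f$ to an automorphism $\varphi$ and setting $\mathrm{F}:=\mathrm{F}_\varphi$.

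For part \eqref{cor6.2}, since $\mathrm{F}$ is an autoequivalence its left and right adjoints are both isomorphic to $\mathrm{F}^{-1}\cong \mathrm{F}^{n-1}$. Therefore $\mathrm{F}$ is selfadjoint iff $\mathrm{F}\cong \mathrm{F}^{n-1}$, and composing with $\mathrm{F}$ converts this into $\mathrm{F}^2\cong \mathrm{F}^n\cong \mathrm{ID}$; conversely $\mathrm{F}^2\cong \mathrm{ID}$ gives $\mathrm{F}\cong \mathrm{F}^{-1}$, which is the adjoint. I do not expect any real obstacle: the only subtle point is that the relation \eqref{eq2} must first be used to deduce that $\mathrm{F}$ is invertible, since without this the classifications in Proposition~\ref{prop2} are unavailable.
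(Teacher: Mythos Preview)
Your proposal is correct and follows essentially the same approach as the paper, which simply says that part \eqref{cor6.1} is proved analogously to Corollary~\ref{cor5} (itself a consequence of Corollary~\ref{cor3} and Proposition~\ref{prop2}) and that part \eqref{cor6.2} is obvious. You have merely spelled out the details explicitly rather than invoking the earlier corollaries by reference.
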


\begin{proof}
Claim \eqref{cor6.1} is proved similarly to Corollary~\ref{cor5}. 
Claim  \eqref{cor6.2} is obvious.
\end{proof}

\section{Selfadjoint functorial square roots}\label{s3}

In this section we consider the generalization 
\begin{equation}\label{eq4}
\mathrm{F}\circ\mathrm{F}\cong\mathtt{k}\mathrm{ID},\quad
\mathtt{k}\in\{2,3,4,\dots\}, 
\end{equation}
of the equation \eqref{eq1} (see \eqref{eq0} for notation). Our
main result here is the following:

\begin{theorem}\label{thm7}
\begin{enumerate}[(i)]
\item\label{thm7.1} A selfadjoint endofunctor $\mathrm{F}$
of $A\text{-}\mathrm{mod}$ satisfying \eqref{eq4} exists 
if and only if $\mathtt{k}=\mathtt{m}^2$ for some 
$\mathtt{m}\in\{2,3,4,\dots\}$.
\item\label{thm7.2} If $\mathtt{k}=\mathtt{m}^2$ for some 
$\mathtt{m}\in\{2,3,4,\dots\}$, then isomorphism classes of 
selfadjoint endofunctors  $\mathrm{F}$ on $A\text{-}\mathrm{mod}$ 
satisfying \eqref{eq4} are in one-to-one correspondence with 
isomorphism  classes of selfadjoint endofunctors  $\mathrm{F}'$ on $A\text{-}\mathrm{mod}$  satisfying \eqref{eq1}. The correspondence 
is  given by: $\mathtt{m}\mathrm{F}'\mapsto \mathrm{F}'$.
\end{enumerate}
\end{theorem}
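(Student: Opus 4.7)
For part~\eqref{thm7.1}, the plan is to exploit the combinatorial rigidity imposed by the relation together with self-adjointness. Since $\mathrm{F}$ is both left and right adjoint to itself, it is exact and preserves projectives (and injectives). The matrix $B\in M_n(\mathbb{Z}_+)$ defined by $\mathrm{F} P_j\cong\bigoplus_i P_i^{B_{ij}}$ satisfies $B^2=\mathtt{k}I_n$. By adjunction $B_{ij}=\dim\mathrm{Hom}(\mathrm{F} P_j,L_i)=\dim\mathrm{Hom}(P_j,\mathrm{F} L_i)=[\mathrm{top}(\mathrm{F} L_i):L_j]$, and the isomorphism $\mathrm{Hom}(L_i,\mathrm{F} L_j)\cong\mathrm{Hom}(\mathrm{F} L_i,L_j)$ from self-adjointness additionally gives $B_{ij}=[\mathrm{soc}(\mathrm{F} L_j):L_i]$. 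Non-negativity of $B$ together with $(B^2)_{ij}=0$ for $i\neq j$ forces each term $B_{ik}B_{kj}$ to vanish separately, so each row and column of $B$ has at most one non-zero entry; this yields an involution $\sigma$ of $\{1,\dots,n\}$ and positive integers $\mathtt{m}_j=B_{j,\sigma(j)}$ and $\mathtt{n}_j=B_{\sigma(j),j}$ satisfying $\mathtt{m}_j\mathtt{n}_j=\mathtt{k}$, with $\mathrm{top}(\mathrm{F} L_j)\cong L_{\sigma(j)}^{\mathtt{m}_j}$ and $\mathrm{soc}(\mathrm{F} L_j)\cong L_{\sigma(j)}^{\mathtt{n}_j}$.

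The crucial step in part~\eqref{thm7.1} is to show that $\mathrm{F} L_j$ is semisimple. For this I apply $\mathrm{F}$ to the exact sequence $0\to\mathrm{rad}(\mathrm{F} L_j)\to\mathrm{F} L_j\to L_{\sigma(j)}^{\mathtt{m}_j}\to 0$ and use that $\mathrm{F}^2 L_j\cong\mathtt{k} L_j$ is semisimple: the resulting surjection $\mathtt{k} L_j\twoheadrightarrow\mathtt{m}_j\mathrm{F} L_{\sigma(j)}$ exhibits $\mathtt{m}_j\mathrm{F} L_{\sigma(j)}$ as a semisimple quotient. Since $\sigma$ is a bijection, running this over all indices shows $\mathrm{F} L_j$ is semisimple for every $j$, hence $\mathrm{top}(\mathrm{F} L_j)=\mathrm{F} L_j=\mathrm{soc}(\mathrm{F} L_j)$ forces $\mathtt{m}_j=\mathtt{n}_j$, so $\mathtt{k}=\mathtt{m}_j^2$ with $\mathtt{m}_j$ the same integer $\mathtt{m}\geq 2$ for every $j$. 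Sufficiency is immediate from $\mathrm{F}=\mathtt{m}\mathrm{ID}$.

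For part~\eqref{thm7.2}, the map $\mathrm{F}'\mapsto\mathtt{m}\mathrm{F}'$ clearly preserves self-adjointness and sends the relation \eqref{eq1} to \eqref{eq4}, and its injectivity on isomorphism classes follows from Krull--Schmidt in the category of exact endofunctors of $A\text{-}\mathrm{mod}$ (equivalently, of $(A,A)$-bimodules). The content is surjectivity. Given $\mathrm{F}$ satisfying \eqref{eq4}, I write $\mathrm{F}\cong N\otimes_A{-}$ for the $(A,A)$-bimodule $N=\mathrm{F}(A)$; by part~\eqref{thm7.1} one has $N\cong A^\mathtt{m}$ as left $A$-module, and the assumptions translate into $N\otimes_A N\cong A^\mathtt{k}$ and $N\cong N^*$ as bimodules. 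The goal is to produce an algebra automorphism $\varphi$ of $A$ realizing the involution $\sigma$ on primitive idempotents (so that $\varphi^2$ is inner) together with a bimodule isomorphism $N\cong({}_\varphi A)^\mathtt{m}$; by Proposition~\ref{prop2} and Corollary~\ref{cor5} this gives $\mathrm{F}\cong\mathtt{m}\mathrm{F}_\varphi$ with $\mathrm{F}_\varphi$ self-adjoint and satisfying $\mathrm{F}_\varphi^2\cong\mathrm{ID}$.

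The main obstacle is this last structural claim. The right $A$-action on $N\cong A^\mathtt{m}$ is encoded as an algebra homomorphism $\phi:A\to M_\mathtt{m}(A^{\mathrm{op}})^{\mathrm{op}}$, and one must show that the combined rigidity of $N\otimes_A N\cong A^\mathtt{k}$ and the self-duality $N\cong N^*$ forces $\phi$ to be conjugate in $GL_\mathtt{m}(A^{\mathrm{op}})$ to the diagonal embedding of an automorphism $\varphi$ of $A$. An equivalent reformulation is the production of a unital embedding $M_\mathtt{m}(\Bbbk)\hookrightarrow\mathrm{End}(\mathrm{F})$ whose matrix-unit idempotents split $\mathrm{F}$ into $\mathtt{m}$ mutually isomorphic direct summands; I expect this to be reachable by identifying the ring structure on $\mathrm{End}(\mathrm{F})\cong\mathrm{Nat}(\mathrm{ID},\mathrm{F}^2)\cong Z(A)^\mathtt{k}$ (as $\Bbbk$-vector space, via the fixed iso $\mathrm{F}^2\cong\mathtt{k}\mathrm{ID}$ and the self-adjunction) as the matrix algebra $M_\mathtt{m}(Z(A))$.
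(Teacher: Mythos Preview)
Your argument for part~\eqref{thm7.1} follows the same route as the paper: the block form of the matrix over $\mathbb{Z}_+$, semisimplicity of $\mathrm{F}L_j$ via a map into or out of $\mathrm{F}^2L\cong\mathtt{k}L$, and then the adjunction computation forcing the two block entries to coincide. One slip to fix: the equality $\dim\mathrm{Hom}(P_j,\mathrm{F}L_i)=[\mathrm{top}(\mathrm{F}L_i):L_j]$ is false in general; the left side is the full composition multiplicity $[\mathrm{F}L_i:L_j]$. This does not damage the argument, since once the block form is known all composition factors of $\mathrm{F}L_j$ are copies of $L_{\sigma(j)}$ anyway, and your surjection $\mathtt{k}L_j\twoheadrightarrow(\text{power of }\mathrm{F}L_{\sigma(j)})$ still yields semisimplicity.

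For part~\eqref{thm7.2} your strategy is the right one, and it is the paper's strategy: produce $\mathtt{m}$ orthogonal idempotents in $\mathrm{End}(\mathrm{F})=\mathrm{Hom}_{A\text{-}A}(N,N)$ to split $\mathrm{F}$. The gap is precisely your final ``I expect''. You hope to identify $\mathrm{End}(\mathrm{F})$ directly as the matrix ring $M_\mathtt{m}(Z(A))$, but the adjunction isomorphism $\mathrm{End}(\mathrm{F})\cong\mathrm{Nat}(\mathrm{ID},\mathrm{F}^2)\cong Z(A)^\mathtt{k}$ is only $Z(A)$-linear, not multiplicative, and there is no evident way to read off the ring structure from it. (A posteriori the ring \emph{is} $M_\mathtt{m}(Z(A))$, but that is what you are trying to prove.) The paper's key maneuver is to sidestep this: pass to the semisimple quotient $\overline{A}=A/\mathrm{Rad}(A)$, where the induced bimodule $\overline{N}$ is transparent, and show that $\mathrm{End}(\mathrm{F})$ modulo its radical is $M_\mathtt{m}(\overline{Z(A)})$. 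Then lift the diagonal idempotents back to $\mathrm{End}(\mathrm{F})$ to obtain $\mathrm{F}\cong\mathrm{F}_1\oplus\cdots\oplus\mathrm{F}_\mathtt{m}$. A short further step, which your outline also elides, is still required: from $\bigoplus_{i,j}\mathrm{F}_i\mathrm{F}_j\cong\mathtt{k}\,\mathrm{ID}$ together with control of each $\mathrm{F}_iL$ on simples one gets $\mathrm{F}_i\mathrm{F}_j\cong\mathrm{ID}$ for all $i,j$ by Krull--Schmidt, so each $\mathrm{F}_i$ is a selfadjoint autoequivalence and all are isomorphic.
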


Let $[A]$ denote the Grothendieck group 
of $A\text{-}\mathrm{mod}$. For $M\in A\text{-}\mathrm{mod}$ we denote by
$[M]$ the image of $M$ in $[A]$. The group $[A]$ is a free abelian group 
with basis $\mathbf{l}:=([L_1],[L_2],\dots,[L_n])$. Every exact endofunctor
$\mathrm{G}$ on $A\text{-}\mathrm{mod}$ defines a group 
endomorphism $[\mathrm{G}]$ of $[A]$. We denote by 
$M_{\mathrm{G}}$ the matrix of  $[\mathrm{G}]$
in the basis $\mathbf{l}$. Obviously, 
$M_{\mathrm{G}}\in \mathrm{Mat}_{n\times n}(\mathbb{Z}_+)$.

If $\mathrm{G}$ is selfadjoint, it is exact and maps projective modules
to projective modules (and injective modules to injective modules,
see for example \cite[Corollary~5.21]{Ma}).  Then 
$\mathrm{G}P_j=\oplus_{i=1}^n\mathtt{x}_{ij}P_i$. Define
$N_{\mathrm{G}}=(\mathtt{x}_{ij})_{i,j=1,\dots,n}$.

\begin{lemma}\label{lem9}
We have $N_{\mathrm{G}}=M_{\mathrm{G}}^t$, where ${\cdot}^t$ denotes
the {\em transposed} matrix.
\end{lemma}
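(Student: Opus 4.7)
My plan is to exploit the adjunction $\mathrm{Hom}_A(\mathrm{G}M,N)\cong \mathrm{Hom}_A(M,\mathrm{G}N)$ (valid for any natural number of terms since $\mathrm{G}$ is selfadjoint) applied to $M=P_j$ and $N=L_i$, and to read off both sides as entries of the relevant matrices.

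First I would recall that, since $A$ is basic, the simples $L_i$ are one-dimensional and $P_i$ is the projective cover of $L_i$, so the standard identity
\begin{displaymath}
\dim_{\Bbbk}\mathrm{Hom}_A(P_j,L_i)=\delta_{ij}
\end{displaymath}
holds. More generally, for any $M\in A\text{-}\mathrm{mod}$, $\dim_{\Bbbk}\mathrm{Hom}_A(P_j,M)$ equals the Jordan--H\"older multiplicity $[M:L_j]$; the entry $(M_{\mathrm{G}})_{ji}$ is by definition exactly $[\mathrm{G}L_i:L_j]$.

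Next I would compute both sides of the adjunction identity
\begin{displaymath}
\dim_{\Bbbk}\mathrm{Hom}_A(\mathrm{G}P_j,L_i)=\dim_{\Bbbk}\mathrm{Hom}_A(P_j,\mathrm{G}L_i).
\end{displaymath}
On the left, decomposing $\mathrm{G}P_j\cong\bigoplus_{k=1}^n \mathtt{x}_{kj}P_k$ and using the first recollection gives $\mathtt{x}_{ij}=(N_{\mathrm{G}})_{ij}$. On the right, the same dimension counts the multiplicity $[\mathrm{G}L_i:L_j]=(M_{\mathrm{G}})_{ji}=(M_{\mathrm{G}}^t)_{ij}$. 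Equating yields $N_{\mathrm{G}}=M_{\mathrm{G}}^t$.

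There is no real obstacle: the only thing one must be a little careful about is that $\mathrm{G}$ sends projectives to projectives, which is precisely what is invoked just before the statement (via \cite[Corollary~5.21]{Ma}), so that the decomposition $\mathrm{G}P_j\cong\bigoplus_k \mathtt{x}_{kj}P_k$ actually makes sense and the integers $\mathtt{x}_{kj}$ are well-defined by Krull--Schmidt. Beyond that, the argument is a direct double-entry bookkeeping via Hom-dimensions and adjunction.
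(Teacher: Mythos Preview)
Your proof is correct and is essentially identical to the paper's own argument: both compute $\dim_{\Bbbk}\mathrm{Hom}_A(\mathrm{G}P_j,L_i)=\dim_{\Bbbk}\mathrm{Hom}_A(P_j,\mathrm{G}L_i)$ via selfadjointness and identify the two sides as $\mathtt{x}_{ij}$ and $(M_{\mathrm{G}})_{ji}$ respectively. You simply spell out in slightly more detail why each Hom-dimension equals the corresponding matrix entry.
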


\begin{proof}
Let $M_{\mathrm{G}}=(\mathtt{y}_{ij})_{i,j=1,\dots,n}$. The
claim follows from the selfadjointness of $\mathrm{G}$ as follows:
\begin{displaymath}
\mathtt{x}_{ij}=\dim \mathrm{Hom}_{A}(\mathrm{G}P_j,L_i)=
\dim \mathrm{Hom}_{A}(P_j,\mathrm{G}L_i)=\mathtt{y}_{ji}.\qedhere
\end{displaymath}
\end{proof}

To prove Theorem~\ref{thm7} we will need to understand 
$M_{\mathrm{F}}$ for selfadjoint functors
$\mathrm{F}$ satisfying \eqref{eq4}. Let $\mathbf{1}_n$ denote the 
identity matrix in $\mathrm{Mat}_{n\times n}(\mathbb{Z}_+)$.
Then from \eqref{eq4} we obtain
$M_{\mathrm{F}}^2=\mathtt{k}\cdot \mathbf{1}_n$. 
The canonical form for such $M_{\mathrm{F}}$ is given by the
following lemma:

\begin{lemma}\label{lem8}
Let $M\in\mathrm{Mat}_{n\times n}(\mathbb{Z}_+)$ be such that 
$M^2=\mathtt{k}\cdot \mathbf{1}_n$. 
Then there exists a permutation matrix $S$ such that
$SMS^{-1}$ is a direct sum of matrices of the form 
\begin{displaymath}
\left(\begin{array}{cc}0&a\\b&0\end{array}\right),\quad 
a,b\in \mathbb{Z}_+, ab=\mathtt{k};\quad\quad\text{ and }\quad\quad
\left(a\right),\quad a\in \mathbb{Z}_+, a^2=\mathtt{k}. 
\end{displaymath}
\end{lemma}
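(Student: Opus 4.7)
The plan is to read off the block structure directly from the entrywise equations. Writing $M=(m_{ij})$ and expanding $M^2=\mathtt{k}\cdot\mathbf{1}_n$ yields, for $i\neq j$, the identity $\sum_l m_{il}m_{lj}=0$; since all entries are nonnegative, this forces $m_{il}m_{lj}=0$ for every individual $l$. The diagonal relations $\sum_l m_{il}m_{li}=\mathtt{k}$ guarantee in particular that every row and every column of $M$ is nonzero. I plan to classify the indices $i\in\{1,\dots,n\}$ according to whether $m_{ii}=0$ or not, and show that each class produces a block of the required shape.

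For an index $i$ with $m_{ii}>0$, substituting $l=i$ into $m_{il}m_{lj}=0$ for $j\neq i$ forces $m_{ij}=0$; symmetrically $m_{ji}=0$. The diagonal equation for $i$ then collapses to $m_{ii}^2=\mathtt{k}$, so $i$ contributes an isolated $1\times 1$ block $(a)$ with $a^2=\mathtt{k}$, decoupled from the remaining indices.

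For an index $i$ with $m_{ii}=0$, pick some $l$ with $m_{il}>0$; necessarily $l\neq i$. The relation $m_{il}m_{lj}=0$ for $j\neq i$ combined with $m_{il}>0$ forces $m_{lj}=0$ for every $j\neq i$, so row $l$ is supported only at column $i$. The diagonal equation for $l$ then reads $m_{li}m_{il}=\mathtt{k}$, giving $m_{li}>0$ and $m_{ll}=0$. Uniqueness of $l$ comes from the diagonal equation for $i$: if two distinct indices $l,l'$ were both in the support of row $i$, the summands $m_{il}m_{li}$ and $m_{il'}m_{l'i}$ would each be equal to $\mathtt{k}$, overshooting the target value $\mathtt{k}$. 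The symmetric argument on columns gives a unique nonzero entry in column $i$; since row $l$ is supported only at column $i$, the row-partner and column-partner coincide. This produces an involutive pairing $i\leftrightarrow l(i)$ on the set of zero-diagonal indices, and the $2\times 2$ submatrix indexed by the pair $\{i,l(i)\}$ has antidiagonal entries $a=m_{i,l(i)}$ and $b=m_{l(i),i}$ with $ab=\mathtt{k}$, while all other entries in the corresponding rows and columns vanish.

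Choosing $S$ to be a permutation matrix that brings each pair $\{i,l(i)\}$ into consecutive positions and leaves the indices with $m_{ii}>0$ in place then puts $SMS^{-1}$ into the claimed block-diagonal form. The main technical point is the uniqueness of the partner $l(i)$ in the third step; once that is established, everything else is a direct transcription of the matrix equation under nonnegativity.
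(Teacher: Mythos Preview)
Your argument is correct and follows essentially the same route as the paper's proof: both exploit that nonnegativity turns the off-diagonal equations $\sum_l m_{il}m_{lj}=0$ into the termwise constraints $m_{il}m_{lj}=0$, and then read off the $1\times 1$ and $2\times 2$ blocks from the resulting support structure. The only cosmetic difference is that the paper peels off one block at a time by induction on $n$, whereas you classify all indices simultaneously via the pairing $i\leftrightarrow l(i)$; the underlying reasoning is identical.
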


\begin{proof}
We proceed by induction on $n$. If $n=1$ the claim is obvious.
Let $M=(m_{ij})_{i,j=1,\dots,n}$
and $M^2=(k_{ij})_{i,j=1,\dots,n}$. If $m_{11}\neq 0$, then 
$m_{1j}=0$, $j=2,\dots,n$, for otherwise $k_{1j}\neq 0$
(as all our entries are in $\mathbb{Z}_+$). Similarly 
$m_{j1}=0$, $j=2,\dots,n$. This means that $M$ is a direct sum of
the block $(m_{11})$, where $m_{11}^2=\mathtt{k}$, 
and a matrix $\hat{M}$ of size $n-1\times n-1$ 
satisfying $\hat{M}^2=\mathtt{k}\cdot \mathbf{1}_{n-1}$.
The claim now follows from the inductive assumption.

If $m_{11}= 0$ then, since $k_{11}=\mathtt{k}$, there exists 
$j\in\{2,3,\dots,n\}$ such that $m_{1j}\neq 0$ and $m_{j1}\neq 0$. 
Substituting $M$ by $SMS^{-1}$, where $S$ is the transposition of
$j$ and $2$, we may assume $j=2$. Then from $k_{1j}=k_{j1}=0$
for all $j\neq 1$, and $k_{2j}=k_{j2}=0$ for all $j\neq 2$, it
follows that $m_{1j}=m_{j1}=0$ for all $j\neq 2$ and
$m_{2j}=m_{j2}=0$ for all $j\neq 1$. This means that $M$ is a direct 
sum of the block 
$\left(\begin{array}{cc}0&m_{1,2}\\m_{2,1}&0\end{array}\right)$, 
where $m_{12}m_{21}=\mathtt{k}$, and a matrix $\hat{M}$ of
size $n-2\times n-2$ satisfying $\hat{M}^2=\mathtt{k}\cdot \mathbf{1}_{n-2}$.
Again, the claim now follows from the inductive assumption. This completes
the proof.
\end{proof}

\begin{lemma}\label{lem12}
Assume that $\mathrm{F}$ is an
endofunctor on $A\text{-}\mathrm{mod}$ satisfying  \eqref{eq4}. Then  
$\mathrm{F}$ preserves the full subcategory $\mathcal{S}$ of 
$A\text{-}\mathrm{mod}$, which consists of semisimple $A$-modules.
\end{lemma}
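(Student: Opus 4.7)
The plan is to reduce the lemma to the statement that $\mathrm{F}L_i$ is semisimple for every simple module $L_i$; the general claim will then follow immediately from additivity of $\mathrm{F}$. The essential idea is to exploit the rigidity of the relation $\mathrm{F}^{2}\cong \mathtt{k}\mathrm{ID}$ by applying $\mathrm{F}$ twice to the semisimple top of $\mathrm{F}L_i$.

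Fix a simple $L_i$ and let $T:=\mathrm{F}L_i/\mathrm{rad}(\mathrm{F}L_i)$ be its semisimple top. Applying the exact functor $\mathrm{F}$ to the canonical surjection $\mathrm{F}L_i\twoheadrightarrow T$ yields a surjection $\mathrm{F}^{2}L_i\twoheadrightarrow\mathrm{F}T$. Since $\mathrm{F}^{2}L_i\cong \mathtt{k}L_i$ is isotypic semisimple of type $L_i$, every quotient of it is isomorphic to $\mathtt{t}L_i$ for some integer $0\le\mathtt{t}\le\mathtt{k}$; in particular $\mathrm{F}T\cong \mathtt{t}L_i$.

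Applying $\mathrm{F}$ once more gives
\begin{equation*}
\mathtt{k}T\;\cong\;\mathrm{F}^{2}T\;\cong\;\mathrm{F}(\mathtt{t}L_i)\;\cong\;\mathtt{t}\cdot\mathrm{F}L_i.
\end{equation*}
The left hand side is semisimple, so $\mathtt{t}\cdot\mathrm{F}L_i$ is semisimple. If $\mathtt{t}\ge 1$, Krull--Schmidt forces every indecomposable summand of $\mathrm{F}L_i$ to be simple, whence $\mathrm{F}L_i$ is semisimple. If $\mathtt{t}=0$, then $\mathtt{k}T=0$, hence $T=0$, which forces $\mathrm{F}L_i=0$ by Nakayama---trivially semisimple.

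I do not anticipate a serious obstacle: the proof is a short exploitation of the rigidity of $\mathrm{F}^{2}\cong \mathtt{k}\mathrm{ID}$, which transports any semisimple quotient of $\mathrm{F}L_i$ to a submodule of $\mathtt{k}L_i$ and then, after one more application of $\mathrm{F}$, back to a multiple of $\mathrm{F}L_i$ itself. The only mild subtleties are the Krull--Schmidt cancellation at the final step and the degenerate case $\mathtt{t}=0$, both of which are entirely routine. Implicit throughout is the exactness of $\mathrm{F}$, which is part of the standard categorification setup of the paper.
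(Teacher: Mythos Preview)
Your argument is correct and takes a genuinely different route from the paper's. The paper first invokes Lemma~\ref{lem8} on the matrix $M_{\mathrm F}$ to conclude that $[\mathrm F L_i]=a[L_j]$ for a single index $j$ (and $[\mathrm F L_j]=b[L_i]$ with $ab=\mathtt{k}$); knowing that $\mathrm F L_j$ has only $L_i$ as composition factor, it picks an inclusion $L_i\hookrightarrow \mathrm F L_j$, applies $\mathrm F$, and embeds $\mathrm F L_i$ into $\mathrm F^2 L_j\cong \mathtt{k}L_j$, which is semisimple. Your proof bypasses Lemma~\ref{lem8} entirely: you work with the semisimple top $T$ of $\mathrm F L_i$, push it through $\mathrm F$ twice, and use $\mathrm F^2\cong \mathtt{k}\,\mathrm{ID}$ to identify $\mathtt{t}\cdot\mathrm F L_i$ with $\mathtt{k}T$. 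This is more self-contained and arguably cleaner; the trade-off is that the paper's approach also extracts the finer information that $\mathrm F L_i$ is \emph{isotypic} of a specific type $L_j$, which it uses immediately afterwards in the proof of Theorem~\ref{thm7}\eqref{thm7.1}. Both arguments rely on exactness of $\mathrm F$ in exactly the same way (the paper to preserve an inclusion, you to preserve a surjection), and you are right that this is implicit in the surrounding setup even though the lemma statement does not say it.
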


\begin{proof}
As $\mathrm{F}$ is additive, to prove the claim we have to show
that $\mathrm{F}$ sends simple modules to semisimple modules. Since
$\mathrm{F}$ satisfies \eqref{eq4}, the matrix $M_{\mathrm{F}}$
satisfies $M_{\mathrm{F}}^2=\mathtt{k}\cdot \mathbf{1}_n$ and hence
is described by Lemma~\ref{lem8}. From the latter lemma 
it follows that for any $i\in\{1,2,\dots,n\}$ we 
have $[\mathrm{F}L_i]=a[L_j]$ for some $j\in\{1,2,\dots,n\}$ and
$a\in \mathbb{N}$, and, moreover, $[\mathrm{F}L_j]=b[L_i]$
for some $b\in\mathbb{N}$ such that $ab=\mathtt{k}$. Applying
$\mathrm{F}$ to any inclusion $L_i\hookrightarrow \mathrm{F}L_j$
we get $\mathrm{F}L_i \hookrightarrow \mathrm{F}\mathrm{F}L_j$.
However, $\mathrm{F}\mathrm{F}L_j\overset{\eqref{eq4}}{\cong} 
\mathtt{k}L_j$  is a semisimple module. Therefore $\mathrm{F}L_i$, 
being a submodule of a semisimple module, is semisimple itself.
\end{proof}

\begin{proof}[Proof of Theorem~\ref{thm7}\eqref{thm7.1}.]
If $\mathtt{k}=\mathtt{m}^2$ for some $\mathtt{m}\in\{2,3,4,\dots\}$,
then $\mathrm{F}=\mathtt{m}\mathrm{ID}$ is a selfadjoint functor
satisfying \eqref{eq4}. Hence to prove 
Theorem~\ref{thm7}\eqref{thm7.1} we have to show that  in the case
$\mathtt{k}\neq \mathtt{m}^2$ for any $\mathtt{m}\in\{2,3,4,\dots\}$
no selfadjoint $\mathrm{F}$ satisfies \eqref{eq4}.

In the latter case let us assume that $\mathrm{F}$ is a selfadjoint
endofunctor on $A\text{-}\mathrm{mod}$ satisfying \eqref{eq4}.
From Lemma~\ref{lem8} we have that, after a reordering
of simple modules, the matrix $M_{\mathrm{F}}$ becomes a direct sum of
matrices of the form $\left(\begin{array}{cc}0&a\\b&0\end{array}\right)$,
where $ab=\mathtt{k}$ and $a\neq b$. In particular, we have 
$[\mathrm{F}L_1]=\mathtt{b}[L_2]$ and $[\mathrm{F}L_2]=\mathtt{a}[L_1]$ 
for some $\mathtt{a},\mathtt{b}\in\mathbb{N}$ such that 
$\mathtt{a}\mathtt{b}=\mathtt{k}$ and $\mathtt{a}\neq \mathtt{b}$.
By Lemma~\ref{lem12}, we even have $\mathrm{F}L_1\cong \mathtt{b}L_2$ and 
$\mathrm{F}L_2\cong \mathtt{a}L_1$. Using this and the selfadjointness
of $\mathrm{F}$, we have:
\begin{displaymath}
\begin{array}{rcl} 
\mathtt{b}&=&\dim \mathrm{Hom}_A(\mathtt{b}L_2,L_2)\\
&=&\dim \mathrm{Hom}_A(\mathrm{F}L_1,L_2)\\
&=&\dim \mathrm{Hom}_A(L_1,\mathrm{F}L_2)\\
&=&\dim \mathrm{Hom}_A(L_1,\mathtt{a}L_1)\\
&=&\mathtt{a},
\end{array}
\end{displaymath}
a contradiction. The claim of Theorem~\ref{thm7}\eqref{thm7.1} follows.
\end{proof}

\begin{proof}[Proof of Theorem~\ref{thm7}\eqref{thm7.2}.]
This proof is inspired by \cite{LS}. Assume that 
$\mathtt{k}=\mathtt{m}^2$ for some $\mathtt{m}\in\{2,3,4,\dots\}$.
If $\mathrm{F}'$ is a selfadjoint endofunctor on 
$A\text{-}\mathrm{mod}$  satisfying \eqref{eq1}, then 
$\mathtt{m}\mathrm{F}'$ is a selfadjoint endofunctor on 
$A\text{-}\mathrm{mod}$  satisfying \eqref{eq4}. Hence to prove
Theorem~\ref{thm7}\eqref{thm7.2} we have to establish the
converse statement.

Let $\mathrm{F}$ be some selfadjoint endofunctor on
$A\text{-}\mathrm{mod}$ satisfying \eqref{eq4}. 
Our strategy of the proof is as follows: we would like to
show that the functor $\mathrm{F}$ decomposes into a direct sum
of $\mathtt{m}$ nontrivial functors and then use the results from
Section~\ref{s2} to get that these functors have the required form.
To prove decomposability of $\mathrm{F}$ we produce $\mathtt{m}$
orthogonal idempotents in the endomorphism ring of $\mathrm{F}$.
For this we first show that the necessary idempotents exist in the
case of a semisimple algebra, and then use lifting of idempotents
modulo the radical. All the above requires some preparation and
technical work. 

From Lemma~\ref{lem8} and the above proof of 
Theorem~\ref{thm7}\eqref{thm7.1} it follows that, re-indexing,
if necessary, simple $A$-modules, the matrix $M_{\mathrm{F}}$
reduces to a direct sum of the blocks  
\begin{equation}\label{eq7}
\left(\begin{array}{cc}0&\mathtt{m}\\\mathtt{m}&0\end{array}\right)
\quad\quad \text{ and/or }\quad\quad \left(\mathtt{m}\right). 
\end{equation}

\begin{lemma}\label{lem11}
The claim of Theorem~\ref{thm7}\eqref{thm7.2} is true in the
case of a semisimple algebra $A$.
\end{lemma}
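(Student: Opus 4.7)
The plan is to combine the matrix analysis from Lemma~\ref{lem8} and the proof of Theorem~\ref{thm7}\eqref{thm7.1} with the fact that in a semisimple basic category every exact $\Bbbk$-linear endofunctor is determined, up to isomorphism, by its action on the Grothendieck group. Concretely, since $\Bbbk$ is algebraically closed and $A$ basic and semisimple, $A \cong \Bbbk^n$, every simple $L_i$ is one-dimensional, and any object is semisimple; hence two $\Bbbk$-linear exact endofunctors $\mathrm{G}_1, \mathrm{G}_2$ are isomorphic if and only if $[\mathrm{G}_1 L_i] = [\mathrm{G}_2 L_i]$ for all $i$, i.e. if and only if $M_{\mathrm{G}_1} = M_{\mathrm{G}_2}$.

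First I would combine Lemma~\ref{lem8} with the computation at the end of the proof of \eqref{thm7.1}: the selfadjointness forces each $2\times 2$ block to satisfy $a=b$, so the off-diagonal entries are $\mathtt{m}$, and the diagonal blocks are $(\mathtt{m})$. After permuting the indexing of the simples, this means $M_{\mathrm{F}} = \mathtt{m}\cdot P_\sigma$, where $P_\sigma$ is the permutation matrix of an involution $\sigma \in S_n$ (a product of disjoint transpositions together with fixed points).

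Next, I would define $\mathrm{F}' := \mathrm{F}_\sigma$, the functor twisting the $A$-action by the algebra automorphism of $A\cong\Bbbk^n$ induced by $\sigma$ (as in Proposition~\ref{prop2}\eqref{prop2.1}). Since $\sigma^2 = \mathrm{id}$ and $\mathrm{Inn}(A)$ is trivial for $A \cong \Bbbk^n$, Proposition~\ref{prop2}\eqref{prop2.4} gives that $\mathrm{F}'$ is selfadjoint, and Corollary~\ref{cor5} gives $\mathrm{F}' \circ \mathrm{F}' \cong \mathrm{ID}$. The matrix of $\mathtt{m}\mathrm{F}'$ is $\mathtt{m}\cdot P_\sigma = M_{\mathrm{F}}$, so by the semisimplicity remark above, $\mathrm{F} \cong \mathtt{m}\mathrm{F}'$. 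This establishes that every selfadjoint $\mathrm{F}$ satisfying \eqref{eq4} arises from some $\mathrm{F}'$ satisfying \eqref{eq1} via the assignment $\mathrm{F}' \mapsto \mathtt{m}\mathrm{F}'$.

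For the bijection, injectivity of $\mathrm{F}' \mapsto \mathtt{m}\mathrm{F}'$ is immediate: if $\mathtt{m}\mathrm{F}'_1 \cong \mathtt{m}\mathrm{F}'_2$, then $\mathtt{m}\cdot M_{\mathrm{F}'_1} = \mathtt{m}\cdot M_{\mathrm{F}'_2}$, so $M_{\mathrm{F}'_1} = M_{\mathrm{F}'_2}$ and hence $\mathrm{F}'_1 \cong \mathrm{F}'_2$ by the semisimple rigidity. There is no serious obstacle in this lemma; the only subtle point is the implicit use of the fact that in the semisimple situation additive $\Bbbk$-linear functors between categories of the form $\prod \mathrm{Vect}_\Bbbk$ are classified by their multiplicity matrix, so once $M_{\mathrm{F}}$ is pinned down by Lemma~\ref{lem8} the rest is bookkeeping.
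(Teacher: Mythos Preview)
Your argument is correct and follows essentially the same line as the paper's proof: both exploit that for $A\cong\Bbbk^n$ an exact $\Bbbk$-linear endofunctor is determined up to isomorphism by its matrix, and both use the block form \eqref{eq7} (equivalently your $M_{\mathrm{F}}=\mathtt{m}P_\sigma$) to produce $\mathrm{F}'$ with $\mathrm{F}\cong\mathtt{m}\mathrm{F}'$. The only cosmetic difference is that the paper builds $\mathrm{F}'$ explicitly as a direct sum of identity functors between the $\Bbbk\text{-}\mathrm{mod}$ summands, whereas you package the same object as the twist $\mathrm{F}_\sigma$ and appeal to Proposition~\ref{prop2}; these are the same functor, so nothing new is needed.
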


\begin{proof}
Assume first that $A$ is semisimple (and basic). 
Then $A\cong \oplus_{i=1}^n\Bbbk$ 
and $A\text{-}\mathrm{mod}\cong \oplus_{i=1}^n\Bbbk\text{-}\mathrm{mod}$.
The only (up to isomorphism) indecomposable nonzero functor from
$\Bbbk\text{-}\mathrm{mod}$ to $\Bbbk\text{-}\mathrm{mod}$ is the
identity functor (as $\Bbbk\otimes \Bbbk\cong\Bbbk$). Therefore from
\eqref{eq7} we get that $\mathrm{F}$ is isomorphic to a direct sum of
functors of the form
\begin{displaymath}
\xymatrix{
\Bbbk\text{-}\mathrm{mod}\ar@/^/[rr]^{\mathtt{m}\mathrm{ID}}&&
\Bbbk\text{-}\mathrm{mod}\ar@/^/[ll]^{\mathtt{m}\mathrm{ID}}
} 
\quad\quad\text{ and/or }\quad\quad
\xymatrix{
 \ar@(ul,ur)[]^{\mathtt{m}\mathrm{ID}} 
\Bbbk\text{-}\mathrm{mod}
}
\end{displaymath}
(corresponding to the blocks from \eqref{eq7}). Define $\mathrm{F}'$ 
as the corresponding direct sum of functors of the form
\begin{displaymath}
\xymatrix{
\Bbbk\text{-}\mathrm{mod}\ar@/^/[rr]^{\mathrm{ID}}&&
\Bbbk\text{-}\mathrm{mod}\ar@/^/[ll]^{\mathrm{ID}}
} 
\quad\quad\text{ and/or }\quad\quad
\xymatrix{
 \ar@(ul,ur)[]^{\mathrm{ID}} 
\Bbbk\text{-}\mathrm{mod}.
}
\end{displaymath}
Then $\mathrm{F}'$ is selfadjoint and obviously satisfies \eqref{eq1},
moreover,  $\mathrm{F}\cong \mathtt{m}\mathrm{F}'$. This proves 
Theorem~\ref{thm7}\eqref{thm7.2} in the case of a semisimple
algebra $A$.
\end{proof}

Let $V$ be an $A\text{-}A$-bimodule such that $\mathrm{F}\cong
V\otimes_{A}{}_-$.  The right adjoint of $\mathrm{F}$ is $\mathrm{F}$ 
itself, in particular, this right adjoint is an exact functor
and hence is given by tensoring with 
the bimodule $\mathrm{Hom}_{A-}(V,A)$ (that is 
$V\cong \mathrm{Hom}_{A-}(V,A)$). The bimodule $V$ is projective both 
as a right $A$-module (as $\mathrm{F}$ is exact) and as a left $A$-module
(as $\mathrm{F}$ sends projective modules to projective modules). Hence
we have an isomorphism of $A\text{-}A$-bimodules as follows:
\begin{displaymath}
\begin{array}{ccc}
\mathrm{Hom}_{A-}(V,A)\otimes_A V & \cong & \mathrm{Hom}_{A-}(V,V)\\
f\otimes v & \mapsto & (w\mapsto f(w)v).
\end{array}
\end{displaymath}
This gives us the following isomorphism of $A\text{-}A$-bimodules:
\begin{displaymath}
V\otimes_A V \cong \mathrm{Hom}_{A-}(V,A)\otimes_A V
\cong  \mathrm{Hom}_{A-}(V,V).
\end{displaymath}
Note that the functor $\mathrm{F}\circ\mathrm{F}$ is given by
tensoring with the bimodule $V\otimes_A V\cong \mathrm{Hom}_{A-}(V,V)$. 
From \eqref{eq4} we thus get an isomorphism 
\begin{equation}\label{eq8}
\mathrm{Hom}_{A-}(V,V)\cong \mathtt{k}A
\end{equation}
of $A$-bimodules. Taking on both sides of the latter isomorphism elements 
on which the left and the right actions of $A$ coincide, we get an 
isomorphism
\begin{equation}\label{eq9}
\mathrm{Hom}_{A-A}(V,V)\cong \mathtt{k}Z(A)
\end{equation}
of $Z(A)$-bimodules.

Let $R_l$ and $R_r$ denote the radical of $V$, considered as a left and
as a right $A$-module, respectively. As $V\otimes_{A}{}_-$ sends simple
modules to semisimple modules (Lemma~\ref{lem12}), it follows that 
$R_l=R_r=:R$.

From \eqref{eq7} we have that the matrix $M_{\mathrm{F}}$ is symmetric.
Hence $N_{\mathrm{F}}=M_{\mathrm{F}}$ (Lemma~\ref{lem9}). Therefore from
\eqref{eq7} it follows that each indecomposable projective module
occurs in $\mathrm{F}{}_AA$ with multiplicity $\mathtt{m}$, that is
$\mathrm{F}{}_AA\cong \mathtt{m}\, {}_AA$. Using decomposition
\eqref{eq8} we thus can choose a basis $\{b_i:i=1,\dots,\mathtt{k}\}$ 
of $\mathrm{Hom}_{A-}(V,V)$ as a free left $A$-module such that the
left and the right actions of $A$ on the elements of this basis
coincide. Then all $b_i$'s belong to $\mathrm{Hom}_{A-A}(V,V)$ and form
there a basis as a free $Z(A)$-module (both left and right).

\begin{lemma}\label{lem14}
Let $L=\oplus_{i=1}^nL_i$ and $b$ be a nontrivial $\Bbbk$-linear 
combination of $b_i$'s. Then $b$ is a natural transformation from 
$\mathrm{F}$ to $\mathrm{F}$ and the induced endomorphism $b_{L}$ 
of the $A$-module $\mathrm{F}L$ is nonzero.
\end{lemma}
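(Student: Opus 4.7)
My plan is to dispatch the first assertion in one line, then translate the second into a linear-algebra question about $\mathtt{k}A$ via the isomorphism \eqref{eq8}. For the first: the $\Bbbk$-linear combination $b=\sum_i c_i b_i$ is an $A$-$A$-bimodule endomorphism of $V$ (since each $b_i$ is, by the discussion preceding the lemma), and tensoring over $A$ turns $b$ into a natural transformation $\mathrm{F}\to\mathrm{F}$. For the second, I will first identify $\mathrm{F}L=V\otimes_A(A/\mathrm{rad}(A))\cong V/V\mathrm{rad}(A)=V/R$, under which $b_L$ becomes the induced map $v+R\mapsto b(v)+R$. Then $b_L=0$ is equivalent to $\pi\circ b=0$, where $\pi\colon V\to V/R$ is the canonical projection, so the task reduces to showing $\pi\circ b\neq 0$ for any nonzero $\Bbbk$-linear combination $b$.

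Next I will pass to $\mathtt{k}A$ via \eqref{eq8} and identify, using the self-duality $V\cong\mathrm{Hom}_{A-}(V,A)$ together with tensor-hom, the post-composition map $\mathrm{Hom}_{A-}(V,V)\to\mathrm{Hom}_{A-}(V,V/R)$ with the componentwise quotient $\mathtt{k}A\to\mathtt{k}(A/\mathrm{rad}(A))=\mathtt{k}L$. By the setup preceding the lemma, the $b_i$ form a free $Z(A)$-module basis of $\mathrm{Hom}_{A-A}(V,V)\cong\mathtt{k}Z(A)$, so under \eqref{eq8} they lie in $\mathtt{k}Z(A)\subseteq\mathtt{k}A$ and form a $Z(A)$-basis there.

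The conclusion will then follow from Nakayama's lemma: restricting the componentwise quotient to $\mathtt{k}Z(A)$ yields a surjection onto $\mathtt{k}(Z(A)/\mathrm{rad}(Z(A)))$, and the images $\bar b_i$ form a basis of the latter as a free $(Z(A)/\mathrm{rad}(Z(A)))$-module. In particular they are $\Bbbk$-linearly independent (since $\Bbbk$ embeds in $Z(A)/\mathrm{rad}(Z(A))$), so any nontrivial $\Bbbk$-linear combination $b=\sum c_i b_i$ maps to the nonzero element $\sum c_i\bar b_i$, forcing $\pi\circ b\neq 0$ and hence $b_L\neq 0$. I expect the main technical hurdle to be the identification in the second paragraph: one has to thread the canonical bimodule isomorphisms carefully and verify that the right $A$-action on $\mathrm{Hom}_{A-}(V,V)$ underlying \eqref{eq8} corresponds to the diagonal right multiplication on $\mathtt{k}A$, so that killing $V\mathrm{rad}(A)$ on the $V$-side really becomes componentwise quotienting by $\mathrm{rad}(A)$ on $\mathtt{k}A$. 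Once that is in place, the remaining ingredients (self-duality and Nakayama) are entirely standard.
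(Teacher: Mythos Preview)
Your proposal is correct and takes essentially the same approach as the paper: both reduce to showing that the bimodule map $b$ does not send $V$ into $R$, and then deduce $b_L\neq 0$. You are more explicit than the paper (which asserts the key step ``by the definition of $b$'' and then passes to $b_L$ via tensor--hom adjunction, whereas you identify $b_L$ directly as the induced map on $V/R$ and justify the key step by transporting through~\eqref{eq8}); your detour via $\mathtt{k}Z(A)$ and Nakayama is fine, though it can be shortened by using directly that the $b_i$ form a free left $A$-basis of $\mathrm{Hom}_{A-}(V,V)\cong\mathtt{k}A$, so their images modulo $\mathtt{k}\,\mathrm{Rad}(A)$ are already $\Bbbk$-linearly independent.
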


\begin{proof}
Applying the exact functor $V\otimes_A{}_-$ to the short exact sequence
\begin{displaymath}
0\to\mathrm{Rad}(A)\to  {}_AA\to L\to 0,
\end{displaymath}
we obtain the short exact sequence
\begin{displaymath}
0\to V\otimes_A\mathrm{Rad}(A)\to  V\to V\otimes_A L\to 0.
\end{displaymath}
Note that $V\otimes_A\mathrm{Rad}(A)=R$.
Applying to the latter sequence
the exact functor $\mathrm{Hom}_{A-}(V,{}_-)$
we obtain the short exact sequence
\begin{displaymath}
0\to \mathrm{Hom}_{A-}(V,R)\to  
\mathrm{Hom}_{A-}(V,V)\to \mathrm{Hom}_{A-}(V,V\otimes_A L)\to 0.
\end{displaymath}
By the definition of $b$, the image of $b\in \mathrm{Hom}_{A-}(V,V)$ 
does not belong to $R$ and hence $b$ induces a 
nonzero element $\overline{b}\in\mathrm{Hom}_{A-}(V,V\otimes_A L)$. By 
adjunction we have the following isomorphism:
\begin{displaymath}
\mathrm{Hom}_{A}(L,\mathrm{Hom}_{A-}(V,V\otimes_A L))
\cong
\mathrm{Hom}_{A}(V\otimes_A L,V\otimes_A L),
\end{displaymath}
which produces the nonzero endomorphism $b_L$ of $V\otimes_A L$ from 
our nonzero element $\overline{b}$. The claim follows.
\end{proof}

Set $\overline{A}=A/\mathrm{Rad}(A)$, then $\overline{A}$ is a semisimple
algebra. The bimodule $\overline{V}=V/R$ is an $\overline{A}$-bimodule
satisfying \eqref{eq4} and we have  $\mathrm{Rad}(A)\overline{V}=\overline{V}\mathrm{Rad}(A)=0$. Hence
we have the quotient homomorphism of algebras as follows:
\begin{displaymath}
\Phi:\mathrm{Hom}_{A-A}(V,V)\to 
\mathrm{Hom}_{\overline{A}-\overline{A}}(\overline{V},\overline{V}). 
\end{displaymath}
Note that the algebra $\overline{A}=Z(\overline{A})\cong n\Bbbk$ 
contains as a subalgebra the algebra $\overline{Z(A)}:= 
Z(A)/\mathrm{Rad}(Z(A))$. The algebra $\overline{Z(A)}$ is isomorphic
to $k\Bbbk$, where $k$ is the number of connected components of
the algebra $A$.  In particular, $\overline{Z(A)}\cong \overline{A}$
if $A$ is a direct sum of local algebras.

\begin{lemma}\label{lem15}
The kernel of $\Phi$ is the radical of $\mathrm{Hom}_{A-A}(V,V)$
and the image of $\Phi$ is isomorphic to the algebra 
$\mathrm{Mat}_{\mathtt{m}\times \mathtt{m}}(\overline{Z(A)})$.
\end{lemma}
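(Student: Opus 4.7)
The plan is to identify the image of $\Phi$ first, then deduce the statement about the kernel by dimension counting.

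First, I would establish that $\mathrm{Rad}(Z(A))\cdot H \subseteq \ker \Phi$, where $H=\mathrm{Hom}_{A-A}(V,V)$. This uses the standard fact that $\mathrm{Rad}(Z(A))\subseteq \mathrm{Rad}(A)$ (any nilpotent central ideal of $A$ lies in the Jacobson radical), so elements of $\mathrm{Rad}(Z(A))$ annihilate $\overline{V}$. Since $Z(A)$ is central in $H$ and $\mathrm{Rad}(Z(A))$ is nilpotent, this ideal is nilpotent as well. Using the $Z(A)$-bimodule isomorphism \eqref{eq9}, the quotient $H/\mathrm{Rad}(Z(A))\cdot H$ has $\Bbbk$-dimension $\mathtt{k}\cdot k = \mathtt{m}^2 k$, where $k=\dim\overline{Z(A)}$ is the number of connected components of $A$.

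Next, I would identify the target of $\Phi$. The isomorphism \eqref{eq8} descends modulo $\mathrm{Rad}(A)$ to $\overline{V}\otimes_{\overline{A}}\overline{V}\cong \mathtt{k}\overline{A}$, so Lemma~\ref{lem11} applied to the semisimple algebra $\overline{A}$ yields $\overline{V}\cong \mathtt{m}V'$ as $\overline{A}$-bimodules, for some $V'$ satisfying $V'\otimes_{\overline{A}}V'\cong\overline{A}$. By Proposition~\ref{prop2}, $V'\cong {}_{\varphi}\overline{A}$ for some automorphism $\varphi$ of $\overline{A}$, and since $\overline{A}$ is commutative a direct check gives $\mathrm{End}_{\overline{A}-\overline{A}}({}_{\varphi}\overline{A})\cong Z(\overline{A})=\overline{A}$. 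Hence
\[
\mathrm{End}_{\overline{A}-\overline{A}}(\overline{V})\cong\mathrm{Mat}_{\mathtt{m}}(\overline{A}),
\]
and the image of $\Phi$ sits inside this matrix algebra.

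The key step is to show the image is isomorphic to $\mathrm{Mat}_{\mathtt{m}}(\overline{Z(A)})$. The image contains a copy of $\overline{Z(A)}$ (embedded as scalar matrices via the central action of $Z(A)$ on $V$, which factors through $\overline{Z(A)}\hookrightarrow \overline{A}$), and it contains the elements $\tilde{b}_i=\Phi(b_i)$, which are $\Bbbk$-linearly independent by Lemma~\ref{lem14}. The main obstacle is upgrading $\Bbbk$-linear independence of the $\tilde{b}_i$ to $\overline{Z(A)}$-linear independence. I would handle this by decomposing along the primitive central idempotents $e_\alpha$ of $\overline{Z(A)}$ (lifted to $Z(A)$): localizing at $e_\alpha$ effectively reduces to a single connected component of $A$ (or a pair of components interchanged by $\mathrm{F}$), where Lemma~\ref{lem14} gives $\Bbbk$-linear independence of the restrictions $\tilde{b}_i|_{e_\alpha\overline{V}}$. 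Summing over $\alpha$, the image acquires $\Bbbk$-dimension exactly $\mathtt{m}^2 k$, and the block-wise copies of $\mathrm{Mat}_{\mathtt{m}}(\Bbbk)$ (one for each connected component) assemble into $\mathrm{Mat}_{\mathtt{m}}(\overline{Z(A)})$.

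Finally, the kernel claim follows. Since $\dim\mathrm{Im}(\Phi)=\mathtt{m}^2 k=\dim H/\mathrm{Rad}(Z(A))\cdot H$, the inclusion from the first step is forced to be an equality: $\ker\Phi = \mathrm{Rad}(Z(A))\cdot H$. Being nilpotent, this gives $\ker\Phi\subseteq\mathrm{Rad}(H)$. On the other hand, since the image is isomorphic to the semisimple algebra $\mathrm{Mat}_{\mathtt{m}}(\overline{Z(A)})$, we obtain $\mathrm{Rad}(H)\subseteq\ker\Phi$. Hence $\ker\Phi=\mathrm{Rad}(H)$ as claimed. The hardest step of this plan is the block-wise application of Lemma~\ref{lem14}, which requires carefully tracking how the central idempotents of $Z(A)$ interact with the bimodule $V$ when $\mathrm{F}$ does or does not preserve the block decomposition of $A$.
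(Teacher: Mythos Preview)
Your proposal is correct and follows essentially the same strategy as the paper: establish the image first, then deduce the kernel by dimension counting, using Lemma~\ref{lem14} (applied over the blocks of $\overline{Z(A)}$) to pin down the dimension of the image and Lemma~\ref{lem11} to identify the ambient matrix algebra. The paper compresses your block-wise analysis into the phrase ``similarly to the proof of Lemma~\ref{lem14} one shows that the image $X$ of $\Phi$ has dimension $\mathtt{k}\cdot\dim\overline{Z(A)}$ and is a subalgebra of $\mathrm{End}_{\overline{A}}(\overline{V}\otimes_{\overline{A}}\overline{A})$ which corresponds to the embedding $\overline{Z(A)}\subset\overline{A}$'', so your explicit treatment via central idempotents is exactly what is hidden behind that sentence; your invocation of Proposition~\ref{prop2} to realise $V'$ as a twisted bimodule is an extra detail the paper omits but is consistent with its argument.
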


\begin{proof}
The space $\mathrm{Hom}_{A-A}(V,V)$ is a free left $Z(A)$-module
of rank $\mathtt{k}$ with the basis $\{b_i\}$ from the above.
Since $\overline{A}$ is semisimple, so is $\overline{V}$ and thus
both $\mathrm{Rad}(Z(A))$ and the radical of
$\mathrm{Hom}_{A-A}(V,V)$ annihilate $\overline{V}$ both from the
left and from the right. Therefore the first claim of the lemma 
follows from the second claim just by counting dimensions.

Similarly to the proof of Lemma~\ref{lem14} one shows that the image $X$ 
of  $\Phi$ has dimension $\mathtt{k}\cdot \dim\overline{Z(A)}$
and is a subalgebra of 
$\mathrm{End}_{\overline{A}}(\overline{V}\otimes_{\overline{A}}
\overline{A})$, which corresponds to the embedding
$\overline{Z(A)}\subset \overline{A}$ (the algebra
$\mathrm{End}_{\overline{A}}(\overline{V}\otimes_{\overline{A}}
\overline{A})$ is free both as a left and as a right
$\overline{A}$-module).
By Lemma~\ref{lem11} we have an isomorphism 
$\overline{V}\otimes_{\overline{A}}\overline{A}\cong
\overline{V}\cong \mathtt{m}\overline{A}$ of $\overline{A}$-modules.
Hence the algebra $X$ is a subalgebra of the algebra  
\begin{displaymath}
\mathrm{End}_{\overline{A}}(\mathtt{m}\overline{A}) \cong
\mathrm{Mat}_{\mathtt{m}\times \mathtt{m}}(\overline{A}),
\end{displaymath}
which corresponds to the embedding $\overline{Z(A)}\subset \overline{A}$.
This means that 
$X\cong \mathrm{Mat}_{\mathtt{m}\times \mathtt{m}}(\overline{Z(A)})$.
\end{proof}

We have 
$\mathrm{Mat}_{\mathtt{m}\times \mathtt{m}}(\overline{Z(A)})
\cong  \mathrm{Mat}_{\mathtt{m}\times \mathtt{m}}(\Bbbk)\otimes
\overline{Z(A)}$.
Let $e_i$, $i=1,\dots,n$, denote the usual primitive diagonal idempotents 
of $\mathrm{Mat}_{\mathtt{m}\times \mathtt{m}}(\Bbbk)$ such that 
$\sum_i e_i$ is the identity matrix. By Lemma~\ref{lem15} we can lift 
the idempotents $e_i\otimes 1$ from 
$\mathrm{Mat}_{\mathtt{m}\times \mathtt{m}}(\Bbbk)
\otimes \overline{Z(A)}$ to $\mathrm{Hom}_{A-A}(V,V)$ modulo 
the radical (see e.g. \cite[3.6]{La}). Thus we obtain 
$\mathtt{m}$ orthogonal idempotents in $\mathrm{Hom}_{A-A}(V,V)$,
which implies the existence of a decomposition
\begin{displaymath}
\mathrm{F}=\mathrm{F}_1\oplus \mathrm{F}_2\oplus \dots
\oplus \mathrm{F}_{\mathtt{m}}
\end{displaymath}
for the functor $\mathrm{F}$. As $\overline{Z(A)}$ is a unital subalgebra 
of $\overline{A}$, we have an isomorphism 
$(e_i\otimes 1)\overline{A}\cong \overline{A}$ of left
$\overline{A}$-modules. Hence, it follows that
\begin{equation}\label{eq15}
\mathrm{F}_i L\cong L\quad \text{ for all }\quad i\in\{1,2,\dots,n\}.
\end{equation}

From \eqref{eq4} we have $\sum_{i,j}\mathrm{F}_i\circ\mathrm{F}_j\cong
\mathtt{k}\mathrm{ID}$. From \eqref{eq15} and the Krull-Schmidt theorem
it follows that $\mathrm{F}_i\circ\mathrm{F}_j\cong\mathrm{ID}$
for every $i$ and $j$. In particular, 
$\mathrm{F}_i\circ\mathrm{F}_i\cong\mathrm{ID}$, which yields that
every $\mathrm{F}_i$ is selfadjoint by 
Proposition~\ref{prop2}\eqref{prop2.4}.
Now the claim of Theorem~\ref{thm7}\eqref{thm7.2}
follows from Proposition~\ref{prop2}. This completes the proof.
\end{proof}

\section{External direct sums and tensor products}\label{s6}

To construct new solutions to functorial equations one may use
the classical constructions of external direct sums and tensor products.

We start with the construction of an external direct sum.
Let $g(x),h(x)\in\mathbb{Z}_+(x)$. Assume that for $i=1,2$ we have
a finite dimensional associative $\Bbbk$-algebra $A_i$ and 
a (selfadjoint) exact functor $\mathrm{F}_i$ on $A_i\text{-}\mathrm{mod}$
such that  $g(\mathrm{F}_i)\cong h(\mathrm{F}_i)$.
Set $A=A_1\oplus A_2$ and let $\mathrm{F}:=\mathrm{F}_1\boxplus
\mathrm{F}_2$ denote the external direct sum of $\mathrm{F}_1$
and $\mathrm{F}_2$ (it acts on $A$-modules componentwise).

\begin{proposition}\label{prop65}
The functor $\mathrm{F}$ is a (selfadjoint) exact endofunctor 
on $A\text{-}\mathrm{mod}$ satisfying $g(\mathrm{F})\cong h(\mathrm{F})$.
\end{proposition}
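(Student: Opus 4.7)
The plan is to exploit the fundamental block decomposition $A\text{-}\mathrm{mod} \cong A_1\text{-}\mathrm{mod} \times A_2\text{-}\mathrm{mod}$ coming from $A = A_1 \oplus A_2$, under which every $A$-module $M$ splits canonically as $M_1 \oplus M_2$ with $M_i \in A_i\text{-}\mathrm{mod}$, and every morphism splits accordingly (cross-component Hom-spaces vanish because the identities $1_{A_1}, 1_{A_2}$ act as complementary central idempotents). This reduces every claim about $\mathrm{F}$ to the corresponding claim about $\mathrm{F}_1$ and $\mathrm{F}_2$ handled coordinatewise.

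First I would verify that $\mathrm{F} = \mathrm{F}_1 \boxplus \mathrm{F}_2$ is well-defined as an additive, $\Bbbk$-linear endofunctor on $A\text{-}\mathrm{mod}$, acting by $(M_1 \oplus M_2) \mapsto \mathrm{F}_1 M_1 \oplus \mathrm{F}_2 M_2$, and that exactness follows immediately because a sequence in $A\text{-}\mathrm{mod}$ is exact iff each of its components is exact in $A_i\text{-}\mathrm{mod}$. For selfadjointness (when each $\mathrm{F}_i$ is selfadjoint), I would use the decomposition
\begin{displaymath}
\mathrm{Hom}_A(M_1 \oplus M_2, N_1 \oplus N_2) \cong \mathrm{Hom}_{A_1}(M_1, N_1) \oplus \mathrm{Hom}_{A_2}(M_2, N_2)
\end{displaymath}
and paste together the adjunction isomorphisms $\mathrm{Hom}_{A_i}(\mathrm{F}_i M_i, N_i) \cong \mathrm{Hom}_{A_i}(M_i, \mathrm{F}_i N_i)$ to obtain the required natural isomorphism $\mathrm{Hom}_A(\mathrm{F} M, N) \cong \mathrm{Hom}_A(M, \mathrm{F} N)$.

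The polynomial relation then reduces to a bookkeeping observation: for any polynomial $p(x) \in \mathbb{Z}_+[x]$, an easy induction on degree (using that $\mathrm{F}^k = \mathrm{F}_1^k \boxplus \mathrm{F}_2^k$ and that external direct sum commutes with the operation $\mathtt{k}(-)$ from \eqref{eq0}) yields $p(\mathrm{F}) \cong p(\mathrm{F}_1) \boxplus p(\mathrm{F}_2)$. Applying this to $g$ and $h$, the hypothesis $g(\mathrm{F}_i) \cong h(\mathrm{F}_i)$ for $i = 1,2$ gives
\begin{displaymath}
g(\mathrm{F}) \cong g(\mathrm{F}_1) \boxplus g(\mathrm{F}_2) \cong h(\mathrm{F}_1) \boxplus h(\mathrm{F}_2) \cong h(\mathrm{F}),
\end{displaymath}
completing the argument.

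There is no real obstacle here: the proposition is a verification that external direct sums respect the three properties (exactness, selfadjointness, polynomial relations) componentwise. The only point that deserves explicit mention is the vanishing of cross-component Hom-spaces, which is what makes both the adjunction and the identification $p(\mathrm{F}) \cong p(\mathrm{F}_1) \boxplus p(\mathrm{F}_2)$ work cleanly.
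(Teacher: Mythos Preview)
Your proposal is correct and follows exactly the same approach as the paper: the paper's proof is a single sentence observing that $\mathrm{F}$ acts componentwise, so all properties of $\mathrm{F}$ are inherited from the corresponding properties of the $\mathrm{F}_i$'s. You have simply spelled out in detail what the paper leaves implicit.
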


\begin{proof}
The action of $\mathrm{F}$ is computed componentwise and hence
properties of $\mathrm{F}$ follow from the corresponding properties of
the $\mathrm{F}_i$'s.
\end{proof}

The external tensor product works as follows:
Let $g(x),h(x)\in\mathbb{Z}_+(x)$. Assume that we have
a finite dimensional associative $\Bbbk$-algebra $A$ and 
a (selfadjoint) exact functor $\mathrm{F}$ on $A\text{-}\mathrm{mod}$
such that  $g(\mathrm{F})\cong h(\mathrm{F})$. Let $B$ be a
finite dimensional associative $\Bbbk$-algebra and $\mathrm{ID}_B$ 
denote the identity functor on $B\text{-}\mathrm{mod}$.
Consider the algebra $C=A\otimes B$. Then the external tensor product
$\mathrm{G}:=\mathrm{F}\boxtimes\mathrm{ID}_B$ is an 
exact endofunctor on $C\text{-}\mathrm{mod}$ defined as follows:
Any $X\in C\text{-}\mathrm{mod}$ can be considered as an $A$-module
with a fixed action of $B$ by endomorphisms $\psi_b$, $b\in B$. Then 
the $C$-module $\mathrm{G}X$ is defined as the $A$-module $\mathrm{F}X$ 
with the action of $B$ given by $\mathrm{F}\psi_b$. The action of
$\mathrm{G}$ on morphisms is defined in the natural way.

\begin{proposition}\label{prop66}
\begin{enumerate}[(i)]
\item\label{prop66.1} The functor $\mathrm{G}$ is selfadjoint if and only
if $\mathrm{F}$ is selfadjoint.
\item\label{prop66.2} There is an isomorphism of functors as follows:
$g(\mathrm{G})\cong h(\mathrm{G})$.
\end{enumerate}
\end{proposition}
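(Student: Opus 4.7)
My plan for both parts is to exploit the fact that the external tensor product is compatible with composition, direct sums, and the natural-transformation structure, so that the relevant data transfer between $\mathrm{F}$ and $\mathrm{G}$ by naturality. The first observation, immediate from the definition of $\mathrm{G}$, is that $\mathrm{G}^n X$ equals $\mathrm{F}^n X$ as an $A$-module with $B$-action $b \mapsto \mathrm{F}^n(\psi_b)$; since finite direct sums and integer multiples clearly commute with $\boxtimes \mathrm{ID}_B$, this upgrades to $p(\mathrm{G}) = p(\mathrm{F}) \boxtimes \mathrm{ID}_B$ for every $p \in \mathbb{Z}_+[x]$.

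Given this, part \eqref{prop66.2} is essentially formal. Fix a natural isomorphism $\alpha : g(\mathrm{F}) \overset{\sim}{\to} h(\mathrm{F})$. For $X \in C\text{-}\mathrm{mod}$ the component $\alpha_X$ (with $X$ viewed as an $A$-module) is already an $A$-linear isomorphism from $g(\mathrm{G})X$ to $h(\mathrm{G})X$. Naturality of $\alpha$ applied to the $A$-endomorphism $\psi_b$ of $X$ reads $\alpha_X \circ g(\mathrm{F})(\psi_b) = h(\mathrm{F})(\psi_b) \circ \alpha_X$, which is exactly the $B$-equivariance needed for $\alpha_X$ to be a morphism in $C\text{-}\mathrm{mod}$; naturality with respect to $C$-morphisms is inherited from naturality with respect to $A$-morphisms.

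The ``if'' direction of part \eqref{prop66.1} uses the same philosophy on hom-spaces. Starting from a natural selfadjunction $\phi_{M,N} : \mathrm{Hom}_A(\mathrm{F}M, N) \overset{\sim}{\to} \mathrm{Hom}_A(M, \mathrm{F}N)$, naturality in $M$ applied to $\psi_b^X$ and in $N$ applied to $\psi_b^Y$ together show that $\phi_{X,Y}$ sends $B$-equivariant maps to $B$-equivariant maps; since these subspaces are $\mathrm{Hom}_C(\mathrm{G}X, Y)$ and $\mathrm{Hom}_C(X, \mathrm{G}Y)$, respectively, the restriction provides the required selfadjunction for $\mathrm{G}$.

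The converse is the main obstacle, since one must extract $A$-level data from $C$-level data. I would use that $\Bbbk$ is algebraically closed and pick any simple $B$-module $S$, so that $\mathrm{End}_B(S) = \Bbbk$ by Schur's lemma. An elementary basis computation identifies $\mathrm{Hom}_A(M \otimes S, N \otimes S)$ with $\mathrm{Hom}_A(M, N) \otimes \mathrm{End}_\Bbbk(S)$, and the $B$-equivariance condition cuts this down to $\mathrm{Hom}_A(M, N) \otimes \mathrm{End}_B(S) \cong \mathrm{Hom}_A(M, N)$ naturally in $M$ and $N$. Since $\mathrm{G}(M \otimes S) = \mathrm{F}M \otimes S$, plugging $M \otimes S$ and $N \otimes S$ into the hypothetical selfadjunction of $\mathrm{G}$ yields a natural isomorphism $\mathrm{Hom}_A(\mathrm{F}M, N) \cong \mathrm{Hom}_A(M, \mathrm{F}N)$, establishing that $\mathrm{F}$ is selfadjoint.
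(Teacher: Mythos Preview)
Your argument is correct and, for part~\eqref{prop66.2} and the forward direction of part~\eqref{prop66.1}, follows the same idea as the paper: transfer the natural transformation (an isomorphism $g(\mathrm{F})\cong h(\mathrm{F})$, resp.\ the adjunction data) along the definition of $\mathrm{G}$, checking $B$-equivariance via naturality. The only formal difference is that you phrase the adjunction in terms of hom-set bijections while the paper uses the unit/counit morphisms $\mathrm{adj}:\mathrm{ID}_A\to\mathrm{F}\mathrm{F}$ and $\mathrm{adj}':\mathrm{F}\mathrm{F}\to\mathrm{ID}_A$ and observes that these induce corresponding morphisms $\overline{\mathrm{adj}},\overline{\mathrm{adj}}'$ for $\mathrm{G}$.

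Where your write-up genuinely adds something is in the converse of~\eqref{prop66.1}. The paper dispatches this with a bare ``and vice versa'', leaving unsaid how one extracts $A$-level adjunction data from $C$-level data. Your device of evaluating on $M\otimes S$ for a simple $B$-module $S$, together with the identification $\mathrm{Hom}_C(M\otimes S,N\otimes S)\cong\mathrm{Hom}_A(M,N)\otimes\mathrm{End}_B(S)\cong\mathrm{Hom}_A(M,N)$ coming from Schur's lemma over the algebraically closed field $\Bbbk$, makes this descent explicit and natural. So your route is essentially the paper's, but with the one nontrivial step (the converse) made honest.
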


\begin{proof}
From the definition of $\mathrm{G}$ it follows that the
adjunction morphisms $\mathrm{adj}:\mathrm{ID}_A\to \mathrm{F}\mathrm{F}$ and
$\mathrm{adj}':\mathrm{F}\mathrm{F}\to \mathrm{ID}_A$ induce in the natural
way adjunction morphisms $\overline{\mathrm{adj}}:\mathrm{ID}_C\to \mathrm{G}\mathrm{G}$ and $\overline{\mathrm{adj}}':\mathrm{G}\mathrm{G}
\to \mathrm{ID}_C$, and vice versa. This proves claim \eqref{prop66.1}.

Any isomorphism $g(\mathrm{F})\cong h(\mathrm{F})$ of functors induces,
by the functoriality of $\mathrm{F}$ and the definition of $\mathrm{G}$,
an isomorphism $g(\mathrm{G})\cong h(\mathrm{G})$. Claim
\eqref{prop66.2} follows and the proof is complete.
\end{proof}

\section{Selfadjoint idempotents (orthogonal projections)}\label{s4}

In this section we consider the equation
\begin{equation}\label{eq5}
\mathrm{F}\circ\mathrm{F}\cong\mathrm{F}, 
\end{equation}
which simply means that $\mathrm{F}$ is a selfadjoint idempotent 
(an orthogonal projection). For the theory of $*$-representations
of algebras, generated by orthogonal projections, we refer the reader
to \cite{Co,KS,KRS} and references therein. 

Every decomposition $A\cong B\oplus C$ into a direct sum of algebras 
(unital or zero) yields a decomposition
$A\text{-}\mathrm{mod}=B\text{-}\mathrm{mod}\oplus 
C\text{-}\mathrm{mod}$. Denote by $\mathfrak{p}_B:A\text{-}\mathrm{mod}\to 
B\text{-}\mathrm{mod}$ the natural projection with respect to this 
decomposition, that is the functor $\mathrm{ID}\boxplus\mathbf{0}$. 
Our main result in this section is the following:

\begin{theorem}\label{thm51}
Assume that $\mathrm{F}$ is a selfadjoint endofunctor on 
$A\text{-}\mathrm{mod}$ satisfying \eqref{eq5}. Then there exists 
a decomposition $A\cong B\oplus C$ into a direct sum of algebras 
(unital or zero) such that $\mathrm{F}\cong \mathfrak{p}_B$.
\end{theorem}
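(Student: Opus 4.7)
Let $V:=\mathrm{F}(A)$, so that $\mathrm{F}\cong V\otimes_{A}{}_-$. Selfadjointness of $\mathrm{F}$ translates into a bimodule isomorphism $V\cong V^{*}:=\mathrm{Hom}_{A-}(V,A)$, while \eqref{eq5} becomes $V\otimes_A V\cong V$. The plan is to first pin down $\mathrm{F}$ on each simple $L_i$ and then convert the resulting data into a bimodule statement that produces the algebra decomposition. Using selfadjointness together with $\mathrm{F}^{2}\cong\mathrm{F}$, one has $\mathrm{End}_A(\mathrm{F}L_i)\cong\mathrm{Hom}_A(L_i,\mathrm{F}^{2}L_i)\cong\mathrm{Hom}_A(L_i,\mathrm{F}L_i)$, of dimension $t_i$ equal (by selfadjointness) to both the top- and socle-multiplicity of $L_i$ in $\mathrm{F}L_i$. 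The embedding $\mathrm{Hom}_A(\mathrm{top}(\mathrm{F}L_i),\mathrm{F}L_i)\hookrightarrow\mathrm{End}_A(\mathrm{F}L_i)$ contains $\mathrm{Hom}_A(L_i,\mathrm{F}L_i)^{t_i}$ of dimension $t_i^{2}$, forcing $t_i^{2}\le t_i$, so $t_i\in\{0,1\}$.

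Set $S:=\{i:\mathrm{F}L_i\ne 0\}$. For $i\in S$ the copy of $L_i$ inside the socle of $\mathrm{F}L_i$ gives an inclusion $L_i\hookrightarrow\mathrm{F}L_i$; applying $\mathrm{F}$ to $0\to L_i\to\mathrm{F}L_i\to Q\to 0$ produces an injection $\mathrm{F}L_i\hookrightarrow\mathrm{F}^{2}L_i\cong\mathrm{F}L_i$ of finite-dimensional modules, necessarily an isomorphism, hence $\mathrm{F}Q=0$. Since $[\mathrm{F}Q]=\sum_k[Q:L_k][\mathrm{F}L_k]$ is a nonnegative combination equal to zero, each composition factor $L_k$ of $Q$ satisfies $k\in S^{c}$, and in particular the multiplicity of $L_i$ in $\mathrm{F}L_i$ equals one. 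It follows that $\mathrm{F}P_j\cong\bigoplus_k m_{jk}P_k$ with $m_{jk}=[\mathrm{F}L_k:L_j]$, that only the $P_i$ with $i\in S$ appear, and that $V\cong\bigoplus_{i\in S}\lambda_iP_i$ as a left $A$-module with $\lambda_i=1+\sum_{k\in S^{c}}m_{ki}$. Using $V\cong V^{*}$ and $\mathrm{Hom}_{A-}(P_i,A)\cong e_iA$ gives $V\cong\bigoplus_{i\in S}\lambda_ie_iA$ as a right $A$-module, so
\[
V\otimes_A V\cong\bigoplus_{i\in S}\lambda_ie_iV\cong\bigoplus_{i\in S}\lambda_i^{2}e_iA
\]
as right $A$-modules. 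The isomorphism $V\otimes_A V\cong V$ therefore forces $\lambda_i^{2}=\lambda_i$, giving $\lambda_i=1$ and $\mathrm{F}L_i\cong L_i$ for every $i\in S$.

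Set $\mathbf{1}_S:=\sum_{i\in S}e_i$. The previous step produces $V\cong A\mathbf{1}_S$ as a left and $V\cong\mathbf{1}_SA$ as a right $A$-module, together with $Ve_j=\mathrm{F}P_j=0$ and $e_jV=0$ for every $j\in S^{c}$. Reading $Ve_j=0$ through the right-module isomorphism $V\cong\mathbf{1}_SA$ gives $\mathbf{1}_SAe_j=0$, i.e.\ $e_iAe_j=0$ for all $i\in S$, $j\in S^{c}$; the dual argument from $e_iV=0$ with the left-module isomorphism yields $e_iAe_j=0$ for $i\in S^{c}$, $j\in S$. Hence $\mathbf{1}_S$ is a central idempotent, $A=B\oplus C$ with $B:=\mathbf{1}_SA\mathbf{1}_S$ and $C:=(1-\mathbf{1}_S)A(1-\mathbf{1}_S)$, and $V$ is a $(B,B)$-bimodule isomorphic to $B$ on each side. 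The residual twist of $V$ by an automorphism $\sigma\in\mathrm{Aut}(B)$ is killed by $V\otimes_B V\cong V$ via Proposition~\ref{prop2}\eqref{prop2.3}, so $V\cong B$ as bimodules and $\mathrm{F}\cong B\otimes_A{}_-=\mathfrak{p}_B$. The main obstacle is exactly the step $\lambda_i=1$: matrix-theoretic idempotence of $M_{\mathrm{F}}$ together with selfadjointness does not force $M_{\mathrm{F}}$ to be diagonal, and one must combine $V\otimes_A V\cong V$ with self-duality $V\cong V^{*}$ at the bimodule level to eliminate the stray composition factors of $\mathrm{F}L_i$ coming from $S^{c}$.
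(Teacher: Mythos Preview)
Your argument has a genuine gap at the step
\[
V\otimes_A V\;\cong\;\bigoplus_{i\in S}\lambda_i\,e_iV\;\cong\;\bigoplus_{i\in S}\lambda_i^{2}\,e_iA
\]
as right $A$-modules. The first isomorphism is fine: decomposing the left tensor factor via the right-module isomorphism $V_A\cong\bigoplus_k\lambda_k\,e_kA$ and using $e_kA\otimes_A V\cong e_kV$ does give $\bigoplus_k\lambda_k\,e_kV$. The second isomorphism, however, asserts $e_iV\cong\lambda_i\,e_iA$, and this does not follow. The isomorphism $V_A\cong\bigoplus_k\lambda_k\,e_kA$ is only a right-module isomorphism; it has no reason to intertwine the \emph{left} action of $e_i$, so you cannot simply apply $e_i$ on the left to it. Concretely, $e_iV$ is some projective right $A$-module, but its decomposition is governed by $\dim(e_iVe_j)=\sum_k m_{jk}\dim(e_iAe_k)$, and there is no reason for this to equal $\lambda_i\dim(e_iAe_j)$ before you know $\lambda_i=1$. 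Your own closing remark is apt: this is exactly the obstacle, and the bimodule computation as written does not clear it.

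The repair is immediate and is precisely the paper's argument. You have already shown that every composition factor $L_k$ of $Q$ lies in $S^{c}$, so $\mathrm{F}L_k=0$. If $Q\neq 0$, choose a simple quotient $L_k$ of $Q$ (hence of $\mathrm{F}L_i$); then by adjunction
\[
0\neq\mathrm{Hom}_A(\mathrm{F}L_i,L_k)\cong\mathrm{Hom}_A(L_i,\mathrm{F}L_k)=0,
\]
a contradiction. Hence $Q=0$, i.e.\ $\mathrm{F}L_i\cong L_i$ and $\lambda_i=1$. There is also a small unjustified assertion that $e_jV=0$ for $j\in S^{c}$; this does follow from the bimodule isomorphism $V\cong V^{*}$ (since $e_jV^{*}\cong\mathrm{Hom}_{A-}(Ve_j,A)=0$), but it should be said. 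With these two points fixed, your bimodule endgame --- identifying $V$ with a twisted bimodule ${}_{\sigma}B$ and using $V\otimes_B V\cong V$ together with Proposition~\ref{prop2}\eqref{prop2.3} to force $\sigma$ inner --- is a clean alternative to the paper's conclusion, which instead checks that the adjunction morphism $\mathrm{ID}\to\mathrm{F}^2\cong\mathrm{F}$ is an isomorphism on $B\text{-}\mathrm{mod}$ via the Five Lemma.
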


\begin{proof}
For a simple $A$-module $L$ set $\mathrm{F}L=X_L$.

\begin{lemma}\label{lem53}
We have $X_L=0$ or $X_L\cong L\oplus Y_L$ such that $\mathrm{F}Y_L=0$.
\end{lemma}

\begin{proof}
Assume $X_L\neq 0$. Then we have 
\begin{displaymath}
\begin{array}{rcl} 
0&\neq &\mathrm{Hom}_{A}(X_L,X_L)\\
&= &\mathrm{Hom}_{A}(\mathrm{F}L,\mathrm{F}L)\\
(\text{by adjunction})&= &\mathrm{Hom}_{A}(L,\mathrm{F}\mathrm{F}L)\\
(\text{by \eqref{eq5}})&= &\mathrm{Hom}_{A}(L,\mathrm{F}L)\\
&= &\mathrm{Hom}_{A}(L,X_L).
\end{array}
\end{displaymath}
Similarly, $\mathrm{Hom}_{A}(X_L,L)\neq 0$, which means that $L$ is both,
a submodule and a quotient of $X_L$. In particular, we have 
$[X_L]=[L]+z$ for some $z\in[A]$.

Further, we have
\begin{multline*}
[X_L]=[\mathrm{F}L]\overset{\eqref{eq5}}{=}[\mathrm{F}^2L]=
[\mathrm{F}X_L]=[\mathrm{F}][X_L]=\\=[\mathrm{F}]([L]+z)=
[\mathrm{F}][L]+[\mathrm{F}]z=
[\mathrm{F}L]+[\mathrm{F}]z=[X_L]+[\mathrm{F}]z.
\end{multline*}
This yields $[\mathrm{F}]z=0$. In particular, $L$ occurs with
multiplicity one in $X_L$ and hence, by the previous paragraph, 
$X_L\cong L\oplus Y_L$ for some $Y_L$. We further have $[Y_l]=z$ and
thus $\mathrm{F}Y_L=0$ follows from $[\mathrm{F}]z=0$. This completes
the proof.
\end{proof}

\begin{lemma}\label{lem54}
For every $L$ such that $\mathrm{F}L\neq 0$ we have $Y_L=0$.
\end{lemma}

\begin{proof}
Assume that $Y_L\neq 0$ and let $L'$ be a simple submodule of $Y_L$. 
Then $\mathrm{F}L'=0$ by Lemma~\ref{lem53}, in particular, $L'\neq L$. 
Hence we have
\begin{displaymath}
\begin{array}{rcl} 
0&\neq&\mathrm{Hom}_{A}(L',Y_L)\\
&=&\mathrm{Hom}_{A}(L',Y_L\oplus L)\\
&=&\mathrm{Hom}_{A}(L',\mathrm{F}L)\\
(\text{by adjunction})&=&\mathrm{Hom}_{A}(\mathrm{F}L',L)\\
(\text{by Lemma~\ref{lem53}})&=&0.
\end{array}
\end{displaymath}
The obtained contradiction completes the proof.
\end{proof}

From Lemmata~\ref{lem53} and \ref{lem54} it follows that the
matrix $M_{\mathrm{F}}$ is diagonal with zeros and ones on the diagonal,
and the ones correspond to exactly those simple $A$-modules, which are not
annihilated by $\mathrm{F}$. Without loss of generality we may assume
that the simple modules not annihilated by $\mathrm{F}$ are $L_1,L_2,\dots,L_k$
for some $k\in\{0,1,\dots,n\}$. From Lemma~\ref{lem9} we also obtain
$M_{\mathrm{F}}=N_{\mathrm{F}}$, that is
\begin{displaymath}
\mathrm{F}P_i=
\begin{cases}
P_i,& i\leq k;\\
0,& i>k. 
\end{cases}
\end{displaymath}
As any simple module is sent to a simple module or zero, it follows that for
$i\leq k$ all simple subquotients of $P_i$ have the form
$L_j$, $j\leq k$; and for $i>k$ all simple subquotients of $P_i$ have 
the form $L_j$, $j>k$. Therefore there is a decomposition
$A\cong B\oplus C$, where 
\begin{displaymath}
B\cong\mathrm{End}_{A}(P_1\oplus P_2\oplus\dots\oplus P_k)^{\mathrm{op}},
\quad C\cong\mathrm{End}_{A}(P_{k+1}\oplus P_{k+2}\oplus\dots\oplus P_n)^{\mathrm{op}}.
\end{displaymath}
The adjunction morphism $\mathrm{ID}\to \mathrm{F}^2\cong \mathrm{F}$
is nonzero on all $L_i$, $i\leq k$, and hence is an isomorphism
as $\mathrm{F}L_i\cong L_i$ by Lemmata~\ref{lem53} and \ref{lem54}.
By induction on the length of a module and the Five Lemma it
follows that the adjunction morphism is an isomorphism on all
$B$-modules (see e.g. \cite[3.7]{Ma} for details). Therefore $\mathrm{F}$
is isomorphic to the identity functor, when restricted to 
$B\text{-}\mathrm{mod}$. By the definition of $C$, the functor
$\mathrm{F}$ is the zero functor on $C\text{-}\mathrm{mod}$.
The claim of the theorem follows.
\end{proof}

\begin{corollary}\label{cor52}
If $A$ is connected then the only selfadjoint solutions to \eqref{eq5} 
are the identity and the zero functors.
\end{corollary}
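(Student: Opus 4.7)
The plan is to deduce this directly from Theorem~\ref{thm51}, which reduces the entire classification problem of selfadjoint idempotent functors to specifying a decomposition $A\cong B\oplus C$ of $A$ as a direct sum of algebras (either or both of which are permitted to be zero). So the work has essentially been done; all that remains is to unpack what ``connected'' means for $A$ in terms of such decompositions.

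Concretely, I would first apply Theorem~\ref{thm51} to obtain an isomorphism $\mathrm{F}\cong \mathfrak{p}_B$ for some decomposition $A\cong B\oplus C$. Then I would invoke the definition of connectedness: $A$ is connected precisely when it does not admit a decomposition as a direct sum of two nonzero (unital) ideals. Consequently the decomposition $A\cong B\oplus C$ must be trivial, i.e.\ either $B=0$ and $C=A$, or $B=A$ and $C=0$. In the first case $\mathfrak{p}_B = \mathbf{0}$, and in the second $\mathfrak{p}_B = \mathrm{ID}$, which yields the two asserted solutions.

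There is no genuine obstacle here since all the technical content is packaged inside Theorem~\ref{thm51}; the only subtlety is the convention that the summands $B,C$ in the decomposition are allowed to be zero (as already permitted in the statement of Theorem~\ref{thm51}), so that both the identity and zero functors arise as legitimate instances of the general construction. Conversely, it is clear that both $\mathrm{ID}$ and $\mathbf{0}$ are selfadjoint and satisfy \eqref{eq5}, so the list of solutions is exactly these two functors.
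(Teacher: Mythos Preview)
Your proof is correct and follows essentially the same approach as the paper: apply Theorem~\ref{thm51} to obtain a decomposition $A\cong B\oplus C$ with $\mathrm{F}\cong\mathfrak{p}_B$, then use connectedness of $A$ to force one of $B$, $C$ to be zero. The paper's own proof is just a two-sentence version of exactly this argument.
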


\begin{proof}
If $A$ is connected and $A\cong B\oplus C$, then either $B$ or
$C$ is zero. Thus the statement follows  directly from 
Theorem~\ref{thm51}.
\end{proof}

\begin{corollary}\label{cor53}
If $\mathrm{F}$ and $\mathrm{G}$ are two selfadjoint solutions to 
\eqref{eq5}, then $\mathrm{F}\circ\mathrm{G}\cong \mathrm{G}\circ\mathrm{F}$.
\end{corollary}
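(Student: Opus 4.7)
The plan is to invoke Theorem~\ref{thm51} to reduce the question to the commutativity of two block-projections on $A\text{-}\mathrm{mod}$, and then to observe that such projections are governed by central idempotents of $A$, which commute for the trivial reason that they lie in a commutative ring.

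First I would apply Theorem~\ref{thm51} separately to $\mathrm{F}$ and $\mathrm{G}$. This produces algebra decompositions $A\cong B\oplus C$ and $A\cong B'\oplus C'$ together with isomorphisms $\mathrm{F}\cong\mathfrak{p}_B$ and $\mathrm{G}\cong\mathfrak{p}_{B'}$ of endofunctors of $A\text{-}\mathrm{mod}$ (here I read $\mathfrak{p}_B$ as the endofunctor $\mathrm{ID}_{B\text{-}\mathrm{mod}}\boxplus\mathbf{0}$ on $A\text{-}\mathrm{mod}=B\text{-}\mathrm{mod}\oplus C\text{-}\mathrm{mod}$). Thus it suffices to prove that $\mathfrak{p}_B\circ\mathfrak{p}_{B'}\cong\mathfrak{p}_{B'}\circ\mathfrak{p}_B$.

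Next I would translate each decomposition into a central idempotent: let $e\in Z(A)$ be the identity of the summand $B$ (so $1-e$ is the identity of $C$), and similarly let $e'\in Z(A)$ correspond to $B'$. For an $A$-module $M$, the functor $\mathfrak{p}_B$ simply returns the submodule $eM$, with $C$ acting as zero; this is natural in $M$. Because $e,e'\in Z(A)$ commute, we have $ee'=e'e$, so for every $M\in A\text{-}\mathrm{mod}$,
\begin{displaymath}
\mathfrak{p}_B\bigl(\mathfrak{p}_{B'}M\bigr)=e(e'M)=(ee')M=(e'e)M=e'(eM)=\mathfrak{p}_{B'}\bigl(\mathfrak{p}_BM\bigr),
\end{displaymath}
and this identification is natural in $M$. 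Composing with the isomorphisms $\mathrm{F}\cong\mathfrak{p}_B$ and $\mathrm{G}\cong\mathfrak{p}_{B'}$ yields $\mathrm{F}\circ\mathrm{G}\cong\mathrm{G}\circ\mathrm{F}$.

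There is no real obstacle here: Theorem~\ref{thm51} does all the work by forcing each selfadjoint idempotent functor to be a block-projection, and the only remaining point is the essentially trivial observation that two commuting central idempotents give commuting projection functors. The only minor care needed is bookkeeping: making sure $\mathfrak{p}_B$ is consistently regarded as an endofunctor of $A\text{-}\mathrm{mod}$ so that the composition $\mathfrak{p}_B\circ\mathfrak{p}_{B'}$ literally makes sense.
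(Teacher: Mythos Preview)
Your argument is correct and follows essentially the same route as the paper: both apply Theorem~\ref{thm51} to reduce to block-projections and then use that such projections commute. The only cosmetic difference is that the paper spells out the common refinement $A\cong\bigoplus_{i,j}A_{ij}$ and identifies both $\mathrm{F}\circ\mathrm{G}$ and $\mathrm{G}\circ\mathrm{F}$ with $\mathfrak{p}_{A_{00}}$, whereas you phrase the same fact via the commuting central idempotents $e,e'$; since $A_{00}$ is exactly the block $ee'A$, the two presentations are equivalent.
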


\begin{proof}
Define:
\begin{displaymath}
\begin{array}{rcl} 
X_{00}&=&\{i\in\{1,2,\dots,n\}:\mathrm{F}L_i\neq 0,\mathrm{G}L_i\neq 0\},\\
X_{10}&=&\{i\in\{1,2,\dots,n\}:\mathrm{F}L_i= 0,\mathrm{G}L_i\neq 0\},\\
X_{01}&=&\{i\in\{1,2,\dots,n\}:\mathrm{F}L_i\neq 0,\mathrm{G}L_i= 0\},\\
X_{11}&=&\{i\in\{1,2,\dots,n\}:\mathrm{F}L_i=0,\mathrm{G}L_i=0\}.
\end{array}
\end{displaymath}
Then $\{1,2,\dots,n\}$ is a disjoint union of $X_{ij}$, $i,j\in\{0,1\}$.
For $i,j\in\{0,1\}$ set
\begin{displaymath}
A_{ij}:=\mathrm{End}_{A}(\oplus_{s\in X_{ij}}P_s)^{\mathrm{op}}.
\end{displaymath}
Similarly to the proof of Theorem~\ref{thm51} one obtains that
$A\cong \oplus_{i,j=0}^1A_{i,j}$ and, moreover, that both
$\mathrm{F}\circ\mathrm{G}$ and $\mathrm{G}\circ\mathrm{F}$
are isomorphic to $\mathfrak{p}_{A_{00}}$ with respect to this
decomposition. The claim follows.
\end{proof}

Selfadjointness of $\mathrm{F}$ is important for the claim of
Theorem~\ref{thm51}. Here is an example of an exact, but not
selfadjoint, functor satisfying \eqref{eq5}, which is not of the
type $\mathfrak{p}_{B}$: Let $A=\Bbbk\oplus \Bbbk$, then
an $A$-module is just a collection $(X,Y)$ of two vector spaces.
Define the functor $\mathrm{F}$ as follows:
$\mathrm{F}(X,Y):=(X\oplus Y,0)$ with the obvious action on morphisms.
Then $\mathrm{F}$ satisfies \eqref{eq5} but is not selfadjoint.
In fact, for any idempotent matrix 
$M\in\mathrm{Mat}_{n\times n}(\mathbb{Z}_+)$ one can similarly define 
an exact endofunctor $\mathrm{F}$ on $A\text{-}\mathrm{mod}$, where
\begin{displaymath}
A=\underbrace{\Bbbk\oplus \Bbbk\oplus\dots
\oplus \Bbbk}_{\text{$n$ summands}},
\end{displaymath}
such that  $M_{\mathrm{F}}=M$ (see Section~\ref{s5} for more details).

There are also many natural idempotent functors, which are exact on
only one side. For example, for any $X\subset \{1,2,\dots,n\}$ one
could define an idempotent right exact (but, in general, not left 
exact) endofunctor $\mathrm{Z}_X$ on $A\text{-}\mathrm{mod}$ as follows:
$\mathrm{Z}_XN$ is the maximal quotient of $N$, whose simple
subquotients are all isomorphic to $L_i$, $i\in X$. The latter 
functors appear in Lie Theory, see e.g. \cite{MS9}.
 
\section{Functors generating a cyclic semigroup}\label{s9}

\begin{proposition}\label{prop92}
Let $\mathrm{F}$ be a selfadjoint endofunctor on 
$A\text{-}\mathrm{mod}$. If $\mathrm{F}^k=0$ for some
$k\in\mathbb{N}$, then $\mathrm{F}=0$.
\end{proposition}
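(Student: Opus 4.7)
The plan is to exploit selfadjointness to convert powers of $\mathrm{F}$ inside a single $\mathrm{Hom}$ space. Since $\mathrm{F}$ is selfadjoint, adjunction gives, for any module $M$ and any $a,b \in \mathbb{Z}_+$, a natural isomorphism
\begin{displaymath}
\mathrm{Hom}_A(\mathrm{F}^a M, \mathrm{F}^b M) \cong \mathrm{Hom}_A(M, \mathrm{F}^{a+b} M),
\end{displaymath}
obtained by peeling off the $a$ copies of $\mathrm{F}$ on the left one at a time.

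Now suppose $\mathrm{F}^k \cong 0$ for some $k \in \mathbb{N}$, and set $a = b = \lceil k/2 \rceil$. Then $a+b \geq k$, so the right-hand side above vanishes for every $M$. Consequently $\mathrm{End}_A(\mathrm{F}^{\lceil k/2 \rceil} M) = 0$, which forces $\mathrm{F}^{\lceil k/2 \rceil} M = 0$, since otherwise this endomorphism algebra would at least contain the identity. As this holds for every $M$, we obtain $\mathrm{F}^{\lceil k/2 \rceil} \cong 0$.

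Iterating this halving step produces a strictly decreasing sequence of exponents $k \mapsto \lceil k/2 \rceil \mapsto \cdots$ for which $\mathrm{F}$ raised to that power is still zero, and this sequence reaches $1$ in finitely many steps. Therefore $\mathrm{F} \cong 0$, as claimed. No serious obstacle is anticipated here; the only point to watch is to ensure the adjunction isomorphism is applied to a fixed $M$ on both sides, so that the identity morphism on $\mathrm{F}^a M$ furnishes a nonzero endomorphism whenever $\mathrm{F}^a M \neq 0$.
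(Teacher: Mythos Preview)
Your argument is correct and is essentially the same as the paper's: both use the adjunction isomorphism $\mathrm{Hom}_A(\mathrm{F}^aM,\mathrm{F}^bM)\cong\mathrm{Hom}_A(M,\mathrm{F}^{a+b}M)$ together with the observation that a nonzero module has nonzero endomorphism ring. The only cosmetic difference is that the paper first isolates the case $k=2$ and then decreases the exponent one step at a time by applying that case to the selfadjoint functor $\mathrm{F}^{k-1}$, whereas you halve the exponent directly; neither variant involves any idea the other lacks.
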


\begin{proof}
The claim is obvious for $k=1$. Assume that $k=2$. Then
$\mathrm{F}^2=0$. The condition $\mathrm{F}\neq 0$ is equivalent to
the condition $\mathrm{F}L_i\neq 0$ for some $i\in\{1,2,\dots,n\}$.
If $\mathrm{F}L_i\neq 0$, then, using the adjunction, we get
\begin{displaymath}
0\neq \mathrm{Hom}_A(\mathrm{F}L_i,\mathrm{F}L_i) \cong
\mathrm{Hom}_A(L_i,\mathrm{F}\mathrm{F}L_i).
\end{displaymath}
However, $\mathrm{F}\mathrm{F}L_i=0$ as $\mathrm{F}^2=0$, a contradiction.
Therefore $\mathrm{F}=0$.

Now we proceed by induction on $k$. Assume $k>2$. Then 
$\mathrm{F}^k=0$ implies $\mathrm{F}^{2(k-1)}=(\mathrm{F}^{k-1})^2=0$.
As $\mathrm{F}^{k-1}$ is selfadjoint, by the above we have 
$\mathrm{F}^{k-1}=0$. Now $\mathrm{F}=0$ follows from the inductive 
assumption.
\end{proof}

\begin{proposition}\label{prop93}
Let $\mathrm{F}$ be a selfadjoint endofunctor on 
$A\text{-}\mathrm{mod}$ such that 
\begin{equation}\label{eq091}
\mathrm{F}^k\cong \mathrm{F}^m
\end{equation}
for some $k,m\in\mathbb{N}$, $k>m\geq 1$. 
\begin{enumerate}[(i)]
\item\label{prop93.1} If $k-m$ is odd, then $\mathrm{F}^2\cong 
\mathrm{F}$ (and, conversely, any $\mathrm{F}$ satisfying
$\mathrm{F}^2\cong \mathrm{F}$ obviously satisfies 
$\mathrm{F}^k\cong \mathrm{F}^m$).  
\item\label{prop93.2} If $k-m$ is even, then there is 
a decomposition $A\cong B\oplus C$ into a direct sum of algebras 
(unital or zero) and an algebra automorphism $\varphi:B\to B$
such that $\varphi^2$ is inner and 
$\mathrm{F}\cong \mathrm{F}_{\varphi}\boxplus \mathbf{0}$.
\end{enumerate}
\end{proposition}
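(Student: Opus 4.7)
The plan is to produce an idempotent selfadjoint power of $\mathrm{F}$, apply Theorem~\ref{thm51} to decompose $A$, and then analyze the two summands using Propositions~\ref{prop92} and~\ref{prop2}. Set $d := k-m$ and choose a positive multiple $N$ of $d$ with $N \geq m$ (for instance $N = dm$). Put $\mathrm{G} := \mathrm{F}^N$. Since $2N \geq m$ and $2N \equiv N \pmod{d}$, iterating $\mathrm{F}^k \cong \mathrm{F}^m$ yields $\mathrm{G}^2 = \mathrm{F}^{2N} \cong \mathrm{F}^N = \mathrm{G}$. The functor $\mathrm{G}$ is selfadjoint as a power of a selfadjoint functor, so Theorem~\ref{thm51} delivers a decomposition $A \cong B \oplus C$ with $\mathrm{G} \cong \mathfrak{p}_B$.

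The next step is to check that $\mathrm{F}$ respects this decomposition. Since $\mathrm{F}\circ \mathrm{G}$ and $\mathrm{G}\circ \mathrm{F}$ both equal $\mathrm{F}^{N+1}$, they are isomorphic; hence for any $X \in B\text{-}\mathrm{mod}$ we have $\mathrm{G}(\mathrm{F}X) \cong \mathrm{F}(\mathrm{G}X) \cong \mathrm{F}X$, forcing $\mathrm{F}X \in B\text{-}\mathrm{mod}$, and similarly $\mathrm{F}$ sends $C\text{-}\mathrm{mod}$ into itself. The restrictions $\mathrm{F}|_B$ and $\mathrm{F}|_C$ inherit selfadjointness from $\mathrm{F}$ (since $\mathrm{Hom}_A$ between objects in different summands vanishes). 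Now $(\mathrm{F}|_C)^N \cong \mathrm{G}|_C = \mathbf{0}$, so Proposition~\ref{prop92} forces $\mathrm{F}|_C = \mathbf{0}$. On the other side, $(\mathrm{F}|_B)^N \cong \mathrm{G}|_B \cong \mathrm{ID}_B$, which makes $\mathrm{F}|_B$ an autoequivalence of $B\text{-}\mathrm{mod}$; Proposition~\ref{prop2}\eqref{prop2.2} and~\eqref{prop2.4} give $\mathrm{F}|_B \cong \mathrm{F}_\varphi$ for some $\varphi \in \mathrm{Aut}(B)$ with $\varphi^2$ inner. Combined with $\mathrm{F}|_C = \mathbf{0}$, this already proves claim~\eqref{prop93.2}: $\mathrm{F} \cong \mathrm{F}_\varphi \boxplus \mathbf{0}$.

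For claim~\eqref{prop93.1}, assume $d$ is odd. Restricting $\mathrm{F}^k \cong \mathrm{F}^m$ to $B\text{-}\mathrm{mod}$ and applying Proposition~\ref{prop2}\eqref{prop2.0} gives $\mathrm{F}_{\varphi^k} \cong \mathrm{F}_{\varphi^m}$; composing with $\mathrm{F}_{\varphi^{-m}}$ yields $\mathrm{F}_{\varphi^d} \cong \mathrm{ID}_B$, so $\varphi^d$ is inner by Proposition~\ref{prop2}\eqref{prop2.3}. Since $\gcd(d,2) = 1$, choose $a,b \in \mathbb{Z}$ with $ad + 2b = 1$; then $\varphi = (\varphi^d)^a(\varphi^2)^b$ is a product of inner automorphisms, hence itself inner. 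By Proposition~\ref{prop2}\eqref{prop2.3} we obtain $\mathrm{F}|_B \cong \mathrm{ID}_B$, so $\mathrm{F} \cong \mathrm{ID}_B \boxplus \mathbf{0} = \mathfrak{p}_B$, which manifestly satisfies $\mathrm{F}^2 \cong \mathrm{F}$. The one delicate point is verifying that $\mathrm{F}$ stabilizes the decomposition coming from $\mathrm{G}$; once this commutativity argument is in place, everything else is an assembly of Theorem~\ref{thm51}, Proposition~\ref{prop92}, and Proposition~\ref{prop2}.
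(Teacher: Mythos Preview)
Your argument is correct and follows the same overall strategy as the paper: produce an idempotent selfadjoint power $\mathrm{F}^N$, invoke Theorem~\ref{thm51} to split $A\cong B\oplus C$, and then analyze $\mathrm{F}$ on each summand via Proposition~\ref{prop2}. The one genuine technical difference lies in how you show that $\mathrm{F}$ preserves the decomposition. The paper proves this by two adjunction arguments (Lemmas~\ref{lem94} and~\ref{lem95}): a decreasing induction on $i$ shows $\mathrm{F}^iN=0$ for $N\in C\text{-}\mathrm{mod}$, and then adjunction against ${}_CC$ shows $\mathrm{F}$ preserves $B\text{-}\mathrm{mod}$. You instead observe that $\mathrm{F}$ commutes with $\mathrm{G}=\mathrm{F}^N\cong\mathfrak{p}_B$, which immediately forces $\mathrm{F}$ to preserve both summands, and then you invoke Proposition~\ref{prop92} as a black box to kill $\mathrm{F}|_C$. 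Your route is slightly more modular (it reuses Proposition~\ref{prop92} rather than reproving its content inside Lemma~\ref{lem94}), while the paper's route is more self-contained. Your treatment of the odd case, making the B\'ezout step $\varphi=(\varphi^d)^a(\varphi^2)^b$ explicit, is also a bit more detailed than the paper's one-line remark.
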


\begin{proof}
From \eqref{eq091} it follows that 
$\mathrm{F}^{m+i(k-m)}\cong \mathrm{F}^m$ for all $i\in\mathbb{N}$.
We can choose $i$ such that $s:=i(k-m)>m$. Applying
$\mathrm{F}^{s-m}$ to $\mathrm{F}^{m+s}\cong \mathrm{F}^m$ we get
$\mathrm{F}^{2s}\cong \mathrm{F}^s$. As $\mathrm{F}^s$ is selfadjoint,
from Theorem~\ref{thm51} we obtain a decomposition $A\cong B\oplus C$ 
into a direct sum of algebras (unital or zero) such that
$\mathrm{F}^s\cong \mathfrak{p}_B$. We have
\begin{displaymath}
A\text{-}\mathrm{mod}\cong  
B\text{-}\mathrm{mod}\oplus C\text{-}\mathrm{mod}.
\end{displaymath}

\begin{lemma}\label{lem94}
We have $\mathrm{F}N=0$ for any $N\in C\text{-}\mathrm{mod}$.
\end{lemma}

\begin{proof}
We prove that $\mathrm{F}^iN=0$ by decreasing induction on $i$.
As $\mathrm{F}^s\cong \mathfrak{p}_B$, we have
$\mathrm{F}^sN=0$, which is the basis of our induction. 
For $i\in\{1,2,\dots,s-1\}$ we have, by adjunction,
\begin{displaymath}
\mathrm{Hom}_A(\mathrm{F}^iN,\mathrm{F}^iN)\cong 
\mathrm{Hom}_A(N,\mathrm{F}^{2i}N).
\end{displaymath}
From the inductive assumption we have $\mathrm{F}^{2i}N=0$ which implies
$\mathrm{Hom}_A(N,\mathrm{F}^{2i}N)=0$ and hence $\mathrm{F}^iN=0$.
\end{proof}

\begin{lemma}\label{lem95}
The functor $\mathrm{F}$ preserves $B\text{-}\mathrm{mod}$.
\end{lemma}

\begin{proof}
For $N\in B\text{-}\mathrm{mod}$ we have, by adjunction,
\begin{displaymath}
\mathrm{Hom}_A({}_CC,\mathrm{F}N)\cong 
\mathrm{Hom}_A(\mathrm{F}{}_CC,N)
\overset{\text{Lemma~\ref{lem94}}}{\cong} 0.
\end{displaymath}
The claim follows.
\end{proof}

From Lemmata~\ref{lem94} and \ref{lem95} we may write
$\mathrm{F}=\mathrm{G}_B\boxplus \mathbf{0}$, where 
$\mathrm{G}_B$ is a selfadjoint endofunctor on $B\text{-}\mathrm{mod}$. 
From $\mathrm{F}^s\cong \mathfrak{p}_B$ we obtain 
$\mathrm{G}_B^s\cong \mathrm{ID}$. By Proposition~\ref{prop2}, the
latter yields $\mathrm{G}_B\cong \mathrm{F}_{\varphi}$ for some
algebra automorphism $\varphi:B\to B$ such that $\varphi^2$ is inner.
 
Note that $\mathrm{F}_{\varphi}^2\cong \mathrm{ID}$. Therefore in the 
case when $k-m$ is odd, we must have that already 
$F_{\varphi}\cong \mathrm{ID}$, which implies that 
$\mathrm{F}\cong \mathfrak{p}_B$. It is easy
to see that $\mathfrak{p}_B$ satisfies \eqref{eq091}.

In the case when $k-m$ is even, it is easy to check that every
$\mathrm{F}_{\varphi}\boxplus \mathbf{0}$, for $\varphi$ as above, 
satisfies \eqref{eq091}. The claim follows.
\end{proof}

\section{Semisimple algebras}\label{s5}

For a semisimple algebra $A\cong\oplus_{i=1}^n\Bbbk$ there is a 
natural bijection between isomorphism classes of endofunctors on 
$A\text{-}\mathrm{mod}$ and matrices in 
$\mathrm{Mat}_{n\times n}(\mathbb{Z}_+)$. The correspondence is 
given as follows: The endofunctor $\mathrm{F}$ on
$A\text{-}\mathrm{mod}$ is sent to the matrix $M_{\mathrm{F}}$.
The inverse of this map is defined as follows:
Denote by $\Bbbk_{(i)}$, $i=1,\dots,n$, the
$i$-th simple component of the algebra $A$ (i.e. 
$A=\Bbbk_{(1)}\oplus\dots\oplus\Bbbk_{(n)}$). 
Then the matrix $X=(\mathtt{x}_{i,j})_{i,j=1,\dots,n}$
is sent to the direct sum (over all $i$ and $j$) of the
functors $\mathtt{x}_{i,j}\mathrm{ID}:\Bbbk_{(j)}\text{-}\mathrm{mod}
\to\Bbbk_{(i)}\text{-}\mathrm{mod}$. We have

\begin{proposition}\label{prop61}
Let $A\cong\oplus_{i=1}^n\Bbbk$.
\begin{enumerate}[(i)]
\item\label{prop61.1} An endofunctor  $\mathrm{F}$ on
$A\text{-}\mathrm{mod}$ is selfadjoint if and only if 
$M_{\mathrm{F}}$ is symmetric.
\item\label{prop61.2} Let $g(x),h(x)\in\mathbb{Z}_+(x)$. Then there is
a one-to one correspondence between the isomorphism classes of 
(selfadjoint) endofunctors $\mathrm{F}$ on $A\text{-}\mathrm{mod}$ 
satisfying $g(\mathrm{F})\cong h(\mathrm{F})$ and (symmetric) solutions
(in $\mathrm{Mat}_{n\times n}(\mathbb{Z}_+)$) of the matrix equation
$g(x)=h(x)$.
\end{enumerate} 
\end{proposition}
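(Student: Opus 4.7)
The plan is to reduce everything to a careful unwinding of the explicit bijection sending an endofunctor $\mathrm{F}$ on $A\text{-}\mathrm{mod}$ to its matrix $M_{\mathrm{F}}\in\mathrm{Mat}_{n\times n}(\mathbb{Z}_+)$. Since $A\cong\oplus_{i=1}^n\Bbbk$ is semisimple and basic, each indecomposable projective $P_i$ coincides with the simple module $L_i=\Bbbk_{(i)}$, so the matrix $N_{\mathrm{F}}$ of projective multiplicities equals $M_{\mathrm{F}}$ tautologically.

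For part~\eqref{prop61.1}, I would handle both directions directly. For the forward direction, assuming $\mathrm{F}$ is selfadjoint, Lemma~\ref{lem9} gives $N_{\mathrm{F}}=M_{\mathrm{F}}^t$, which combined with $N_{\mathrm{F}}=M_{\mathrm{F}}$ forces $M_{\mathrm{F}}$ to be symmetric. For the converse, I would exploit the inverse to the bijection described just before the proposition statement: given a symmetric matrix $X=(\mathtt{x}_{ij})$, the corresponding functor $\mathrm{F}$ decomposes as an external direct sum (over all $i,j$) of elementary pieces $\mathtt{x}_{ij}\mathrm{ID}\colon\Bbbk_{(j)}\text{-}\mathrm{mod}\to\Bbbk_{(i)}\text{-}\mathrm{mod}$. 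Each such elementary piece admits as its two-sided adjoint the analogous functor going from $\Bbbk_{(i)}\text{-}\mathrm{mod}$ to $\Bbbk_{(j)}\text{-}\mathrm{mod}$ with the same multiplicity (since $\Bbbk$ is selfdual over itself). Hence the adjoint of $\mathrm{F}$ has matrix $X^t$, and the hypothesis $X=X^t$ promotes this numerical equality to a functorial isomorphism between $\mathrm{F}$ and its adjoint, establishing selfadjointness.

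For part~\eqref{prop61.2}, the crux is that the bijection $\mathrm{F}\mapsto M_{\mathrm{F}}$ intertwines composition of functors with matrix multiplication and direct sum with matrix addition. The compatibility with direct sums is immediate from the definition of $M_{\mathrm{F}}$ on Grothendieck groups. For composition, I would note that the only indecomposable nonzero endofunctor of $\Bbbk\text{-}\mathrm{mod}$ is $\mathrm{ID}$, with one-dimensional endomorphism ring, so composing elementary functors between components follows exactly the bookkeeping of matrix multiplication. Given these two compatibilities, $g(\mathrm{F})$ and $h(\mathrm{F})$ correspond to $g(M_{\mathrm{F}})$ and $h(M_{\mathrm{F}})$ respectively, and the equivalence $g(\mathrm{F})\cong h(\mathrm{F})\iff g(M_{\mathrm{F}})=h(M_{\mathrm{F}})$ becomes a tautology. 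Restricting to the selfadjoint (respectively, symmetric) sub-case via part~\eqref{prop61.1} then completes the statement.

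I do not anticipate any serious obstacle; the only step requiring mild care is the verification that functor composition corresponds to matrix multiplication, which I would spell out by choosing natural isomorphisms for each triple of components, relying on the trivial identification $\Bbbk\otimes_{\Bbbk}\Bbbk\cong\Bbbk$. Everything else is a direct consequence of the semisimple structure of $A$.
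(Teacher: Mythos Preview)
Your proposal is correct and follows essentially the same approach as the paper: for \eqref{prop61.1} you invoke Lemma~\ref{lem9} together with $P_i=L_i$ exactly as the paper does, and for \eqref{prop61.2} you unwind the bijection $\mathrm{F}\mapsto M_{\mathrm{F}}$ described just before the proposition, which is precisely what the paper means by ``the complete reducibility of functors on semisimple algebras and the previous paragraph.'' Your write-up is simply more explicit than the paper's two-line proof, particularly in spelling out the converse direction of \eqref{prop61.1} and the compatibility of the bijection with composition.
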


\begin{proof}
Since over $A$ simple modules are projective, 
claim \eqref{prop61.1} follows from Lemma~\ref{lem9}.
Claim \eqref{prop61.2} follows from \eqref{prop61.1}, the
complete reducibility of functors on semisimple algebras and the
previous paragraph.
\end{proof}

In light of Proposition~\ref{prop61} the problem we consider in this
paper may be viewed as a kind of a categorical generalization of the
problem of solving matrix equations. From Proposition~\ref{prop61} 
we have the following general criterion for solubility of 
functorial equations:

\begin{corollary}\label{cor62}
Let $g(x),h(x)\in\mathbb{Z}_+(x)$. Then the following conditions
are equivalent:
\begin{enumerate}[(a)]
\item\label{cor62.1} There is a finite dimensional basic 
$\Bbbk$-algebra $A$ with $n$ isomorphism classes of simple modules and 
an exact endofunctor $\mathrm{F}$ of $A\text{-}\mathrm{mod}$
such that $g(\mathrm{F})\cong h(\mathrm{F})$. 
\item\label{cor62.2} There is a matrix
$X\in\mathrm{Mat}_{n\times n}(\mathbb{Z}_+)$ such that
$g(X)=h(X)$. 
\end{enumerate}
\end{corollary}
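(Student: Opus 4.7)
The plan is to prove both implications by using Proposition~\ref{prop61} as the central tool, with a semisimple algebra of the form $A\cong \oplus_{i=1}^n \Bbbk$ serving as the bridge between the categorical statement in (a) and the combinatorial statement in (b). In both directions the key observation is that an exact endofunctor of $A\text{-}\mathrm{mod}$ induces a group endomorphism of the Grothendieck group $[A]$, and that this assignment respects composition and direct sums, hence carries any polynomial relation of functors to the same polynomial relation of the associated matrices.

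For the implication \eqref{cor62.1}$\Rightarrow$\eqref{cor62.2}, I would take the exact endofunctor $\mathrm{F}$ of $A\text{-}\mathrm{mod}$ postulated in (a) and form the matrix $M_{\mathrm{F}}\in \mathrm{Mat}_{n\times n}(\mathbb{Z}_+)$ of the induced endomorphism $[\mathrm{F}]$ on $[A]\cong \mathbb{Z}^n$ with respect to the basis $\mathbf{l}$. Because the assignment $\mathrm{G}\mapsto M_{\mathrm{G}}$ sends direct sums of exact functors to sums of matrices and compositions to products, and is invariant under isomorphism of functors, the relation $g(\mathrm{F})\cong h(\mathrm{F})$ descends to the matrix identity $g(M_{\mathrm{F}})=h(M_{\mathrm{F}})$ in $\mathrm{Mat}_{n\times n}(\mathbb{Z}_+)$. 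Thus $X:=M_{\mathrm{F}}$ solves the matrix equation of (b), and the number of rows and columns matches by construction.

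For the converse \eqref{cor62.2}$\Rightarrow$\eqref{cor62.1}, I would take $A:=\oplus_{i=1}^n\Bbbk$, which is basic, finite-dimensional and has exactly $n$ isomorphism classes of simple modules. Given a solution $X\in \mathrm{Mat}_{n\times n}(\mathbb{Z}_+)$ of $g(x)=h(x)$, I invoke the bijection described just before Proposition~\ref{prop61} to produce an exact endofunctor $\mathrm{F}$ on $A\text{-}\mathrm{mod}$ with $M_{\mathrm{F}}=X$. By Proposition~\ref{prop61}\eqref{prop61.2}, the functorial equation $g(\mathrm{F})\cong h(\mathrm{F})$ is equivalent to the matrix equation $g(M_{\mathrm{F}})=h(M_{\mathrm{F}})$, which holds by hypothesis.

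There is no serious obstacle here: the whole argument is essentially bookkeeping around Proposition~\ref{prop61}. The only minor point to be careful about is that in direction \eqref{cor62.1}$\Rightarrow$\eqref{cor62.2} one is not allowed to assume $A$ is semisimple, but this is not needed — only the functorial property of passing to the Grothendieck group is used, and that works over any finite-dimensional $A$. The semisimple case appears only in the easier direction \eqref{cor62.2}$\Rightarrow$\eqref{cor62.1} as a universal target that realizes every admissible matrix.
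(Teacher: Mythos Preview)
Your proof is correct and follows essentially the same approach as the paper's own proof: passing to $M_{\mathrm{F}}$ for \eqref{cor62.1}$\Rightarrow$\eqref{cor62.2}, and invoking Proposition~\ref{prop61} over the semisimple algebra $\oplus_{i=1}^n\Bbbk$ for \eqref{cor62.2}$\Rightarrow$\eqref{cor62.1}. Your version is a bit more explicit about why the assignment $\mathrm{G}\mapsto M_{\mathrm{G}}$ respects the polynomial structure, but the argument is otherwise identical.
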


\begin{proof}
If $A$ and $\mathrm{F}$ are as in  \eqref{cor62.1}, then 
$M_{\mathrm{F}}$ is a solution to the matrix equation 
$g(x)=h(x)$. Hence \eqref{cor62.1} implies \eqref{cor62.2}.

On the other hand, that \eqref{cor62.2} implies \eqref{cor62.1}
in the case of a semisimple algebra $A$ follows from
Proposition~\ref{prop61}. This completes the proof.
\end{proof}

Note that a tensor product of a semisimple algebra and a local 
algebra is a direct sum of local algebras. Therefore
we would like to finish this section with the following observation,
which might be used for reduction of certain classification problems to
corresponding problems over semisimple algebras.

\begin{proposition}\label{prop91}
Let $A$ be a finite-dimensional algebra and $\mathrm{F}_1,\dots,
\mathrm{F}_k$ be a collection of selfadjoint endofunctors on
$A\text{-}\mathrm{mod}$ such that the following conditions
are satisfied:
\begin{enumerate}[(a)]
\item \label{prop91.1} For every $i=1,\dots,k$ we have 
$M_{\mathrm{F}_i}=M_{\mathrm{F}_i}^t$.
\item \label{prop91.2} For some field $\mathbb{K}$ of characteristic
zero the space $\mathbb{K}\otimes_{\mathbb{Z}}
[A]$ does not contain any proper subspace 
invariant under all $[\mathrm{F}_i]$.
\end{enumerate}
Then $A$ is a direct sum of local algebras of the same dimension.
\end{proposition}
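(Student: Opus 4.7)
\emph{The plan is as follows.} First, I would establish that the Cartan matrix $C=(c_{ij})_{i,j=1}^n$, with $c_{ij}:=[P_j:L_i]$, commutes with each matrix $M_{\mathrm{F}_s}$. The point is that $\{[L_j]\}$ and $\{[P_j]\}$ are two bases of $[A]$ related by $C$ (via $[P_j]=\sum_i c_{ij}[L_i]$), so for any linear endomorphism $T$ of $[A]$ the matrices $M$ and $N$ of $T$ in these two bases satisfy $MC=CN$. Taking $T=[\mathrm{F}_s]$ and combining Lemma~\ref{lem9} (which identifies $N_{\mathrm{F}_s}$ with $M_{\mathrm{F}_s}^t$) with assumption (a), one obtains $M_{\mathrm{F}_s}C=CM_{\mathrm{F}_s}$.

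Second, I would apply Schur's lemma: by hypothesis (b), the $\mathbb{K}$-subalgebra of $\mathrm{End}_{\mathbb{K}}(V)$ generated by $M_{\mathrm{F}_1},\dots,M_{\mathrm{F}_k}$ acts irreducibly on $V:=\mathbb{K}\otimes_{\mathbb{Z}}[A]$, so its commutant is a division $\mathbb{K}$-algebra $D$ containing $C$. Because any commutative subring of a division algebra is a field, $\mathbb{K}[C]\subseteq D$ is a field, whence the minimal polynomial $m_C(x)\in\mathbb{K}[x]$ of $C$ is irreducible.

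Third, and here lies the heart of the argument, I would show that $\deg m_C=1$, i.e.\ $C=dI$ for some positive integer $d$. Since $C$ has nonnegative integer entries with diagonal at least $1$, the Perron--Frobenius theorem gives a real eigenvalue $\rho=\rho(C)\ge 1$ with a nonnegative eigenvector, and the $\rho$-eigenspace of $C$ is preserved by every $M_{\mathrm{F}_s}$. If $\rho\in\mathbb{K}$, this $\rho$-eigenspace is a nonzero $\mathbb{K}$-subspace invariant under all $M_{\mathrm{F}_s}$, hence equals $V$ by (b), so $C=\rho I$. The delicate case $\rho\notin\mathbb{K}$ is the main obstacle: here one should exploit the $*$-algebra structure on the algebra generated by the $M_{\mathrm{F}_s}$ coming from (a)---which makes the decomposition of $V\otimes_{\mathbb{K}}\overline{\mathbb{K}}$ into simple modules multiplicity-free and orthogonal with respect to the form making $\{[L_i]\}$ orthonormal---together with the Galois action on the roots of $m_C$, and use the nonnegativity of the entries of $C$ to rule out that these Galois-conjugate eigenvalues of $C$ are distinct.

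Finally, once $C=dI$ is established, the vanishing $c_{ij}=0$ for $i\ne j$ translates to $e_iAe_j=0$ for the primitive idempotents $e_i$ of $A$; this forces every $e_i$ to be central, so $A=\bigoplus_{i=1}^n e_iAe_i$ decomposes as a direct sum of $n$ local algebras, each of dimension $d$, as required.
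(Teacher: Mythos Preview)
Your approach is essentially the paper's: show that the Cartan matrix $C$ commutes with every $[\mathrm{F}_i]$ (via Lemma~\ref{lem9} and hypothesis~(a)), invoke Schur's Lemma on the irreducible module $\mathbb{K}\otimes_{\mathbb{Z}}[A]$ to conclude that $C$ is scalar, and read off the structure of $A$. The paper's proof is literally those three sentences; your first, second, and fourth steps reproduce them, with your final step making explicit what the paper's ``The claim follows'' means.

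Where you diverge is your third step. You correctly observe that over a field $\mathbb{K}$ of characteristic zero that is not assumed algebraically closed, Schur's Lemma only yields that the commutant is a division algebra, so that $\mathbb{K}[C]$ is a field and $m_C$ is irreducible over $\mathbb{K}$; it does not immediately force $C$ to be scalar. The paper does not engage with this at all: it simply asserts ``from the Schur Lemma it follows that $C$ is a multiple of the identity matrix,'' implicitly treating the situation as if $\mathbb{K}$ were algebraically closed (or the representation absolutely irreducible). Your Perron--Frobenius and $*$-structure sketch to close this gap is reasonable but, as you yourself flag, the case $\rho\notin\mathbb{K}$ is left unresolved. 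So your proposal is not less complete than the paper's own argument---you have made explicit a subtlety the paper glosses over, and you are at least honest about where the difficulty lies. If you are willing to apply Schur as the paper does, your entire third step can be dropped.
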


\begin{proof}
Let $C$ denote the Cartan matrix of $A$ (i.e. the matrix of multiplicities
of simple modules in projective modules). Then 
$[\mathrm{F}_i]C=C[\mathrm{F}_i]$ for all  $i=1,\dots,k$ by
Lemma~\ref{lem8} and condition \eqref{prop91.1}. Since the 
representation $\mathbb{K}\otimes_{\mathbb{Z}}[A]$ of the associative
algebra, generated by the $[\mathrm{F}_i]$, $i=1,\dots,k$, is irreducible
by \eqref{prop91.2}, from the Schur Lemma it follows that $C$ is 
a multiple of the identity matrix. The claim follows.
\end{proof}

\section{Restriction to centralizer subalgebras}\label{s7}

Let $X$ be a projective $A$-module and $B=\mathrm{End}_A(X)^{\mathrm{op}}$
(the corresponding centralizer subalgebra of $A$).
Then $X$ has the natural structure of an $A\text{-}B$-bimodule.
Denote by $\mathrm{add}(X)$ the additive closure of $X$, that is the
full subcategory of $A\text{-}\mathrm{mod}$, which consists of all modules 
$Y$, isomorphic to direct sums of (some) direct summands of $X$.
Consider the full subcategory $\mathcal{X}=\mathcal{X}_X$ of 
$A\text{-}\mathrm{mod}$, which consists of all modules $Y$ 
admitting a two step resolution
\begin{equation}\label{eq071}
X_1\to X_0 \to Y\to 0,\quad\quad
X_0,X_1\in\mathrm{add}(X).
\end{equation}
The functor $\Phi:=\mathrm{Hom}_{A}(X,{}_-):\mathcal{X}\to 
B\text{-}\mathrm{mod}$ is an equivalence, see \cite[\S~5]{Au}.

\begin{proposition}\label{prop71}
Assume that $\mathrm{F}$ is a selfadjoint endofunctor on 
$A\text{-}\mathrm{mod}$ such that $\mathrm{F}X\in \mathrm{add}(X)$.
Then the following holds:
\begin{enumerate}[(i)]
\item\label{prop71.1} The functor $\mathrm{F}$ preserves $\mathcal{X}$
and induces (via $\Phi$) a selfadjoint 
en\-do\-func\-tor $\overline{\mathrm{F}}$ 
on  $B\text{-}\mathrm{mod}$.
\item\label{prop71.2} If $g(x),h(x)\in\mathbb{Z}_+[x]$ and
$g(\mathrm{F})\cong h(\mathrm{F})$, then 
$g(\overline{\mathrm{F}})\cong h(\overline{\mathrm{F}})$.
\end{enumerate}
\end{proposition}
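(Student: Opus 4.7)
The plan is to reduce everything to the equivalence $\Phi:\mathcal{X}\to B\text{-}\mathrm{mod}$ and transport the structure through it. First, for part \eqref{prop71.1}, I observe that selfadjointness of $\mathrm{F}$ forces exactness (as recalled in the paper just before Lemma~\ref{lem9}), and additivity combined with the hypothesis $\mathrm{F}X\in\mathrm{add}(X)$ implies $\mathrm{F}(\mathrm{add}(X))\subseteq\mathrm{add}(X)$. Thus if $Y\in\mathcal{X}$ has a two-step resolution $X_1\to X_0\to Y\to 0$ with $X_0,X_1\in\mathrm{add}(X)$, then applying $\mathrm{F}$ yields a right-exact sequence $\mathrm{F}X_1\to\mathrm{F}X_0\to\mathrm{F}Y\to 0$ with the first two terms again in $\mathrm{add}(X)$, so $\mathrm{F}Y\in\mathcal{X}$.

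Next I define $\overline{\mathrm{F}}:=\Phi\circ\mathrm{F}\circ\Phi^{-1}$, where $\Phi^{-1}$ is a fixed quasi-inverse to $\Phi$. To verify that $\overline{\mathrm{F}}$ is selfadjoint, take $M,N\in B\text{-}\mathrm{mod}$ and write $M\cong\Phi\widetilde{M}$, $N\cong\Phi\widetilde{N}$ with $\widetilde{M},\widetilde{N}\in\mathcal{X}$. Using that $\Phi$ is fully faithful and the adjunction for $\mathrm{F}$ on $A\text{-}\mathrm{mod}$, one obtains a chain of natural isomorphisms
\begin{displaymath}
\mathrm{Hom}_B(\overline{\mathrm{F}}M,N)\cong
\mathrm{Hom}_A(\mathrm{F}\widetilde{M},\widetilde{N})\cong
\mathrm{Hom}_A(\widetilde{M},\mathrm{F}\widetilde{N})\cong
\mathrm{Hom}_B(M,\overline{\mathrm{F}}N),
\end{displaymath}
and naturality follows from naturality of the adjunction isomorphism for $\mathrm{F}$. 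The fact that the same functor serves as both left and right adjoint is immediate from the symmetric statement for $\mathrm{F}$.

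For part \eqref{prop71.2}, the key observation is that the assignment $\mathrm{G}\mapsto\Phi\circ\mathrm{G}\circ\Phi^{-1}$ is compatible with composition and direct sum up to natural isomorphism: indeed, $\overline{\mathrm{F}}^k\cong\Phi\circ\mathrm{F}^k\circ\Phi^{-1}$ by telescoping the intermediate factors $\Phi^{-1}\Phi\cong\mathrm{ID}$, and direct sums commute with $\Phi$ since it is an additive equivalence. Therefore for any polynomial $p(x)\in\mathbb{Z}_+[x]$ one has $p(\overline{\mathrm{F}})\cong\Phi\circ p(\mathrm{F})\circ\Phi^{-1}$, and so $g(\mathrm{F})\cong h(\mathrm{F})$ transports to $g(\overline{\mathrm{F}})\cong h(\overline{\mathrm{F}})$.

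I expect the only subtle point to be invariance of $\mathcal{X}$ under $\mathrm{F}$, which rests on the combination of exactness and the hypothesis $\mathrm{F}X\in\mathrm{add}(X)$. Once this is established, everything else is a formal consequence of the Auslander equivalence $\Phi$ together with the adjunction for $\mathrm{F}$, and no further input about the structure of $A$ or $B$ is needed.
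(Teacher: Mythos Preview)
Your proof is correct and follows essentially the same route as the paper: apply $\mathrm{F}$ to the two-step resolution to see $\mathrm{F}Y\in\mathcal{X}$, set $\overline{\mathrm{F}}=\Phi\mathrm{F}\Phi^{-1}$, and then transport the isomorphism $g(\mathrm{F})\cong h(\mathrm{F})$ through $\Phi$. The paper is terser (it asserts selfadjointness of $\overline{\mathrm{F}}$ without writing out the Hom chain, and phrases part~\eqref{prop71.2} as ``restriction to $\mathcal{X}$''), but your added detail is sound and changes nothing substantive.
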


\begin{proof}
Applying $\mathrm{F}$ to the exact sequence \eqref{eq071} we obtain
an exact sequence
\begin{displaymath}
\mathrm{F} X_1\to \mathrm{F}X_0 \to \mathrm{F}Y\to 0.
\end{displaymath}
Here both $\mathrm{F} X_0$ and $\mathrm{F} X_1$ are in $\mathrm{add}(X)$
by assumption and hence $\mathrm{F}Y\in \mathcal{X}$. Therefore 
$\mathrm{F}$ preserves $\mathcal{X}$ and hence 
$\overline{\mathrm{F}}:=\Phi\mathrm{F}\Phi^{-1}$ is a selfadjoint 
endofunctor on  $B\text{-}\mathrm{mod}$. This proves claim \eqref{prop71.1}.
Claim \eqref{prop71.2} follows from the definition of 
$\overline{\mathrm{F}}$ by restricting any isomorphism 
$g(\mathrm{F})\cong h(\mathrm{F})$ to the subcategory $\mathcal{X}$,
which is preserved by both $g(\mathrm{F})$ and $h(\mathrm{F})$
by claim \eqref{prop71.1}. This completes the proof.
\end{proof}

\begin{corollary}\label{cor72}
Assume that $X$ is a multiplicity free direct sum of all indecomposable
projective-injective $A$-modules and $X\neq 0$. Then we have the following: 
\begin{enumerate}[(i)]
\item\label{prop72.1} Any selfadjoint endofunctor $\mathrm{F}$ 
on $A\text{-}\mathrm{mod}$ induces a selfadjoint endofunctor $\overline{\mathrm{F}}$ on  $B\text{-}\mathrm{mod}$.
\item\label{prop72.2} The map $\mathrm{F}\mapsto \overline{\mathrm{F}}$
is functorial in $\mathrm{F}$.
\end{enumerate}
\end{corollary}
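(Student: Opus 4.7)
The plan is to reduce both claims to Proposition~\ref{prop71} by verifying, for every selfadjoint endofunctor $\mathrm{F}$ on $A\text{-}\mathrm{mod}$, the hypothesis $\mathrm{F}X\in\mathrm{add}(X)$. The key observation is that a selfadjoint functor preserves projective-injective modules. Indeed, being selfadjoint, $\mathrm{F}$ is both a left and a right adjoint of $\mathrm{F}$; since $\mathrm{F}$ is exact, a standard argument (a functor with an exact right adjoint preserves projectives, and dually for injectives) shows that $\mathrm{F}$ sends projectives to projectives and injectives to injectives, and hence projective-injectives to projective-injectives.

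By hypothesis, $X$ is a multiplicity-free direct sum of \emph{all} indecomposable projective-injective $A$-modules, so every projective-injective $A$-module lies in $\mathrm{add}(X)$. Applying this to $\mathrm{F}X$, which is projective-injective by the previous paragraph, we obtain $\mathrm{F}X\in\mathrm{add}(X)$. Proposition~\ref{prop71}\eqref{prop71.1} then yields a selfadjoint endofunctor $\overline{\mathrm{F}}:=\Phi\mathrm{F}\Phi^{-1}$ on $B\text{-}\mathrm{mod}$, establishing claim \eqref{prop72.1}.

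For claim \eqref{prop72.2}, I would interpret ``functorial in $\mathrm{F}$'' as saying that if $\eta:\mathrm{F}\to \mathrm{G}$ is a natural transformation of selfadjoint endofunctors on $A\text{-}\mathrm{mod}$, then the assignment extends to a natural transformation $\overline{\eta}:\overline{\mathrm{F}}\to\overline{\mathrm{G}}$ compatibly with identities and vertical composition. Since both $\mathrm{F}$ and $\mathrm{G}$ preserve $\mathcal{X}$ by claim \eqref{prop72.1} (via Proposition~\ref{prop71}\eqref{prop71.1}), the restriction $\eta|_{\mathcal{X}}$ is a natural transformation of endofunctors of $\mathcal{X}$, and I define $\overline{\eta}:=\Phi\ast\eta|_{\mathcal{X}}\ast\Phi^{-1}$ using the equivalence $\Phi:\mathcal{X}\overset{\sim}{\to} B\text{-}\mathrm{mod}$. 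Both restriction to a full subcategory preserved by the functors involved, and horizontal conjugation by an equivalence, are strictly functorial operations on natural transformations, so $\overline{\mathrm{id}_{\mathrm{F}}}=\mathrm{id}_{\overline{\mathrm{F}}}$ and $\overline{\eta'\circ\eta}=\overline{\eta'}\circ\overline{\eta}$ follow at once.

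There is no genuinely hard step; the only point that requires care is the preservation of projective-injectives by selfadjoint functors, which is the bridge that lets us invoke Proposition~\ref{prop71}. Once this is established, both parts of the corollary are direct consequences.
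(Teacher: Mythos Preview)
Your proposal is correct and follows essentially the same approach as the paper: verify $\mathrm{F}X\in\mathrm{add}(X)$ by observing that selfadjoint functors preserve projective-injective modules, then invoke Proposition~\ref{prop71}, and handle functoriality by noting that $\mathrm{F}\mapsto\overline{\mathrm{F}}$ is (up to conjugation by $\Phi$) restriction to an invariant subcategory. Your write-up is in fact more detailed than the paper's own proof, which compresses the functoriality argument into a single sentence.
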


\begin{proof}
From the definition of $X$ we have that the category $\mathrm{add}(X)$
is just the full subcategory of $A\text{-}\mathrm{mod}$ consistsing of
all projective-injective modules. If $\mathrm{F}$ is a selfadjoint 
endofunctor on $A\text{-}\mathrm{mod}$, then $\mathrm{F}$ preserves 
both projective and injective modules and hence preserves $\mathrm{add}(X)$.
Therefore claim \eqref{prop72.1} follows from 
Proposition~\ref{prop71}\eqref{prop71.1}. Up to conjugation with 
the equivalence $\Phi$, the map $\mathrm{F}\mapsto \overline{\mathrm{F}}$
is just the restriction map to an invariant subcategory, which is
functorial.
\end{proof}

Until the end of this section we assume that $X$ is projective-injective.
Recall (see \cite{Ta,KSX,MS4}) that $A$ is said to have the {\em double
centralizer property} for $X$ provided that there is an exact sequence
\begin{equation}\label{eq072}
{}_AA\hookrightarrow X_0\overset{\alpha}\to 
X_1,\quad\quad X_0,X_1\in\mathrm{add}(X).
\end{equation}
The name comes from the observation, see \cite{Ta}, that in this case
the actions of $A$ and $B$ on $X$ are exactly the centralizers of each
other. Examples of such situations include blocks of various generalizations
of the BGG category $\mathcal{O}$, see \cite{MS4} for details. 
The following result can be seen as a generalization of 
\cite[Theorem~1.8]{St2}, where a similar result was obtained for
projective functors on the category $\mathcal{O}$ (and its
parabolic version).

\begin{theorem}\label{prop73}
Assume that $X$ is projective-injective and that $A$ has the double
centralizer property for $X$. Then the functor 
$\mathrm{F}\mapsto \overline{\mathrm{F}}$ from 
Corollary~\ref{cor72} is full and faithful.
\end{theorem}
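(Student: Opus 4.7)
The plan is to show that for selfadjoint endofunctors $\mathrm{F},\mathrm{G}$ on $A\text{-}\mathrm{mod}$ the assignment $\eta\mapsto \overline{\eta}$ is a bijection from natural transformations $\mathrm{F}\to \mathrm{G}$ to natural transformations $\overline{\mathrm{F}}\to \overline{\mathrm{G}}$. Since $\mathrm{F}$ and $\mathrm{G}$ are selfadjoint, both are exact, and I will freely use the Eilenberg--Watts identification $\mathrm{F}\cong V_{\mathrm{F}}\otimes_A{}_-$, where $V_{\mathrm{F}}:=\mathrm{F}\,{}_AA$ carries its natural $A$-bimodule structure (the right action being given by the $\mathrm{F}$-image of right-multiplication endomorphisms $r_a:{}_AA\to{}_AA$). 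Under $\Phi$, a natural transformation $\overline{\eta}$ corresponds to a natural transformation $\widetilde{\eta}$ of the restrictions of $\mathrm{F}$ and $\mathrm{G}$ to $\mathcal{X}$. The central tool is the double centralizer sequence ${}_AA\overset{i}{\hookrightarrow} X_0\overset{f}{\to} X_1$ from \eqref{eq072}, which after applying the exact functors $\mathrm{F}$ and $\mathrm{G}$ realizes $V_{\mathrm{F}}=\ker(\mathrm{F}f)$ and $V_{\mathrm{G}}=\ker(\mathrm{G}f)$.

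For faithfulness, suppose $\eta:\mathrm{F}\to\mathrm{G}$ induces $\overline{\eta}=0$, so $\widetilde{\eta}_Y=0$ for every $Y\in\mathcal{X}$. Naturality of $\eta$ with respect to $i$ yields $\mathrm{G}i\circ\eta_A=\widetilde{\eta}_{X_0}\circ \mathrm{F}i=0$; since $\mathrm{G}i$ is injective, $\eta_A=0$, and Eilenberg--Watts forces $\eta=0$.

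For fullness, start from $\widetilde{\eta}$ on $\mathcal{X}$. Naturality of $\widetilde{\eta}$ with respect to $f$ shows $\widetilde{\eta}_{X_0}$ sends $\ker(\mathrm{F}f)=V_{\mathrm{F}}$ into $\ker(\mathrm{G}f)=V_{\mathrm{G}}$; define $\eta_A$ to be this restriction. The main issue is to verify that $\eta_A$ is an $A$-bimodule map, i.e., that it intertwines $\mathrm{F}(r_a)$ and $\mathrm{G}(r_a)$ for every $a\in A$. Here the hypothesis that $X$ is projective-injective is used crucially: since $X_0\in\mathrm{add}(X)$ is injective, the composition $i\circ r_a:{}_AA\to X_0$ extends along $i$ to some $\widetilde{r}_a:X_0\to X_0$ with $\widetilde{r}_a\circ i=i\circ r_a$. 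Applying naturality of $\widetilde{\eta}$ to the morphism $\widetilde{r}_a$ (which lives in $\mathrm{add}(X)\subset \mathcal{X}$), precomposing with $\mathrm{F}i$, and invoking the injectivity of $\mathrm{G}i$ then gives $\mathrm{G}(r_a)\circ\eta_A=\eta_A\circ\mathrm{F}(r_a)$. Eilenberg--Watts thereupon produces a natural transformation $\eta:\mathrm{F}\to\mathrm{G}$ extending $\eta_A$.

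It remains to check $\eta|_{\mathcal{X}}=\widetilde{\eta}$. The key step is the case $Y=X_0$; the general $Y\in\mathcal{X}$ then follow by naturality and right-exactness applied to the resolution \eqref{eq071} (after enlarging $X_0$ so that $X$ is a direct summand of it, so that every indecomposable of $\mathrm{add}(X)$ is a summand of some power of $X_0$). For each $x\in X_0$ consider the $A$-linear map $f_x:{}_AA\to X_0$, $a\mapsto ax$, and choose an extension $\widetilde{f}_x:X_0\to X_0$ with $\widetilde{f}_x\circ i=f_x$ using injectivity of $X_0$. Under $\mathrm{F}X_0\cong V_{\mathrm{F}}\otimes_A X_0$ one identifies $\mathrm{F}f_x(v)=v\otimes x$, so the elements $\{\mathrm{F}f_x(v):x\in X_0,\ v\in V_{\mathrm{F}}\}$ span $\mathrm{F}X_0$. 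Naturality of both $\eta$ and $\widetilde{\eta}$ with respect to $\widetilde{f}_x$, combined with the defining identity $\eta_A=\widetilde{\eta}_{X_0}|_{V_{\mathrm{F}}}$, forces $\eta_{X_0}$ and $\widetilde{\eta}_{X_0}$ to agree on this generating set and hence everywhere. The technically delicate step, and the reason the injectivity of $X$ is essential, is the bimodule compatibility of $\eta_A$: without the extensions $\widetilde{r}_a$ there would be no way to promote the right $A$-action on $V_{\mathrm{F}}$ to something visible from inside $\mathcal{X}$.
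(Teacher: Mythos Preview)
Your proof is correct and follows essentially the same strategy as the paper: faithfulness via the double-centralizer sequence, and fullness by restricting $\widetilde{\eta}_{X_0}$ to $V_{\mathrm{F}}=\ker(\mathrm{F}f)$ and then checking bimodule compatibility by using injectivity of $X_0$ to extend each right multiplication $r_a$ to an endomorphism of $X_0$. Your verification that $\eta|_{\mathcal{X}}=\widetilde{\eta}$ is more explicit than the paper's terse ``by construction'', and your bimodule check uses only the extension $\widetilde{r}_a:X_0\to X_0$ rather than the full commutative cube involving $X_1$, but these are presentational rather than strategic differences.
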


\begin{proof}[Proof of faithfulness.]
Let $\mathrm{F}$ and $\mathrm{G}$ be two selfadjoint endofunctors on
$A\text{-}\mathrm{mod}$ and $\xi:\mathrm{F}\to\mathrm{G}$ be a
natural transformation. Assume that 
$\overline{\xi}:\overline{\mathrm{F}}\to\overline{\mathrm{G}}$ is zero.
Since both $\mathrm{F}$ and $\mathrm{G}$ are exact, from \eqref{eq072}
we have the following commutative diagram with exact rows:
\begin{displaymath}
\xymatrix{ 
0\ar[rr] && \mathrm{F}{}_AA \ar@{^{(}->}[rr]\ar[d]^{\xi_{{}_AA}}
&&\mathrm{F}X_0\ar[rr]\ar[d]^{\xi_{X_0}}&&
\mathrm{F}X_1\ar[d]^{\xi_{X_1}}\\
0\ar[rr] && \mathrm{G}{}_AA \ar@{^{(}->}[rr]
&&\mathrm{G}X_0\ar[rr]&&\mathrm{G}X_1
}
\end{displaymath}
By assumption, $\overline{\xi}$ is zero, which means that 
both $\xi_{X_0}$ and $\xi_{X_1}$ are zero. Therefore 
$\xi_{{}_AA}$ is zero as well.

Now for any $M\in A\text{-}\mathrm{mod}$ consider the first two steps
of the projective resolution of $M$:
\begin{equation}\label{eq073}
P_1\to P_0\to M\to 0.
\end{equation}
Since both $\mathrm{F}$ and $\mathrm{G}$ are exact, from \eqref{eq073}
we have the following commutative diagram with exact rows:
\begin{displaymath}
\xymatrix{ 
\mathrm{F}P_1 \ar[rr]\ar[d]^{\xi_{P_1}}
&&\mathrm{F}P_0\ar[rr]\ar[d]^{\xi_{P_0}}&&
\mathrm{F}M\ar[d]^{\xi_{M}}\ar[rr]&&0\\
\mathrm{G}P_1 \ar[rr]&&\mathrm{G}P_0\ar[rr]&&
\mathrm{G}M\ar[rr]&&0
}
\end{displaymath}
As $\xi_{{}_AA}$ is zero by the previous paragraph
and $P_0,P_1\in\mathrm{add}({}_AA)$, we have that 
both $\xi_{P_0}$ and $\xi_{P_1}$ are zero. Therefore 
$\xi_{M}$ is zero as well. This shows that 
the natural transformation $\xi$ is zero, which establishes 
faithfullness of the functor 
$\mathrm{F}\mapsto \overline{\mathrm{F}}$.
\end{proof}

\begin{proof}[Proof of fullness.]
Let $\mathrm{F}$ and $\mathrm{G}$ be two selfadjoint endofunctors on
$A\text{-}\mathrm{mod}$ and $\xi:\overline{\mathrm{F}}\to
\overline{\mathrm{G}}$ be a natural transformation. Then we have the
following commutative diagram:
\begin{displaymath}
\xymatrix{ 
\overline{\mathrm{F}}\Phi X_0\ar[rr]^{\overline{\mathrm{F}}\Phi\alpha} 
\ar[d]_{\xi_{\Phi X_0}}
&&\overline{\mathrm{F}}\Phi X_1\ar[d]^{\xi_{\Phi X_1}}\\
\overline{\mathrm{G}}\Phi X_0\ar[rr]^{\overline{\mathrm{G}}\Phi\alpha}
&&\overline{\mathrm{G}}\Phi X_1
}
\end{displaymath}
Applying $\Phi^{-1}$ we obtain the following diagram, the solid
part of which commutes:
\begin{equation}\label{eq076}
\xymatrix{ 
\mathrm{F}{}_AA\ar@{^{(}->}[rr]\ar@{.>}[d]^{\eta}
&&\mathrm{F} X_0\ar[rr]^{\mathrm{F}\alpha} 
\ar[d]_{\Phi^{-1}\xi_{\Phi X_0}}
&&\mathrm{F} X_1\ar[d]^{\Phi^{-1}\xi_{\Phi X_1}}\\
\mathrm{G}{}_AA\ar@{^{(}->}[rr]
&&\mathrm{G} X_0\ar[rr]^{\mathrm{G}\alpha}
&&\mathrm{G} X_1
}
\end{equation}
Because of the commutativity of the solid part, the diagram extends 
uniquely to a commutative diagram by the dotted arrow $\eta$.
We claim that $\eta$ is, in fact,  a bimodule homomorphism. Indeed, 
any homomorphism $f:{}_AA\to {}_AA$ can be extended, by the injectivity
of $X$, to a commutative diagram as follows:
\begin{equation}\label{eq077}
\xymatrix{ 
{}_AA\ar@{^{(}->}[rr]\ar[d]^{f}
&&X_0\ar[rr]^{\alpha} 
\ar[d]_{f_0}
&&X_1\ar[d]^{f_1}\\
{}_AA\ar@{^{(}->}[rr]
&&X_0\ar[rr]^{\alpha}
&&X_1
}
\end{equation}
Consider the following diagram:
\begin{equation}\label{eq078}
\xymatrix@!=0.6pc{ 
&&\mathrm{Ker}(\mathrm{F}\alpha)\ar@{^{(}->}[rrrr]
\ar@{.>}[dd]|->>>>>>>>>{\eta}
\ar@{-->}[lld]_{\mathrm{F}f}
&&&&\mathrm{F} X_0\ar[rrrr]^{\mathrm{F}\alpha} 
\ar[dd]|->>>{\Phi^{-1}\xi_{\Phi X_0}}
\ar@{-->}[lld]_{\mathrm{F}f_0}
&&&&\mathrm{F} X_1\ar[dd]|->>>>>>>{\Phi^{-1}\xi_{\Phi X_1}}
\ar@{-->}[lld]_{\mathrm{F}f_1}\\ 
\mathrm{Ker}(\mathrm{F}\alpha)\ar@{^{(}->}[rrrr]\ar@{.>}[dd]^{\eta}
&&&&\mathrm{F} X_0\ar[rrrr]^>>>>>>>{\mathrm{F}\alpha} 
\ar[dd]|->>>{\Phi^{-1}\xi_{\Phi X_0}}
&&&&\mathrm{F} X_1\ar[dd]|->>>{\Phi^{-1}\xi_{\Phi X_1}}\\ 
&&\mathrm{Ker}(\mathrm{G}\alpha)\ar@{^{(}->}[rrrr]
\ar@{-->}[lld]_{\mathrm{G}f}
&&&&\mathrm{G} X_0\ar[rrrr]^>>>>>{\mathrm{G}\alpha}
\ar@{-->}[lld]^{\mathrm{G}f_0}
&&&&\mathrm{G} X_1\ar@{-->}[lld]^{\mathrm{G}f_1}
\\
\mathrm{Ker}(\mathrm{G}\alpha)\ar@{^{(}->}[rrrr]
&&&&\mathrm{G} X_0\ar[rrrr]_{\mathrm{G}\alpha}
&&&&\mathrm{G} X_1
}
\end{equation}
The upper face of the diagram \eqref{eq078} commutes as it coincides
with the image of the commutative diagram \eqref{eq077} under $\mathrm{F}$.
Similarly, the lower face of the diagram \eqref{eq078} commutes as it 
coincides with the image of the commutative diagram \eqref{eq077} 
under $\mathrm{G}$. The front and the back faces coincide with
\eqref{eq076} and hence commute. The right and the middle square sections
commute as $\xi$ is a natural transformation. This implies that 
the whole diagram commutes, showing that $\eta$ is indeed a bimodule
map from  $\mathrm{F}$ to $\mathrm{G}$.

This means that $\eta$ defines a 
natural transformation from $\mathrm{F}$ to $\mathrm{G}$.
By construction, we have $\xi=\overline{\eta}$, which proves that 
the functor $\mathrm{F}\mapsto \overline{\mathrm{F}}$ is full.
\end{proof}

Unfortunately, the functor $\mathrm{F}\mapsto \overline{\mathrm{F}}$ 
from Corollary~\ref{cor72} is not dense (in particular, not an 
equivalence between the monoidal categories of selfadjoint 
endofunctors on $A\text{-}\mathrm{mod}$ and $B\text{-}\mathrm{mod}$)
in the general case. Let $\mathrm{G}$ be a selfadjoint 
endofunctors on $B\text{-}\mathrm{mod}$ and assume that 
$\mathrm{G}=\overline{\mathrm{F}}$ for some selfadjoint 
endofunctors on $A\text{-}\mathrm{mod}$. Then from
\eqref{eq072} we have 
\begin{equation}\label{eq0705}
\mathrm{F}{}_AA=\mathrm{Ker}(\Phi^{-1}\mathrm{G}\Phi\alpha)
\end{equation}
(as a bimodule, with the induced action on morphisms), 
which uniquely defines the functor $\mathrm{F}$ (see
\cite[Chapter~II]{Ba}). However, here is an example of $A$, $X$
and $\mathrm{G}$ for which the bimodule 
$\mathrm{Ker}(\Phi^{-1}\mathrm{G}\Phi\alpha)$ defines only a
right exact (and hence not selfadjoint) functor:

\begin{example}\label{ex79}
{\rm 
Let $A$ be the algebra of the following quiver with relations:
\begin{displaymath}
\xymatrix{ 
1\ar@(ur,dr)[]^{x} && 2\ar@/^/[r]^{a} & 3\ar@/^/[l]^{b}
}\quad\quad\quad ab=x^2=0
\end{displaymath}
The indecomposable projective $A$-modules look as follows:
\begin{displaymath}
P_1:\quad
\xymatrix{
1\ar[d]^x\\1
}\quad\quad\quad\quad
P_2:\quad
\xymatrix{
2\ar[d]^a\\3\ar[d]^b\\2
}\quad\quad\quad\quad
P_3:\quad
\xymatrix{
3\ar[d]^b\\2
}
\end{displaymath}
The modules $P_1$ and $P_2$ are injective, so we take $X=P_1\oplus P_2$
and have that $B$ is isomorphic to the algebra of the following quiver 
with relations:
\begin{equation}\label{eq0701}
\xymatrix{ 
1\ar@(ur,dr)[]^{x} && 2\ar@(ur,dr)[]^{y}
}\quad\quad\quad x^2=y^2=0
\end{equation}
(here $y=ba$). The double centralizer property is guaranteed by the
fact that the first two steps of the injective coresolution of $P_3$
are as follows:
\begin{displaymath}
0\to P_3\to P_2\overset{\beta}{\to} P_2,
\end{displaymath}
where $\beta$ is the right multiplication with the element $ba$.
Let $\varphi:B\to B$ be the involutive automorphism of $B$ given by
the automorphism of the quiver \eqref{eq0701} swapping the vertices.
Then $\mathrm{G}:={}_{\varphi}B\otimes_{B}{}_-$ is a selfadjoint 
autoequivalence of $B\text{-}\mathrm{mod}$ (see Proposition~\ref{prop2}). 
Assume that $\mathrm{F}$ is a right exact endofunctor on 
$A\text{-}\mathrm{mod}$ given by \eqref{eq0705}. Then the restriction of 
$\mathrm{F}$ to $\mathrm{add}(X)$ is isomorphic to $\mathrm{G}$, which 
implies $\mathrm{F}P_1\cong P_2$. For $i=1,2,3$ we denote by $L_i$ the 
simple head of $P_i$. Applying $\mathrm{F}$ to the exact
sequence $P_1\to P_1\tto L_1$, we get the exact sequence
$P_2\to P_2\tto \mathrm{F}L_1$, which implies that the module
$\mathrm{F}L_1$ is isomorphic to the following module:
\begin{displaymath}
N:\quad \xymatrix{
2\ar[d]^a\\3
}
\end{displaymath}
Now, applying $\mathrm{F}$ to the short exact
sequence $L_1\hookrightarrow P_1\tto L_1$ we obtain the sequence
\begin{displaymath}
N\to P_2\tto N, 
\end{displaymath}
which is not exact. This means that $\mathrm{F}$ is not exact and thus
cannot be selfadjoint.
}
\end{example}

It would be interesting to know when the functor 
$\mathrm{F}\mapsto \overline{\mathrm{F}}$ 
from Corollary~\ref{cor72} is dense.

\section{Invariant Serre subcategories and quotients}\label{s8}

For $S\subset \{1,2,\dots,n\}$ set $S'=\{1,2,\dots,n\}\setminus S$
and let $\mathcal{N}_S$
denote  the full subcategory of $A\text{-}\mathrm{mod}$, which 
consists of all modules $N$ for which $[N:L_i]\neq 0$ implies
$i\in S$. Then $\mathcal{N}_S$ is a Serre subcategory of 
$A\text{-}\mathrm{mod}$ and, moreover, any Serre subcategory of 
$A\text{-}\mathrm{mod}$ equals $\mathcal{N}_S$ for some 
$S$ as above. Both $\mathcal{N}_S$ and the quotient $\mathcal{Q}_S:=
A\text{-}\mathrm{mod}/\mathcal{N}_S$ are abelian categories.
Recall (see e.g. \cite[Chapter~III]{Ga} or \cite[Chapter~15]{Fa}) 
that the quotient $\mathcal{Q}_S$ has the same objects as 
$A\text{-}\mathrm{mod}$ and for objects $M,N$ we have
\begin{displaymath}
\mathrm{Hom}_{\mathcal{Q}_S}(M,N)=\lim_{\longrightarrow}
\mathrm{Hom}_A(M',N/N'),
\end{displaymath}
where $M'\subset M$ and $N'\subset N$ are such that 
$M/M',N'\in \mathcal{N}_S$. As we are working with finite dimensional
modules, the space  $\mathrm{Hom}_{\mathcal{Q}_S}(M,N)$ can be 
alternatively described as follows: For a module $M$ let $M^-$ denote 
the smallest submodule of
$M$ such that $M/M^-\in \mathcal{N}_S$ and $M^+$ denote the largest 
submodule of $M$ such that $M^+\in \mathcal{N}_S$. Then we have
\begin{displaymath}
\mathrm{Hom}_{\mathcal{Q}_S}(M,N)=
\mathrm{Hom}_A((M^-+M^+)/M^+,(N^-+N^+)/N^+).
\end{displaymath}

For $S\subset \{1,2,\dots,n\}$ define $P_S:=\oplus_{i\in S}P_i$
and $B_S:=\mathrm{End}_A(P_S)^{\mathrm{op}}$. If $S$ is nonempty,
let $I_S$ denote the trace of the module $P_{S'}$ in ${}_AA$. 
Then $I_S$ is obviously an 
ideal in $A$, so we can define the quotient algebra $D_S:=A_S/I_S$.

\begin{proposition}\label{prop81}
For any $N\in \mathcal{N}_S$ we have $I_S N=0$, so such $N$ becomes
a $D_S$-module. This defines an equivalence $\mathcal{N}_S\cong 
D_S\text{-}\mathrm{mod}$.
\end{proposition}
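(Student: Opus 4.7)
The plan is to identify the ideal $I_S$ explicitly and then translate the condition ``$N\in\mathcal{N}_S$'' into the ideal condition ``$I_SN=0$''. Let $\{e_1,\dots,e_n\}$ be a complete set of primitive orthogonal idempotents with $P_i\cong Ae_i$. Since $\mathrm{Hom}_A(Ae_j,A)\cong e_jA$ via right multiplication, the trace of $P_{S'}=\bigoplus_{j\in S'}Ae_j$ in ${}_AA$ is the two-sided ideal
\begin{displaymath}
I_S=\sum_{j\in S'}Ae_jA.
\end{displaymath}

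Next I would exploit the exact functor $M\mapsto e_jM$. Since $A$ is basic, $e_jL_j\cong\Bbbk$ and $e_jL_i=0$ for $i\neq j$, so exactness yields $\dim_{\Bbbk}e_jN=[N:L_j]$ for every $N\in A\text{-}\mathrm{mod}$. For $N\in\mathcal{N}_S$ and $j\in S'$ this forces $e_jN=0$, whence
\begin{displaymath}
I_SN=\sum_{j\in S'}Ae_jAN\subseteq\sum_{j\in S'}Ae_jN=0,
\end{displaymath}
proving the first assertion and allowing the $A$-action on $N$ to descend to a $D_S$-action, which defines a functor $\Psi:\mathcal{N}_S\to D_S\text{-}\mathrm{mod}$.

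To show $\Psi$ is an equivalence, I would identify both $\mathcal{N}_S$ and $D_S\text{-}\mathrm{mod}$ with the same full subcategory $\mathcal{A}$ of $A\text{-}\mathrm{mod}$, namely the subcategory of modules on which $I_S$ acts as zero. Pullback along the surjection $A\tto D_S$ identifies $D_S\text{-}\mathrm{mod}$ with $\mathcal{A}$ fully faithfully. The containment $\mathcal{N}_S\subseteq\mathcal{A}$ was established in the previous step; for the reverse, take $N\in\mathcal{A}$ and note that $e_j\in I_S$ for each $j\in S'$ forces $e_jN=0$ and hence $[N:L_j]=0$ by the dimension formula, so $N\in\mathcal{N}_S$. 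The only mildly delicate point in the whole argument is the dimension identity $\dim_{\Bbbk}e_jN=[N:L_j]$, which drives both directions; everything else is routine bookkeeping with generators of $I_S$ versus composition factors of $N$.
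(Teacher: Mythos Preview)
Your proof is correct and follows essentially the same approach as the paper: both arguments identify $\mathcal{N}_S$ with the full subcategory of $A\text{-}\mathrm{mod}$ consisting of modules annihilated by $I_S$, which is in turn identified with $D_S\text{-}\mathrm{mod}$ via pullback along $A\tto D_S$. The only cosmetic difference is that the paper phrases the key step as ``$\mathrm{Hom}_A(P_{S'},N)=0$'' while you unpack this as ``$e_jN=0$ for $j\in S'$'' via the isomorphism $\mathrm{Hom}_A(Ae_j,N)\cong e_jN$ and the composition-multiplicity formula; your version is slightly more explicit but the underlying idea is identical.
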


\begin{proof}
The quotient map $A\tto D_S$ defines a full and faithful embedding of
$D_S\text{-}\mathrm{mod}$ into $A\text{-}\mathrm{mod}$ and the image
of this embedding consists exactly of $N\in A\text{-}\mathrm{mod}$ 
such that $I_SN=0$. 

If $N\in \mathcal{N}_S$, then $\mathrm{Hom}_A(P_{S'},N)=0$ by the
definition of $\mathcal{N}_S$, which implies $I_S N=0$. Conversely,
if $N\in A\text{-}\mathrm{mod}$ is such that $N\neq 0$, $I_S N=0$, then 
$\mathrm{Hom}_A(P_{S'},N)=0$ and hence every composition subquotient
of $N$ is isomorphic to some $L_i$, $i\in S$. This means that
$\mathcal{N}_S$ coincides with the image of $D_S\text{-}\mathrm{mod}$ 
in $A\text{-}\mathrm{mod}$ and the claim follows.
\end{proof}

\begin{proposition}\label{prop82}
Let $S\subsetneq \{1,2,\dots,n\}$. Then we have equivalences
$\mathcal{Q}_S\cong \mathcal{X}_{P_{S'}}\cong B_{S'}\text{-}\mathrm{mod}$.
\end{proposition}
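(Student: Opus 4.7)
The second equivalence $\mathcal{X}_{P_{S'}}\cong B_{S'}\text{-}\mathrm{mod}$ is exactly Auslander's equivalence recalled at the start of Section~\ref{s7} applied to $X=P_{S'}$, so the real content of the proposition is the first equivalence $\mathcal{Q}_S\cong\mathcal{X}_{P_{S'}}$. My plan is to show that the composition
\[
F:=\pi\circ\iota\colon \mathcal{X}_{P_{S'}}\hookrightarrow A\text{-}\mathrm{mod}\xrightarrow{\pi}\mathcal{Q}_S
\]
is an equivalence of categories, where $\pi$ is the canonical Serre quotient functor and $\iota$ the inclusion.

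For full faithfulness I would use the explicit $\mathrm{Hom}$-formula for $\mathcal{Q}_S$ recalled just before the proposition. Since any $M\in\mathcal{X}_{P_{S'}}$ is a cokernel of a morphism in $\mathrm{add}(P_{S'})$, the top of $M$ lies in $\{L_i:i\in S'\}$. This yields two key observations: (a) $M^-=M$, because any nonzero quotient of $M$ inherits a nonzero top in $\{L_i:i\in S'\}$ and therefore cannot lie in $\mathcal{N}_S$; and (b) every morphism from $M$ into a module in $\mathcal{N}_S$ is zero, since its image would have its top simultaneously in $\{L_i:i\in S'\}$ and in $\{L_j:j\in S\}$. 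Applying (b) to the image in $N^+$ gives injectivity of $\mathrm{Hom}_A(M,N)\to\mathrm{Hom}_{\mathcal{Q}_S}(M,N)$. For surjectivity, given $g\colon M/M^+\to N/N^+$, I would compose with $M\tto M/M^+$ to get $h\colon M\to N/N^+$, lift $h$ along the two-step resolution $X_1\to X_0\tto M$ using the projectivity of $X_0$ to obtain $\tilde h\colon X_0\to N$, and observe that the composite $X_1\to X_0\to N$ lands in $N^+$ and hence vanishes by (b) applied to $X_1\in\mathrm{add}(P_{S'})$; so $\tilde h$ descends to $f\colon M\to N$ with $\bar f=g$.

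For essential surjectivity I would use the exact functor $\Phi=\mathrm{Hom}_A(P_{S'},-)\colon A\text{-}\mathrm{mod}\to B_{S'}\text{-}\mathrm{mod}$, which by (b) annihilates $\mathcal{N}_S$ and so descends to $\bar\Phi\colon\mathcal{Q}_S\to B_{S'}\text{-}\mathrm{mod}$; Auslander's theorem says precisely that $\bar\Phi\circ F$ is an equivalence. Given $X\in\mathcal{Q}_S$ represented by $M\in A\text{-}\mathrm{mod}$, let $M''\in\mathcal{X}_{P_{S'}}$ be the Auslander preimage of $V=\Phi(M)$, so $\Phi(M'')\cong V$. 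The counit of the adjoint pair $(\Psi,\Phi)$, with $\Psi=P_{S'}\otimes_{B_{S'}}-$, then produces a canonical map $M''\cong\Psi\Phi(M)\to M$; if its kernel and cokernel lie in $\mathcal{N}_S$, then $\pi(M'')\cong\pi(M)=X$ and essential surjectivity follows.

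The main obstacle is this last verification. The cokernel of the counit is $M/\mathrm{tr}_{P_{S'}}(M)$, which a priori can still contain simples from $\{L_i:i\in S'\}$ deeper down, so a single application of the trace is not enough. The fix is to iterate: the ascending chain of iterated traces $T_1\subseteq T_2\subseteq\cdots$ of $P_{S'}$ in $M$ stabilizes at some $T_\infty$ with $\mathrm{Hom}_A(P_{S'},M/T_\infty)=0$, from which one deduces (by (b) applied in $A\text{-}\mathrm{mod}/\mathcal{N}_S$) that $M/T_\infty\in\mathcal{N}_S$; one then argues that the image of the counit coincides with $T_\infty$ modulo $\mathcal{N}_S$-terms, and treats the kernel dually using that $\Psi\Phi(M)\in\mathcal{X}_{P_{S'}}$ has top in $\{L_i:i\in S'\}$. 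This iterated-trace bookkeeping is where the genuine technical work lies.
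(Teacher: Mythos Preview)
Your overall strategy matches the paper's: set $F=\pi\circ\iota$ and prove it is fully faithful and essentially surjective. Your full-faithfulness argument is the paper's argument unpacked; the paper packages your lifting step as the $\mathrm{Ext}^1$-vanishing of Lemma~\ref{lem84} (for $M\in\mathcal{X}_{P_{S'}}$ and $Z\in\mathcal{N}_S$ one has $\mathrm{Ext}^1_A(M,Z)=0$, whence $\mathrm{Hom}_A(M,N)\cong\mathrm{Hom}_A(M,N/N')$ from the long exact sequence), but the content is identical.

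The ``main obstacle'' you flag for essential surjectivity, however, is not there. The cokernel $M/\mathrm{tr}_{P_{S'}}(M)$ \emph{always} lies in $\mathcal{N}_S$, with no iteration needed: for $i\in S'$ projectivity of $P_i$ gives
\[
[M/\mathrm{tr}_{P_{S'}}(M):L_i]=\dim\mathrm{Hom}_A\bigl(P_i,\,M/\mathrm{tr}_{P_{S'}}(M)\bigr)=0,
\]
since any map $P_i\to M/\mathrm{tr}_{P_{S'}}(M)$ lifts to $M$ and hence lands in the trace. Your chain $T_1\subseteq T_2\subseteq\cdots$ thus stabilises at $T_1$. (You are conflating ``composition factor'' with ``factor of the top''; for projective $P_i$ these give the same $\mathrm{Hom}$-count.) Likewise, applying the exact functor $\Phi=\mathrm{Hom}_A(P_{S'},-)$ to the counit shows at once that its kernel lies in $\mathcal{N}_S$, because $\Phi$ of the counit is an isomorphism. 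So your counit argument finishes in one step. The paper carries out essential surjectivity by hand via the \emph{partial coapproximation}: given $N$, take the trace $N'\subseteq N$, a cover $X_0\twoheadrightarrow N'$ with $X_0\in\mathrm{add}(P_{S'})$ and kernel $Q$, and set $M=X_0/\mathrm{tr}_{P_{S'}}(Q)$; this is a concrete model of your $\Psi\Phi(N)$, and the same projectivity observation is what shows $Q/\mathrm{tr}_{P_{S'}}(Q)\in\mathcal{N}_S$.
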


\begin{proof}
That $\mathcal{X}_{P_{S'}}$ is equivalent to $B_{S'}\text{-}\mathrm{mod}$
follows from \cite[Chapter~II]{Ba}
(see also \cite[\S~5]{Au}). Let us show that the embedding of
$\mathcal{X}_{P_{S'}}$ to $A\text{-}\mathrm{mod}$ induces an equivalence
$\mathcal{Q}_S\cong \mathcal{X}_{P_{S'}}$ via the canonical quotient map 
$A\text{-}\mathrm{mod}\tto \mathcal{Q}_S$. Let 
$\Psi:\mathcal{X}_{P_{S'}}\hookrightarrow A\text{-}\mathrm{mod}
\tto \mathcal{Q}_S$ denote the corresponding  functor.
 
If $M\in \mathcal{X}_{P_{S'}}$ and $M'\subset M$, then $M/M'$ is a 
quotient of some module from $\mathrm{add}(P_{S'})$. Hence
$M/M'\not \in \mathcal{N}_S$ unless $M/M'=0$. 

\begin{lemma}\label{lem84}
For $M\in \mathcal{X}_{P_{S'}}$ we have 
$\mathrm{Ext}_A^1(M,Z)=0$ for any $Z\in \mathcal{N}_S$.
\end{lemma}

\begin{proof}
Let $X_1\to X_0\tto M$ be the first two steps of the projective 
resolution of $M$, given by \eqref{eq071}. Then 
$\mathrm{Hom}_A(X_1,Z)=0$ (as the head of $X_1$ contains only
$L_j$, $j\in S'$, while all composition subquotients of $Z$ are
of the form $L_i$, $i\in S$) and the claim follows.
\end{proof}

If $M\in \mathcal{X}_{P_{S'}}$
and $N'\subset N$ is such that $N'\in \mathcal{N}_S$, then 
$\mathrm{Hom}_A(M,N')=0$ and $\mathrm{Ext}_A^1(M,N')=0$
(the latter by Lemma~\ref{lem84}). Therefore 
$\mathrm{Hom}_A(M,N)\cong \mathrm{Hom}_A(M,N/N')$.
Combining this with the paragraph before Lemma~\ref{lem84} we 
have $\mathrm{Hom}_A(M,N)=\mathrm{Hom}_A(M',N/N')$ in the
case $M,N\in \mathcal{X}_{P_{S'}}$. This yields 
\begin{displaymath}
\mathrm{Hom}_{\mathcal{Q}_S}(M,N)=
\mathrm{Hom}_A(M,N)\quad \text{ for all }\quad 
M,N\in \mathcal{X}_{P_{S'}}.
\end{displaymath}
This means that the functor $\Psi$ is full and faithfull. It is
left to prove that $\Psi$ is dense.

Let $N$ be an $A$-module and $N'$ be the trace of $P_{S'}$ in $N$.
Take a projective cover $X_0\tto N'$, where $X_0\in\mathrm{add}(P_{S'})$,
let $Q$ be the kernel of this epimorphism and $Q'$ be the trace of 
$P_{S'}$ in $Q$. Define $M=X_0/Q'$ and $M'=Q/Q'\subset M$. Then
$M',N/N'\in \mathcal{N}_S$ and $M/M'\cong X_0/Q\cong N'$ by definition. 
Let $\varphi:M\to N$ be the composition of the natural maps 
$M\to N'\hookrightarrow N$. Let $\psi:N'\to M/M'$ be the inverse
of the natural isomorphism $M/M'\overset{\sim}{\rightarrow} N'$. Then both 
$\varphi\in \mathrm{Hom}_{\mathcal{Q}_S}(M,N)$ and
$\psi\in \mathrm{Hom}_{\mathcal{Q}_S}(N,M)$ and it is straightforward to 
check that $\varphi$ and $\psi$ are mutually inverse isomorphisms.
This means that $N$ is isomorphic in $\mathcal{Q}_S$ to 
$M\in \mathcal{X}_{P_{S'}}$ and hence the functor $\Psi$ is dense.
This completes the proof.
\end{proof}

Proposition~\ref{prop82} can be deduced from the results described
in \cite[Chapter~15]{Fa}. However, it is shorter to prove it in the
above form than to introduce all the notions and notation necessary 
for application of  \cite[Chapter~15]{Fa}.
The correspondence $N\mapsto M$ from the last paragraph of the 
proof of Proposition~\ref{prop82} is functorial. The module 
$M$ is called the {\em partial coapproximation} of $N$ with respect to
$\mathcal{X}_{P_{S'}}$, see \cite[2.5]{KM} for details.
From Proposition~\ref{prop82} it follows that 
$\mathcal{S}$-subcategories of the BGG category $\mathcal{O}$ associated 
with parabolic $\mathfrak{sl}_2$-induction (see \cite{FKM1,FKM2}) can 
been regarded as quotients of blocks of the usual category 
$\mathcal{O}$ modulo the corresponding parabolic subcategory (in the 
sense of \cite{RC}). In the general case $\mathcal{S}$-subcategories 
of $\mathcal{O}$ are quotient categories as well (however, modulo a
subcategory, which properly contains the corresponding parabolic 
subcategory). In fact, the latter can be deduced combining
several known results from the literature (\cite{BG}, \cite[Kapitel~6]{Ja} 
and \cite{KoM}). 

\begin{corollary}\label{cor83}
Let $\mathrm{F}$ be a selfadjoint endofunctor on
$A\text{-}\mathrm{mod}$ and $S\subsetneq \{1,2,\dots,n\}$
be such that the linear span of $[L_i]$, $i\in S$, in invariant
under $[\mathrm{F}]$.
\begin{enumerate}[(i)]
\item\label{cor83.1} The functor $\mathrm{F}$ preserves the category
$\mathcal{N}_S$ and hence induces, via restriction and the equivalence from
Proposition~\ref{prop81}, a selfadjoint en\-do\-func\-tor $\hat{\mathrm{F}}$ 
on $D_S\text{-}\mathrm{mod}$.
\item\label{cor83.2} The functor $\mathrm{F}$ preserves the category
$\mathcal{X}_{P_{S'}}$ and hence induces, via restriction and the
equivalence from Proposition~\ref{prop82}, a selfadjoint en\-do\-func\-tor 
$\overline{\mathrm{F}}$  on $B_{S'}\text{-}\mathrm{mod}$.
\item\label{cor83.3} If $g(x),h(x)\in\mathbb{Z}_+[x]$ and
$g(\mathrm{F})\cong h(\mathrm{F})$, then 
$g(\hat{\mathrm{F}})\cong h(\hat{\mathrm{F}})$ and
$g(\overline{\mathrm{F}})\cong h(\overline{\mathrm{F}})$.
\end{enumerate}
\end{corollary}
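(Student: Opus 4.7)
The plan splits along the three claims, and the starting observation is to translate the hypothesis that the span of $[L_i]$, $i\in S$, is $[\mathrm{F}]$-invariant into information about \emph{both} simples and projectives using the selfadjointness of $\mathrm{F}$.

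For claim \eqref{cor83.1}, I would first note that the hypothesis says that for every $i\in S$ the composition factors of $\mathrm{F}L_i$ lie in $\{L_j:j\in S\}$, so $\mathrm{F}L_i\in\mathcal{N}_S$. Since $\mathrm{F}$ is exact and every $N\in\mathcal{N}_S$ admits a finite composition series with subquotients among the $L_j$, $j\in S$, induction on length yields $\mathrm{F}N\in\mathcal{N}_S$. Hence the restriction $\mathrm{F}|_{\mathcal{N}_S}$ is a well-defined exact endofunctor, and, via Proposition~\ref{prop81}, it corresponds to an exact endofunctor $\hat{\mathrm{F}}$ on $D_S\text{-}\mathrm{mod}$. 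Selfadjointness is immediate: the natural adjunction isomorphism $\mathrm{Hom}_A(\mathrm{F}M,N)\cong\mathrm{Hom}_A(M,\mathrm{F}N)$ restricts to $M,N\in\mathcal{N}_S$ and transports across $\Phi$ to give selfadjointness of $\hat{\mathrm{F}}$.

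For claim \eqref{cor83.2}, the key is to turn the assumption on simples into a statement about projectives, and this is exactly where selfadjointness is essential. Writing $M_{\mathrm{F}}=(\mathtt{y}_{ij})$ and $N_{\mathrm{F}}=(\mathtt{x}_{ij})$, the hypothesis says $\mathtt{y}_{ji}=0$ whenever $j\in S'$ and $i\in S$. By Lemma~\ref{lem9} we have $\mathtt{x}_{ij}=\mathtt{y}_{ji}$, so $\mathtt{x}_{ij}=0$ for $i\in S$, $j\in S'$. This means $\mathrm{F}P_j\in\mathrm{add}(P_{S'})$ for every $j\in S'$, and consequently $\mathrm{F}P_{S'}\in\mathrm{add}(P_{S'})$. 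Now Proposition~\ref{prop71}\eqref{prop71.1}, applied with $X=P_{S'}$, gives that $\mathrm{F}$ preserves $\mathcal{X}_{P_{S'}}$ and induces a selfadjoint endofunctor $\overline{\mathrm{F}}$ on $B_{S'}\text{-}\mathrm{mod}$ via Proposition~\ref{prop82}.

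Claim \eqref{cor83.3} is then a routine consequence of functoriality of restriction. Any isomorphism of functors $g(\mathrm{F})\cong h(\mathrm{F})$ consists of component isomorphisms $g(\mathrm{F})M\overset{\sim}{\to} h(\mathrm{F})M$ natural in $M$, and restricting to $M\in\mathcal{N}_S$ (respectively $M\in\mathcal{X}_{P_{S'}}$) produces an isomorphism $g(\hat{\mathrm{F}})\cong h(\hat{\mathrm{F}})$ (respectively $g(\overline{\mathrm{F}})\cong h(\overline{\mathrm{F}})$, invoking Proposition~\ref{prop71}\eqref{prop71.2} for the latter).

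The only real obstacle is the translation step in \eqref{cor83.2}: the hypothesis is phrased purely in terms of simples, but Proposition~\ref{prop71} requires control over $\mathrm{F}P_{S'}$. Selfadjointness, via the transpose identity of Lemma~\ref{lem9}, bridges precisely this gap; without it, a subspace of $[A]$ spanned by a set of simples being $[\mathrm{F}]$-invariant would not in general translate into the analogous invariance on projectives, and the argument would break down.
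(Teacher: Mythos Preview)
Your proposal is correct and follows essentially the same route as the paper. The only cosmetic difference is in claim \eqref{cor83.2}: where you invoke Lemma~\ref{lem9} to pass from the invariance hypothesis on simples to $\mathrm{F}P_{S'}\in\mathrm{add}(P_{S'})$, the paper does the adjunction computation directly, writing $\mathrm{Hom}_A(\mathrm{F}P_{S'},L_i)\cong\mathrm{Hom}_A(P_{S'},\mathrm{F}L_i)=0$ for $i\in S$; but since Lemma~\ref{lem9} is itself just this adjunction identity packaged in matrix form, the two arguments are the same in substance.
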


\begin{proof}
The functor  $\mathrm{F}$ preserves the category
$\mathcal{N}_S$ by our assumptions and claim \eqref{cor83.1}
follows. 

For $i\in S$ we have
\begin{displaymath}
\mathrm{Hom}_A(\mathrm{F}P_{S'},L_i)\cong 
\mathrm{Hom}_A(P_{S'},\mathrm{F}L_i)=0
\end{displaymath}
as $\mathrm{F}L_i\in \mathcal{N}_S$ by claim \eqref{cor83.1}.
This means that $\mathrm{F}P_{S'}\in\mathrm{add}(P_{S'})$ and
claim \eqref{cor83.2} follows from Proposition~\ref{prop71}.

Any isomorphism $g(\mathrm{F})\cong h(\mathrm{F})$ induces, by
restriction to $\mathcal{N}_S$ and $\mathcal{X}_{P_{S'}}$,
isomorphisms $g(\hat{\mathrm{F}})\cong h(\hat{\mathrm{F}})$ and
$g(\overline{\mathrm{F}})\cong h(\overline{\mathrm{F}})$, respectively.
This proves claim \eqref{cor83.3} and completes the proof.
\end{proof}

From the proof of Corollary~\ref{cor83} it follows that in the case when
a selfadjoint endofunctor $\mathrm{F}$ on $A\text{-}\mathrm{mod}$
preserves the category $\mathrm{add}(P_{S})$ for some nonempty
$S\subset \{1,2,\dots,n\}$, then $\mathrm{F}$ preserves the category
$\mathcal{N}_{S'}$ as well.

\vspace{0.5cm}

\noindent
T.A.: Department of Mathematics, {\AA}rhus University, DK- 8000,
{\AA}rhus C, DENMARK, e-mail: {\tt agerholm\symbol{64}imf.au.dk}
\vspace{0.2cm}

\noindent
V.M.: Department of Mathematics, Uppsala University, SE 471 06,
Uppsala, SWEDEN, e-mail: {\tt mazor\symbol{64}math.uu.se}

\end{document}